\newcommand{\Addresses}{{
  \bigskip\bigskip \small

  Bjoern Bringmann, \textsc{University of California, Los Angeles, Department of Mathematics, 520 Portola Plaza, Los Angeles, CA 90095}\\\nopagebreak
  Email address: \texttt{bringmann@math.ucla.edu}

}}
\title{Almost sure local well-posedness for a derivative nonlinear wave equation}
\author{Bjoern Bringmann}
\begin{document}
\pagenumbering{arabic}
\maketitle

\let\thefootnote\relax\footnotetext{\emph{MSC2010}: 35L05, 35L15, 35L71.}
\let\thefootnote\relax\footnotetext{\emph{Keywords}: nonlinear wave equations, probabilistic well-posedness, quadratic nonlinearity, paraproducts}
\begin{abstract} \noindent
We study the derivative nonlinear wave equation \( - \partial_{tt} u + \Delta u = |\nabla u|^2 \) on \( \mathbb{R}^{1+3} \). The deterministic theory is determined by the Lorentz-critical regularity \( s_L = 2 \), and both local well-posedness above \( s_L \) as well as ill-posedness below \( s_L \) are known. In this paper, we show the local existence of solutions for randomized initial data at the super-critical regularities \( s\geq 1.984\). In comparison to the previous literature in random dispersive equations, the main difficulty is the absence of a (probabilistic) nonlinear smoothing effect. To overcome this, we introduce an adaptive and iterative decomposition of approximate solutions into rough and smooth components. In addition, our argument relies on refined Strichartz estimates, a paraproduct decomposition, and the truncation method of de Bouard and Debussche. 
\end{abstract}

\section{Introduction}
We consider the Cauchy problem for the nonlinear wave equation 
\begin{equation}\label{intro:eq_nlw}
\begin{cases}
-\partial_{tt} u + \Delta u = |\nabla u|^2,  \qquad\qquad \text{for} ~ (t,x)\in \mathbb{R}^{1+d} \\
u|_{t=0}=f_0 , ~ \partial_t u|_{t=0}=f_1 
\end{cases}~,
\end{equation}
with initial data \( (f_0,f_1)\in H_x^{s}(\mathbb{R}^d)\times H_x^{s-1}(\mathbb{R}^d)\) and dimension \( d \geq 2 \). The choice of the nonlinearity \( |\nabla u|^2 \) is mainly for simplicity, and the methods of this paper also apply to a general quadratic derivative nonlinearity. In particular, using the sign change \( u \mapsto -u \), one can convert \( |\nabla u|^2 \) into \( -|\nabla u|^2 \).\\
 The deterministic theory of \eqref{intro:eq_nlw} is by now well-understood. 
Due to the scaling symmetry of the equation, one expects local well-posedness in \( H_x^{s}(\mathbb{R}^d)\times H_x^{s-1}(\mathbb{R}^d)\) only for \( s \geq d/2\). Using Lorentz-transformations (cf. \cite{SS98,Tao06}) one obtains a second obstruction to local well-posedness, and the Lorentz-critical regularity is given by \( (d+5)/4 \). The local well-posedness of \eqref{intro:eq_nlw} in Sobolev spaces for 
\begin{equation*}
s> s_d:=\max \left( \frac{d}{2}, \frac{d+5}{4} \right)
\end{equation*}
was proven by Ponce-Sideris \cite{PS93}, Zhou \cite{Zhou03}, and Tataru \cite{Tataru99}. In contrast, the ill-posedness for \( s \leq s_d \) was proven by Lindblad \cite{Lindblad93, Lindblad96} for certain derivative nonlinear wave equations. In particular, a minor modification of the example on \cite[p. 511]{Lindblad93} applies to \eqref{intro:eq_nlw} in dimension \( d=3 \). We remark that the gap between the scaling-critical regularity and the well-posedness theory can be closed in Fourier-Lebesque spaces, see \cite{GN14,GT13,Grunrock11}. \\
The purpose of this paper is to understand whether the ill-posedness in low regularity spaces is witnessed by generic or only exceptional sets of initial data. This leads us to consider the Cauchy problem \eqref{intro:eq_nlw} for random initial data \( (f_0^\omega, f_1^\omega) \in H^s(\mathbb{R}^d)\times H^{s-1}(\mathbb{R}^d) \). After pioneering work by Bourgain \cite{Bourgain94,Bourgain96} on nonlinear Schrödinger equations and more recent work by Burq-Tzvetkov \cite{BT08I,BT08II} on nonlinear wave equations,  the study of dispersive PDE with random initial data has seen an enormous growth of interest. We mention only some additional references in the context of nonlinear wave equations \cite{BB14,Bringmann18,BT14,S11,DLM17,DLM18,LM13,LM16,OP16,Pocovnicu17}. We also refer the reader to the survey paper \cite{BOP18} for a summary and further relevant references.
In this paper, we construct the random initial data using the Wiener randomization \cite{BOP15,LM13}. For this, let \( f \in L^2(\mathbb{R}^d) \) be arbitrary but fixed. Let \( \varphi\colon \mathbb{R}^d \rightarrow [0,1] \) be a smooth and compactly supported function such that the translates \( \{ \varphi(\cdot -k): k \in \mathbb{Z}^d \} \) form a partition of unity. Then, the Wiener decomposition of \( f \) is given in frequency space by
\begin{equation}\label{intro:eq_decomp}
\widehat{f}(\xi) = \sum_{k\in \mathbb{Z}^d} \varphi(\xi-k) \widehat{f}(\xi)~. 
\end{equation}
The Wiener randomization is now defined by randomizing the coefficients in \eqref{intro:eq_decomp}.  Let \( (\Omega, \mathscr{F}, \mathbb{P} ) \) be a probability space and let 
\( \{ g_k(\omega) \colon k \in \mathbb{Z}^d \} \) be a family of independent standard complex Gaussians. Then, we define
\begin{equation}\label{intro:eq_randomization}
\widehat{f^\omega}(\xi) = \sum_{k\in \mathbb{Z}^d} g_k(\omega) \varphi(\xi-k) \widehat{f}(\xi)~. 
\end{equation}
Thus, \( f^\omega \) is a random linear combination of functions that are frequency localized on cubes of scale \( \sim 1 \). The Gaussians may also be replaced by any family of independent uniformly sub-Gaussian random variables. Furthermore, if \( \varphi(\xi) = \varphi(-\xi) \) and \( f \) is real-valued, one can condition on the event that \( g_k = \overline{g_{-k}} \) for all \( k \in \mathbb{Z}^d \) to obtain real-valued functions \( f^\omega \).  \\

The first probabilistic result on wave equations with a quadratic derivative nonlinearity was recently obtained in \cite{CCMNS18}. The authors proved the following theorem. 

\begin{thm}[{\cite[Theorem 2.1 and Remark 2.3]{CCMNS18}}]\label{thm:staffilani}
Let \( (f_0 , f_1) \in H^1(\mathbb{R}^2)\times L^2(\mathbb{R}^2) \) and let \( (f_0^\omega,f_1^\omega) \) be as in \eqref{intro:eq_randomization}. Let \( F^\omega(t) = W(t)(f_0^\omega,f_1^\omega) \) be the solution to the linear wave equation with initial data \( (f_0^\omega,f_1^\omega) \). Furthermore, let \( u^{(j)} \) be the \( j\)-th Picard iterate, which is given by
\begin{align*}
u^{(0)}(t) &:= F^\omega(t)~, \\
u^{(j)} (t) &:= F^\omega(t) + \duh |\nabla u^{(j-1)}|^2 \dtp \qquad \forall j\geq 1~. 
\end{align*}
For any sufficiently small \( T>0 \), we have for almost every \( \omega \in \Omega \) that
\begin{equation*}
(u^{(j)}, \partial_t u^{(j)} ) \in \big( C_t^0 \dot{H}^1_x \times C_t^0 L^2_x\big)([0,T]\times \mathbb{R}^2) \qquad \forall j\geq 1~. 
\end{equation*}
\end{thm}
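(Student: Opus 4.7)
The statement can be proven by induction on $j$. The base case $j=0$ follows from the fact that the free wave evolution preserves the energy space $\dot H^1 \times L^2$ and that almost surely $(f_0^\omega, f_1^\omega)$ lies in this space with finite norm.

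For the inductive step, decompose $u^{(j)} = F^\omega + v^{(j)}$, where $v^{(j)}$ is the Duhamel integral of $|\nabla u^{(j-1)}|^2$. The energy estimate for the 3D wave equation gives
\[
\|v^{(j)}\|_{L^\infty_t \dot H^1_x([0,T])} + \|\partial_t v^{(j)}\|_{L^\infty_t L^2_x([0,T])} \lesssim \big\| |\nabla u^{(j-1)}|^2 \big\|_{L^1_t L^2_x} = \|\nabla u^{(j-1)}\|_{L^2_t L^4_x}^2,
\]
so it suffices to show that $\nabla u^{(j-1)} \in L^2_t L^4_x$ almost surely. I would therefore strengthen the inductive hypothesis to include this Strichartz-type bound on $\nabla u^{(j)}$.

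The key probabilistic input is that the randomized free evolution obeys $\nabla F^\omega \in L^2_t L^4_x([0,T]\times\mathbb{R}^3)$ almost surely, even though the corresponding deterministic estimate fails (the pair $(2,4)$ lies outside the wave-admissible range at regularity one). This follows from Khintchine's inequality applied to the Gaussian series $F^\omega = \sum_k g_k(\omega) F_k$, where $F_k$ is the free evolution of the $k$-th Wiener piece and is thus frequency-localized on a unit cube, combined with unit-scale Bernstein and the usual wave Strichartz estimates. To propagate the bound to $\nabla u^{(j)}$, one expands $\nabla u^{(j-1)} = \nabla F^\omega + \nabla v^{(j-1)}$ and estimates the three quadratic terms inside a suitable inhomogeneous Strichartz or energy-type bound for the wave equation: the purely random term $|\nabla F^\omega|^2$ is handled by the probabilistic input, the purely deterministic term $|\nabla v^{(j-1)}|^2$ by the inductive Strichartz hypothesis combined with a deterministic product estimate, and the cross term by Hölder combined with the probabilistic $L^p_x$-integrability of $\nabla F^\omega$.

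The main obstacle is the mixed term $\nabla F^\omega \cdot \nabla v^{(j-1)}$: deterministically $\nabla F^\omega$ has no regularity to spare, while $v^{(j-1)}$ lies only in $\dot H^1$. Closing this estimate requires trading the probabilistic $L^p_x$-integrability of $\nabla F^\omega$, which by Khintchine is available with constant of order $\sqrt{p}$ for every finite $p$, against Strichartz bounds on $\nabla v^{(j-1)}$, typically after a Littlewood--Paley or paraproduct decomposition to isolate the frequency interactions. As emphasized in the abstract of the present paper, such an argument becomes delicate below the $\dot H^1$ regularity considered here, because the Duhamel term $v^{(j-1)}$ can no longer be assumed to enjoy a smoothing effect over $F^\omega$ in the iteration.
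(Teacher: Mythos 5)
First, a point of comparison: the paper does not prove Theorem \ref{thm:staffilani} at all; it is quoted from \cite{CCMNS18} as background, so there is no internal proof to measure your argument against. Judged on its own terms, your proposal has a genuine gap in the inductive step. The base case, the reduction of the energy bound on $v^{(j)}$ to $\nabla u^{(j-1)}\in L^2_tL^4_x$, and the probabilistic input $\nabla F^\omega\in L^2_tL^4_x$ almost surely (unit-scale Bernstein plus Khintchine--Minkowski) are all fine. The problem is that the strengthened hypothesis does not propagate: you must also show $\nabla v^{(j)}\in L^2_tL^4_x$, and none of the tools you name produce this. With zero data and forcing only in $L^1_tL^2_x$, the gap condition $\tfrac1q+\tfrac3p=\tfrac32$ for $\nabla v$ together with wave admissibility $\tfrac1q+\tfrac1p\le\tfrac12$ forces $(q,p)=(\infty,2)$; the energy estimate is the \emph{only} Strichartz bound available for $\nabla v^{(j)}$ at this regularity. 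To reach $L^2_tL^4_x$, say via the admissible pair $(4,4)$ and H\"older in time, one must gain at least half a derivative on the forcing, i.e.\ show $|\nabla u^{(j-1)}|^2\in L^1_t\dot H^{1/2}_x$. This already fails for $j=1$: as the introduction of the present paper explains, the low--high interaction $\nabla P_1F^\omega\cdot\nabla P_{\gg1}F^\omega$ places more than one derivative on the high-frequency piece of $F^\omega$, the randomization gains integrability but never derivatives, and by putting the data on two unit cubes one sees that $v^{(1)}$ is in general no smoother than the data. Your induction therefore stalls at $j=2$, where $\nabla v^{(1)}\in L^2_tL^4_x$ is needed.

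Your closing paragraph correctly senses that something is delicate, but the crux is mislocated: the obstruction is not estimating the cross term $\nabla F^\omega\cdot\nabla v^{(j-1)}$ in $L^1_tL^2_x$ (that is manageable by H\"older, using $\nabla F^\omega\in L^2_tL^p_x$ for large $p$ and interpolating $\nabla v^{(j-1)}$ between $L^\infty_tL^2_x$ and the hypothesized Strichartz norm), but re-establishing the $L^2_tL^4_x$ bound for the new Duhamel term, where already the purely random source $|\nabla F^\omega|^2$ is fatal because of its low--high part. Any correct proof must exploit more structure: either the explicit multilinear expansion of $u^{(j)}$ in the Gaussians with bounds that degenerate in $j$ (which is the route of \cite{CCMNS18}, and is why their estimates are not uniform in $j$ and do not yield a solution), or a decomposition recording that the rough part of $\nabla v^{(j)}$ has the paracontrolled form (low-frequency function) $\times$ (random free wave), so that its $L^2_tL^4_x$ norm is again read off a probabilistic Strichartz estimate rather than from Sobolev regularity of $v^{(j)}$. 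The latter is precisely the adapted linear evolution machinery of Sections 3--5 of the present paper.
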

\begin{rem}
In fact, the theorem in \cite{CCMNS18} is slightly more general, and holds for any dimension \( d=2,3,4 \) and any quadratic derivative nonlinearity. Furthermore, the randomization in \cite{CCMNS18} uses random signs instead of Gaussians.
\end{rem}
The randomness in Theorem \ref{thm:staffilani} is essential. For deterministic data, the statement of the theorem may even fail for the first iterate \( u^{(1)} \), see \cite{FK00,Zhou97}. The bounds in \cite{CCMNS18} on the size of \( u^{(j)} \), however,  are not uniform in \( j\geq 0\), and are not sufficient to conclude the existence of a solution.  In fact, proving the existence of solutions for random initial data is mentioned as an open problem on \cite[p.3]{CCMNS18}. \\
 The main theorem of this paper solves this problem (in three dimensions) for certain Lorentz super-critical regularities \( s < 2 = s_3  \).

\begin{thm}[Main theorem]\label{main_thm}
Assume that \( (f_0,f_1) \in H_x^s(\rthree)\times H_x^{s-1}(\rthree)\), where \( s \geq 1.984 \). In addition, let \( 0 < T_0 \ll 1 \) and \( \sigma=1.1 \). Then, there exists a random function \( u \) and random times \( 0 < T(\omega) \leq T_0 \) such that 
\begin{equation}\label{intro:eq_u_space}
\begin{aligned}
u&\in  \big( L_\omega^2 C_t^0 H_x^s \medcap L_\omega^2 L_t^2 W_{x}^{\sigma,\infty} \big)(\Omega \times [0,T_0] \times \rthree)~,\\
\partial_t u &\in\big( L_\omega^2 C_t^0 H_x^{s-1} \big)(\Omega \times [0,T_0] \times \rthree)~,
\end{aligned}
\end{equation}
and such that for almost every \( \omega \in \Omega \) it holds that 
\begin{equation}\label{intro:eq_u_duhamel}
u(t)= W(t) (f_0^\omega, f_1^\omega) + \duh |\nabla u(t^\prime)|^2 \dtp \qquad \forall t \in [0,T(\omega)]~. 
\end{equation}
\end{thm}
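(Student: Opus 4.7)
The natural reduction is the Da Prato--Debussche type Ansatz $u = F^\omega + v$ with $F^\omega(t) := W(t)(f_0^\omega,f_1^\omega)$, so that $v$ is the fixed point of
\[
v \longmapsto \int_0^t \frac{\sin((t-s)|\nabla|)}{|\nabla|}\, \bigl|\nabla F^\omega(s) + \nabla v(s)\bigr|^2 \, ds.
\]
Expanding the square produces a purely stochastic source $|\nabla F^\omega|^2$, a mixed bilinear source $2 \nabla F^\omega \cdot \nabla v$, and a deterministic $|\nabla v|^2$. The main difficulty, flagged in the abstract, is that the Wiener randomization yields no regularity gain for $|\nabla F^\omega|^2$ beyond that of $\nabla F^\omega$: a product of two unit-cube frequency localized Gaussian fields improves $L^p_x$ integrability but not Sobolev regularity. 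One therefore cannot hope to place $v$ in a strictly smoother Sobolev space than $F^\omega$, and the classical Bourgain scheme collapses. A more refined bookkeeping is needed.

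The plan is to run a Picard iteration in which, at each stage $j$, the approximation is split adaptively as $u^{(j)} = R^{(j)} + S^{(j)}$, where $R^{(j)}$ is a rough component---the linear random evolution together with those stochastic Duhamel iterates that exhibit no smoothing---while $S^{(j)}$ is a smoother remainder to be controlled in a strictly higher Sobolev norm. The decomposition is not fixed ahead of time but updated at each step: a term that first appears in $S^{(j)}$ but fails to gain enough regularity is reassigned to $R^{(j+1)}$ and treated probabilistically in the next stage. Inserting this splitting into the Duhamel formula and expanding $|\nabla u^{(j)}|^2 = |\nabla R^{(j)}|^2 + 2\, \nabla R^{(j)} \cdot \nabla S^{(j)} + |\nabla S^{(j)}|^2$ via a paraproduct decomposition into high-high, high-low, and low-high frequency interactions isolates the genuinely stochastic contributions (essentially the high-high pairings of two rough factors) from those that can be estimated deterministically once at least one factor is smooth.

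The probabilistic inputs on the rough contributions come from refined Strichartz estimates that exploit the unit-frequency-ball localization built into the Wiener randomization. The norm $L^2_t W^{\sigma,\infty}_x$ with $\sigma = 1.1$ appearing in \eqref{intro:eq_u_space} is precisely where such a refinement over the deterministic (and, in $d=3$, failing) endpoint Strichartz is realized on an event of large probability, and it is the natural target in which to close the bilinear estimates involving two rough factors. To convert these bounds---finite only on sets of large measure---into an almost-sure statement, I would implement the truncation method of de Bouard and Debussche: insert a smooth cutoff depending on a suitable majorizing stochastic norm of $F^\omega$ into the equation, solve the truncated equation deterministically in the function spaces of \eqref{intro:eq_u_space}, and define $T(\omega)$ as the first time the cutoff becomes active. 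Sending the cutoff threshold to infinity along a sequence then produces a full-measure event on which \eqref{intro:eq_u_duhamel} holds on $[0,T(\omega)]$.

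The hard part is balancing the adaptive rough-smooth decomposition against the refined Strichartz and paraproduct estimates so as to close the fixed point at a regularity $s$ strictly below $s_L = 2$. Each rough-smooth stage recoups a fraction of a derivative from the high-low paraproducts but pays some back through the iterated high-high stochastic products; the threshold $s \geq 1.984$ presumably marks the break-even point of this optimization, and any further improvement should come from a tighter interplay between the number of iterative stages and the multilinear probabilistic estimates.
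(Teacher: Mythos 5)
You have correctly identified several ingredients the paper uses (refined Klainerman--Tataru Strichartz estimates, a paraproduct decomposition, the de Bouard--Debussche truncation, and the absence of nonlinear smoothing), but the core mechanism of the proof is missing, and the substitute you propose would not close. First, you have misidentified the enemy: the genuinely dangerous interaction is not the high-high pairing of two rough factors (high-high interactions send two comparable frequencies to a lower output frequency and actually gain regularity), but the \emph{low-high} interaction \( \nabla P_{\mathrm{low}} F \cdot \nabla P_{\mathrm{high}} F \), which attacks the high-frequency linear evolution with more than \( s \) derivatives and reproduces the regularity of the data exactly. Second, your adaptive reassignment scheme --- moving a non-smoothing term from \( S^{(j)} \) to \( R^{(j+1)} \) at the next Picard stage and ``treating it probabilistically'' --- does not terminate: the low-high interaction regenerates at every order of the iteration, so the rough component never stabilizes into an object with a usable probabilistic structure (it is not a fixed finite Wiener chaos), and no uniform-in-\( j \) bound results; this is precisely the obstruction left open in \cite{CCMNS18}.

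The paper's resolution is structurally different. One inducts on the dyadic frequency scale \( N \) of the \emph{data}, not on Picard iterates: \( u_n \) solves the equation with data \( Q_{\leq N}(f_0^\omega,f_1^\omega) \), and the increment \( v_n = u_n - u_{n-1} \) is split as \( F_n^\omega + w_n \), where the rough part \( F_n^\omega \) is \emph{not} the free evolution but the solution of the variable-coefficient linear equation \( -\partial_{tt}F_n^\omega + \Delta F_n^\omega = 2\nabla P_{\leq N^\gamma} u_{n-1}\cdot \nabla F_n^\omega \). This absorbs the low-high interaction into the linear propagator once and for all, leaving in the equation for \( w_n \) only the term \( \nabla P_{>N^\gamma}u_{n-1}\cdot\nabla F_n^\omega \), which is essentially high-high and hence smoothing. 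Making this work requires two inputs absent from your outline: a Geba--Tataru type result showing the adapted propagator essentially preserves the frequency localization of \( Q_N f^\omega \) (so that the unit-cube decoupling of the Wiener randomization survives and probabilistic Strichartz estimates still hold for \( F_n^\omega \)), and the observation that \( F_{n,k} \) is measurable with respect to \( \sigma(g_l : \|l\|_2 < N/2) \), so that conditioning and Khintchine's inequality decouple the new Gaussians at scale \( N \). Without the adapted linear evolution and the frequency-scale induction that makes this conditional independence available, the fixed point cannot be closed below \( s = 2 \).
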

\begin{rem}
A minor modification of the arguments should lead to a similar result in dimension \( d=2 \). We expect the restriction \( s \geq 1.7281 \) in \( H^s_x(\mathbb{R}^2) \times H^{s-1}_x(\mathbb{R}^2) \), which lies below the Lorentz critical regularity \( s=1.75 \). In contrast, the extension to high-dimensions \( d \geq 4 \) may be more difficult, and likely involves \( X^{s,b} \)-type spaces \cite{Bourgain93}. The techniques of this paper may also apply to nonlinear wave equations with null-forms \cite{KM93}, but we have not pursued this direction yet. 
\end{rem}
In the following we sketch the main ideas behind the proof of Theorem \ref{main_thm}. We first describe why a common combination of Bourgain's trick \cite{Bourgain96}, which is related to the Da Prato-Debussche trick \cite{PD02}, and nonlinear smoothing estimates cannot be applied to \eqref{intro:eq_nlw}. As above, let \( F^\omega(t) \) be the solution to the linear wave equation with initial data \( (f^\omega_0,f^\omega_1) \). Then, we decompose the solution as \( u(t)= F^\omega(t)+w(t) \), and obtain the equation
\begin{equation}\label{intro:eq_nlw_nl}
\begin{cases}
-\partial_{tt} w + \Delta w =  |\nabla w|^2 + 2 \nabla w \cdot \nabla F^\omega + |\nabla F^\omega|^2 \qquad\qquad \text{for} ~ (t,x)\in \mathbb{R}^{1+d} \\
w|_{t=0}=0, ~ \partial_t w|_{t=0}=0
\end{cases}~.
\end{equation}
Following Bourgain's work \cite{Bourgain96}, one can try to construct a solution \( w(t) \) of  \eqref{intro:eq_nlw_nl} through a contraction mapping argument at a sub-critical regularity \( \nu> s_d \). In addition to probabilistic Strichartz estimates for \( F^\omega(t) \), this requires a (probabilistic) nonlinear smoothing estimates for \( w(t) \). For example, in the case of the nonlinear Schrödinger equation, this can be proven using either bilinear Strichartz estimates \cite{BOP15,BOP17,Bourgain96,Brereton16} or local smoothing estimates \cite{DLM18}. However, the equation \eqref{intro:eq_nlw_nl} does not exhibit nonlinear smoothing. To see this, we examine the low-high interaction term \( \nabla P_1 F(t) \cdot \nabla P_{\gg 1} F(t) \). Heuristically, we have for any \( \nu >s_d>s \) that 
\begin{equation*}
|\nabla|^{\nu} \duh \nabla P_1 F^\omega \cdot \nabla P_{\gg 1} F^\omega \dtp \simeq \int_0^t \sin((t-t^\prime)|\nabla|)~ \nabla P_1 F^\omega \cdot |\nabla|^{\nu-1} \nabla P_{\gg 1} F^\omega \dtp
\end{equation*}
Thus, the linear evolution \( F^\omega(t) \) is attacked by more than \( s \) derivatives. Since the Duhamel integral does not increase the spatial regularity, and the bilinear Strichartz estimates for the wave equation do not gain spatial derivatives, we cannot show a nonlinear smoothing estimate for this term. In fact, by choosing the initial data to be frequency localized on two cubes of scale \( \sim 1\), one at distance \( \sim 1 \) and one at distance \( \sim N \gg 1 \) from the origin, we see that this term may have the same spatial regularity as the initial data. We remark, however, that there are bilinear estimates which gain derivatives in null directions, see e.g. \cite{DFS12,FK00,KRT02}, and the references therein.\\
In the above heuristic, we have seen that the low-high interactions form the main obstacle towards the well-posedness of \eqref{intro:eq_nlw_nl} at a regularity \( \nu> s_d \). In other dispersive equations, such as the Benjamin-Ono equation, the low-high interactions can be removed by using a gauge transformation \cite{Tao04}. We refer to \cite{Deng15} for the (difficult) implementation of this idea in a probabilistic setting. Unfortunately, \eqref{intro:eq_nlw_nl} does not appear to have such a gauge transformation. Instead, we remove the low-high interactions by viewing them as part of the linear evolution for the high-frequency data. To make this precise, we first need to introduce an iterative method. For \( n\geq 0 \) and \( N= 2^n \), we set 

\begin{equation*}
 Q_1 f^\omega(x):= g_0(\omega) P_0 f(x)\quad \text{and} \quad Q_{N} f^\omega(x) := \sum_{N/2\leq \| k \|_2 < N} g_k(\omega) P_k f (x), ~\text{where}~  N \geq 2~.
 \end{equation*}
 We remark that the family of random functions \( \{ Q_N f \}_{N\geq 1} \) is jointly independent, which is essential for the argument. Furthermore, we define
 \begin{equation*}
 Q_{\leq N}f^\omega(x):= \sum_{M\leq N} Q_{M}f^\omega(x)~.
 \end{equation*}
 Since the frequency-truncated initial data is smooth, there exists a solution \( u_n \) of 

 \begin{equation}\label{intro:eq_un}
\begin{cases}
 -\partial_{tt} u_n + \Delta u_n = |\nabla u_n|^2\\
 u_n|_{t=0} = Q_{\leq N} f_0^\omega~,~~\partial_t u_n |_{t=0} = Q_{\leq N} f_1^\omega ~.
\end{cases}
\end{equation}
Our goal is to prove the convergence of \( u_n \) in the low regularity space \( C_t^0 H_x^s \), and define the solution \( u \) as the limit of the sequence \( u_n \). First, we define the increment \( v_n \) by writing \( u_n = u_{n-1} + v_n \). To simplify the notation, we use the convention \( u_{-1} = 0\). Then, the equation for \( v_n \) reads 

\begin{equation}\label{intro:eq_vn}
\begin{cases}
 -\partial_{tt} v_n + \Delta v_n = |\nabla v_n|^2 + 2 \nabla u_{n-1} \cdot \nabla v_n   \\
 v_n|_{t=0} = Q_{N} f_0^\omega~,~ ~ \partial_t v_n |_{t=0} = Q_{N} f_1^\omega ~.
\end{cases}
\end{equation}
To control \( v_n \) uniformly in \( n \geq 0 \), it is necessary to decompose it into a rough, linear component and a smooth, nonlinear component. For a fixed parameter \( \gamma \in (0,1) \), we define the adapted linear evolution \( F_n^\omega \) as the solution to 

\begin{equation}\label{intro:eq_Fn}
\begin{cases}
-\partial_{tt} F_{n}^\omega+\Delta F_{n}^\omega =  2  \nabla P_{\leq N^\gamma} u_{n-1} \cdot \nabla F_{n}^\omega\\
F_{n}^\omega|_{t=0} = Q_N f_0^\omega~,~~ \partial_t F_{n}^\omega|_{t=0} = Q_N f_1^\omega~. 
\end{cases}
\end{equation}
As a consequence, the equation for the nonlinear component \( w_n = v_n -F_n^\omega \) is given by 
 \begin{equation}\label{intro:eq_wn}
 \begin{cases}
 -\partial_{tt} w_n + \Delta w_n =|\nabla F_n^\omega + \nabla w_n|^2 + 2 \nabla u_{n-1} \nabla w_n + 2  \nabla P_{>N^\gamma} u_{n-1}  \cdot \nabla F_n^\omega~,\\
 w_n|_{t=0} = 0~, ~~ \partial_t w_n|_{t=0} =0~. 
 \end{cases}
 \end{equation}
 To obtain the lowest regularity \( s \), we will later choose \( \gamma=0.88 \), see \eqref{fin:eq_parameters}. 
 Therefore, the inhomogeneous term \( \nabla P_{> N^\gamma} u_{n-1} \cdot \nabla F_n^\omega \) in \eqref{intro:eq_wn} is essentially a high-high interaction. We can then hope to control \( w_n \) at a higher regularity than \( F_n^\omega \). \\
After this description of our iteration scheme and decomposition, we now mention the remaining difficulties in the implementation. Even though \eqref{intro:eq_Fn} is linear in \( F_n^\omega \), it is highly nonlinear in the random variables \( \{ g_k \colon \| k \|_2 < N/2 \} \). The resulting difficulties on the probabilistic side of the argument can be solved using the truncation method of de Bouard and Debussche \cite{BD99}. In order to prove probabilistic Strichartz estimates for \( F_n^\omega \), one needs to control the effect of the variable-coefficient term \( \nabla P_{\leq N^\gamma} u_{n-1} \nabla F_n^\omega \) on the frequency support of \( F_n^\omega \). For this, we rely on refined Strichartz estimates and re-centered Besov-type spaces. Finally, we control the nonlinear component \( w_n \). To handle the low-high interaction term \( \nabla P_1 w \cdot \nabla F_n^\omega \), we place \( w_n \) in the space of frequency-localized functions \( \YN \), which is defined in \eqref{prelim:eq_YN}.\\

\begin{rem} In the end of this introduction, we now mention a related method of Bourgain. In \cite{Bourgain97}, Bourgain proves the invariance of the Gibbs measure for a certain Gross-Pitaevski equation. To this end, he examines the Cauchy problem 
 \begin{equation}\label{intro:eq_gross_pitaevski}
\begin{cases}
i \partial_t u + \Delta u + (V * |u|^2) u = 0,   \qquad (t,x)\in \mathbb{R}\times \mathbb{T}^3\\
 u|_{t=0} = \phi^\omega.
\end{cases}
\end{equation}
The interaction potential \( V \) satisfies \( |\widehat{V}(k)|\lesssim \langle k \rangle^{-\beta} \), where \( \beta >2 \), and \( \widehat{V}(0) = 0 \) (after a renormalization). The random data is given by \( \phi^\omega = \sum_{k\in \mathbb{Z}^3} g_k(\omega) \langle k \rangle^{-1} e^{ikx} \), and hence corresponds to a typical sample of the Gibbs measure.  The method of \cite{Bourgain97} combines a quasi-linear iteration scheme (cf. \cite[(3.8)]{Bourgain97}) with a detailed analysis of a power series expansion (cf. \cite[(3.41)]{Bourgain97}). It also has some similarities with the method of this paper, see e.g. Proposition \ref{lin:prop_freq_envelope} and \cite[(5.6)]{Bourgain97}.  In contrast to the derivative nonlinear wave equation \eqref{intro:eq_nlw}, however,  the Gross-Pitaevski equation \eqref{intro:eq_gross_pitaevski} exhibits a nonlinear smoothing effect. In fact, even though \( \phi^\omega \) only has Sobolev-regularity \( s= -1/2 - \), one can show that 
\begin{equation*}
\int_0^t e^{it^\prime\Delta} (V*|e^{it^\prime\Delta} \phi^\omega|^2) e^{it^\prime\Delta} \phi^\omega \, \mathrm{d}t^\prime \in H_x^{0-}(\mathbb{T}^3)~\text{a.s.} 
\end{equation*}
The high-high interactions will experience smoothing through the potential \( V \), while the low-high interactions experience smoothing through a bilinear dispersive effect. \\
Due to the nonlinear smoothing effect in \eqref{intro:eq_gross_pitaevski}, it is unclear to the author whether Bourgain's method can be extended to \eqref{intro:eq_nlw}, and we leave this question for future research. Conversely, it would be interesting to know if modern methods can improve the condition \( \beta > 2 \) on the interaction potential \( V \). 
\end{rem}

\paragraph{Acknowledgements}~\\
The author thanks his doctoral advisor Terence Tao for his invaluable guidance and support. The author also thanks Rowan Killip and Monica Visan for many interesting discussions. Furthermore, the author thanks the anonymous referees for several helpful comments.

\section{Notation and Preliminaries}
In this section, we will provide the necessary notation and preliminaries for the rest of the paper. In Section \ref{sec:function_spaces}, we construct spaces of frequency-localized functions. In Section \ref{sec:strichartz}, we recall the Strichartz estimates for the wave equation. In particular, we describe the refinement of Klainerman and Tataru \cite{KT99}. 

\subsection{Function Spaces}\label{sec:function_spaces}
For any function \( f\in L^1(\mathbb{R}^d) \), we define its Fourier transform \( \widehat{f} \)  by
\begin{equation*}
\widehat{f}(\xi) := \frac{1}{(2\pi)^{\frac{d}{2}}} \int_{\mathbb{R}^d} \exp(-i x \cdot \xi) f(x) \dx~. 
\end{equation*}
Let \( \varphi \colon \mathbb{R}^d \rightarrow \mathbb{R} \) be a smooth, compactly supported function s.t. \( \varphi|_{B(0,1)} \equiv 1 \) and \( \varphi|_{\mathbb{R}^d\backslash B(0,2)} \equiv 0 \). We set \( \psi_1(\xi) = \varphi(\xi) \) and \( \psi_M(\xi) := \varphi(\xi/M)- \varphi(2\xi/M) \), \( M \geq 2 \). For any dyadic \( M \geq 1 \), we define the re-centered Littlewood-Paley operators by
\begin{equation*}
\widehat{P_{M;k}f}(\xi):= \psi_M(\xi-k) \widehat{f}(\xi)  
\end{equation*}
The standard Littlewood-Paley projections \( P_M \) are given by \( P_{M;0} \).  We also use the fattened Littlewood-Paley projections \( \widetilde{P}_M \), which are defined using multipliers \( \widetilde{\psi}_M \) with slightly larger support. \\
The following function spaces are partly motivated by the frequency envelopes in \cite{Tao01,Tao04}. 
We first define two weight functions \( c \colon 2^\mathbb{N} \rightarrow \mathbb{R}_+ \). Let \( N\geq 1 \) be a fixed dyadic integer and  let \( D > 0 \) be arbitrary.\\
To capture functions that are localized at frequencies \( \sim N \), we set
\begin{equation}\label{prelim:eq_cN}
c_{N,D}(M):= \max \left( \frac{N}{M}, \frac{M}{N} \right)^{D}~.
\end{equation}
In addition, to capture functions localized at frequencies \( \lesssim N \), we set 
\begin{equation}\label{prelim:eq_clN}
c_{\leq N,D}(M):= \max \left( 1,\frac{M}{N} \right)^{D}~.
\end{equation}
Next, let \( u\colon \mathbb{R}^{1+3} \rightarrow \mathbb{R} \) be a function on space-time. We define frequency localized versions of the \( L_t^\infty L_x^2 \)-norm by
\begin{align}
\| u \|_{\XNT}&:= \sum_{M\geq 1} c_{N,D}(M) \| P_M u \|_{L_t^\infty L_x^2([0,T]\times \rthree)}~,\\
\| u \|_{\XlNT}&:= \sum_{M\geq 1} c_{\leq N,D}(M) \| P_M u \|_{L_t^\infty L_x^2([0,T]\times \rthree)}~.
\end{align}
Similarly, we define frequency localized versions of the Strichartz-type \( L_t^2 L_x^\infty \)-norm by 
\begin{align}
\| u \|_{\SNT}&:= \sum_{M\geq 1} c_{N,D}(M) \| P_M u \|_{L_t^2 L_x^\infty([0,T]\times \rthree)}~,\\
\| u \|_{\SlNT}&:= \sum_{M\geq 1} c_{\leq N,D}(M) \| P_M u \|_{L_t^2 L_x^\infty([0,T]\times \rthree)}~.
\end{align}
The function spaces corresponding to the norms above are given by 
\begin{equation}\label{prelim:eq_spaces}
\begin{aligned}
\XNT &:= \{ u \in C_t^0 L_x^2([0,T]\times \rthree)\colon \| u \|_{\XNT} < \infty \}~, \\
\XlNT& := \{ u \in C_t^0 L_x^2 ([0,T]\times \rthree)\colon \| u \|_{\XlNT} < \infty \}~, \\
\SNT &:= \{ u \in L_t^2 L_x^\infty([0,T]\times \rthree)\colon \| u \|_{\SNT} < \infty \}~, \\
\SlNT &:= \{ u \in L_t^2 L_x^\infty([0,T]\times \rthree ) \colon \| u \|_{\SlNT} < \infty \}~. 
\end{aligned}
\end{equation} 
Note that the \( \XNT \) and \( \XlNT \)-spaces only contain functions in \( C_t^0 L_x^2([0,T]\times \rthree) \). We now record some basic properties of these spaces. 

\begin{lem}\label{prelim:lem_basic_properties}
Let \( N\geq 1 \) be a fixed dyadic integer and  let \( D > 0 \) be arbitrary. Then, the spaces \( \XNT \), \( \XlNT \), \( \SNT \), and \( \SlNT \) equipped with their corresponding norms are complete. \\
Furthermore, for each \( u \in \XNT \), the mapping
\begin{equation}\label{prelim:eq_continuity}
t \in [0,T] \mapsto \| u \|_{\XN([0,t])} 
\end{equation}
is continuous. An analogous continuity statement also holds for the other function spaces. 
\end{lem}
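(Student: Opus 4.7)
The plan is to reduce both parts of the lemma to Fatou's lemma and the dominated convergence theorem applied to the counting measure on the dyadic index set $\{M \in 2^\mathbb{N}\}$. The underlying pointwise Banach-space facts---that $L_t^\infty L_x^2$ and $L_t^2 L_x^\infty$ are complete, and that each $P_M$ is bounded on $L_x^p$ uniformly in $M$ (via Young's inequality, since the convolution kernel has $M$-independent $L^1$-norm)---carry all the real weight. I would write out the argument once for $\XNT$; the other three spaces require only cosmetic changes.

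\textbf{Completeness.} Let $\{u_n\}$ be Cauchy in $\XNT$. Since $c_{N,D}(M) \geq 1$ and the Littlewood-Paley partition of unity $\sum_M \psi_M \equiv 1$ yields $\|v\|_{C_t^0 L_x^2} \leq \sum_M \|P_M v\|_{C_t^0 L_x^2} \leq \|v\|_{\XNT}$, the sequence is Cauchy in $C_t^0 L_x^2([0,T]\times \rthree)$, hence converges to some $u$ there. Uniform boundedness of $P_M$ on $L_x^2$ then gives $P_M u_n \to P_M u$ in $C_t^0 L_x^2$ for every $M$. Two applications of Fatou's lemma on the counting measure on dyadic scales produce $\|u\|_{\XNT} \leq \liminf_n \|u_n\|_{\XNT} < \infty$ and $\|u - u_m\|_{\XNT} \leq \liminf_n \|u_n - u_m\|_{\XNT} \to 0$ as $m \to \infty$. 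The same reasoning handles $\XlNT$ by replacing $c_{N,D}$ with $c_{\leq N,D}$, and handles $\SNT$ and $\SlNT$ by replacing $L_t^\infty L_x^2$ with $L_t^2 L_x^\infty$ throughout; in those last two cases the limit object lies in $L_t^2 L_x^\infty$, consistent with the definition \eqref{prelim:eq_spaces}.

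\textbf{Continuity.} Fix $u \in \XNT$ and set $f_M(t) := c_{N,D}(M)\,\|P_M u\|_{L_t^\infty L_x^2([0,t]\times \rthree)}$, so that $\|u\|_{\XN([0,t])} = \sum_M f_M(t)$. I would argue that each $f_M$ is continuous on $[0,T]$. Right-continuity follows since $f_M$ is non-decreasing and $s \mapsto \|P_M u(s)\|_{L_x^2}$ is continuous, so $\sup_{s \in [t,t+h]}\|P_M u(s)\|_{L_x^2} \to \|P_M u(t)\|_{L_x^2} \leq f_M(t)/c_{N,D}(M)$. Left-continuity follows from compactness of $[0,t]$: the supremum is attained at some $s^* \in [0,t]$, and either $s^* < t$, in which case $s^* \in [0, t-h]$ for all small $h > 0$ and $f_M(t-h) = f_M(t)$, or $s^* = t$, in which case continuity of $s \mapsto \|P_M u(s)\|_{L_x^2}$ squeezes $f_M(t-h) \to f_M(t)$. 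Since $0 \leq f_M(t) \leq f_M(T)$ and $\sum_M f_M(T) = \|u\|_{\XNT} < \infty$, dominated convergence on the counting measure gives continuity of $t \mapsto \sum_M f_M(t)$. For the $\SNT$ and $\SlNT$ spaces the analogous $f_M(t) := c_{N,D}(M)\bigl(\int_0^t \|P_M u(s)\|_{L_x^\infty}^2 \,ds\bigr)^{1/2}$ is continuous simply because the inner integral is absolutely continuous in $t$, and the remainder of the argument is identical.

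\textbf{Expected difficulty.} There is no serious obstacle: the whole proof is a routine counting-measure Fatou/DCT exercise once one recognizes the $\sum_M$ as an $\ell^1$-sum. The only point that requires even a moment's thought is the left-continuity of the $L_t^\infty L_x^2([0,t])$-norm at points where the supremum is attained only at the right endpoint, which was handled above via the $L_x^2$-valued continuity of $P_M u$.
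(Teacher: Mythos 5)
Your proposal is correct and follows essentially the same route as the paper: the paper omits the completeness proof as standard (you supply the standard Cauchy-in-$C_t^0L_x^2$-plus-Fatou argument), and for continuity it likewise notes that each summand is continuous and passes to the limit via uniform convergence of the partial sums, which is exactly your dominated-convergence-on-the-counting-measure step with dominant $f_M(T)$.
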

The continuity of \eqref{prelim:eq_continuity} is important in the proof of Proposition \ref{nl:prop_wn}, which uses a contraction mapping argument.
\begin{proof}
The completeness follows from standard arguments in real analysis, and the proof is omitted. \\
It remains to show the continuity statement \eqref{prelim:eq_continuity}. Since \( w\in C_t^0 L_x^2([0,T]\times \rthree)\), each individual summand \( t \mapsto \| P_M u \|_{L_t^\infty L_x^2([0,T]\times \rthree)} \) is continuous. Since \( \| u \|_{\XN([0,t])} \) is a uniform limit of the partial sums in \( M \geq 1 \), the result follows. 
\end{proof}
Equipped with the functions spaces above, we are now ready to define the function space \( \YN \) for the solution \( w_n \) of \eqref{intro:eq_wn}. For given parameters \( \nu > 2 \), \( \sigma = \nu-1 - \), and \( \eta,D > 0 \), we set 
\begin{equation}\label{prelim:eq_YN}
\begin{aligned}
\YN([0,T]):=\{& u \colon [0,T]\times \rthree \rightarrow \mathbb{R} | ~ \bra^\nu u, \bra^{\nu-1} \partial_t u\in ( \XNeta \medcap \XlN)([0,T]), \\
& \text{and} \quad \bra^\sigma u \in (\SNeta \medcap \SlN)([0,T]) \}~. \\
\end{aligned}
\end{equation}
The corresponding norm is defined by
\begin{equation*}
\begin{aligned}
\| u \|_{\YN([0,T])} :&= \| \bra^\nu u \|_{(\XNeta \bigcap \XlN)([0,T])} +  \| \bra^{\nu-1} \partial_t u \|_{(\XNeta \bigcap \XlN)([0,T])}  \\
&+ \| \bra^\sigma u \|_{(\SNeta \bigcap \SlN)([0,T])}~. 
\end{aligned}
\end{equation*}
The main regularity parameter is \( \nu > 2 \), and it describes the number of derivatives of \( w_n \) that are controlled in the \( L_t^\infty L_x^2 \)-type norm. The value of \( \sigma \) is then determined by the deterministic Strichartz estimates. Finally, the parameters \( \eta> 0 \) and \( D >0 \) describe the localization to frequencies \( \sim N \) and \( \lesssim N \), respectively. Due to high-high to low frequency interactions in the quadratic term \( |\nabla w_n|^2 \), we have to choose \( \eta < \nu - 1 \). In contrast, there is essentially no transfer from low to high frequencies over short time intervals, and hence \( D >0 \) can be chosen arbitrarily large. \\
The (nearly) optimal choice of the parameters leads to \( \nu = 2.1001 \), see \eqref{fin:eq_parameters}. This may seem surprising, since this is an absolute amount above the Lorentz critical regularity \( s_3 = 2 \). The additional regularity is used to control the effect of the variable-coefficient term \( \nabla P_{\leq N^\gamma} u_{n-1} \cdot \nabla F_n^\omega \) on the frequency support of the randomized initial data, see Proposition  \ref{lin:prop_freq_envelope}.

Recall that the atoms in the Wiener randomizaton are localized in frequency space to cubes of scale \( \sim 1 \). To take advantage of this, we introduce the following Besov-type spaces.
Let \( \gamma \in (0,1) \) and \( k \in \mathbb{Z}^3 \) with \( \| k \|_2 \sim N \). We define the weight function
\begin{equation}\label{prelim:eq_ck}
c^{\rho,\gamma}_{k,D}(M) := M^\rho \max \left( 1, \frac{M}{N^\gamma} \right)^{D}~. 
\end{equation}
Using this weight function, we set
\begin{equation*}
\| f \|_{\Bk} := \sum_{M\geq 1} c^{\rho,\gamma}_{k,D}(M) \| P_{M;k} f \|_{L_x^2(\rthree)} \quad \text{and} \quad \Bk := \{ f \in L_x^2(\rthree)\colon \| f \|_{\Bk}  < \infty \}~. 
\end{equation*}

\subsection{Strichartz Estimates}\label{sec:strichartz}

First, we state a local Strichartz estimate in the form needed for this paper. 

\begin{lem}[Strichartz Estimate]\label{prelim:lem_strichartz}
Let \( \nu> 2 \) and let \( \sigma= \nu - 1 - \delta \), where \( \delta >0 \) is small. Let \( 0 < T \leq 1 \) and let \( u \) be a solution of 
\begin{equation}\label{prelim:eq_strichartz}
\begin{cases}
-\partial_{tt} u + \Delta u = F  \qquad\qquad \text{for} ~ (t,x)\in [0,T]\times \rthree \\
u|_{t=0}=f_0 , ~ \partial_t u|_{t=0}=f_1 
\end{cases}~.
\end{equation}
Then, we have that 
\begin{align*}
&\| u \|_{C_t^0 H_x^\nu([0,T]\times \rthree)}+\| \partial_t u \|_{C_t^0 H_x^{\nu-1}([0,T]\times \rthree)} + \| \langle \nabla \rangle^{\sigma} u \|_{L_t^2 L_x^\infty([0,T]\times \rthree)} \\
&\lesssim_{\nu,\sigma} \| f_0 \|_{H_x^\nu(\rthree)} + \| f_1 \|_{H_x^{\nu-1}(\rthree)} + \| \langle \nabla \rangle^{\nu-1} F \|_{L_t^1 L_x^2([0,T]\times \rthree)}~. 
\end{align*}
In particular, \( u \in C_t^0 H_x^\nu([0,T]\times \rthree) \) and \( \partial_t u \in C_t^0 H_x^{\nu-1}([0,T]\times \rthree) \). 
\end{lem}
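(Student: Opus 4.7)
The plan is to split the lemma into two parts: (i) the energy-type bounds for $u$ in $C_t^0 H_x^\nu$ and $\partial_t u$ in $C_t^0 H_x^{\nu-1}$, which are standard, and (ii) the $L_t^2 L_x^\infty$ bound with $\sigma = \nu -1 - \delta$ derivatives, which is the sole content; the derivative loss $\delta>0$ is forced by the fact that $(q,r) = (2,\infty)$ is the forbidden endpoint of the wave Strichartz estimate in $\mathbb{R}^{1+3}$.

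For (i) I would write $u$ via Duhamel's formula as $u(t) = \cos(t|\nabla|)f_0 + \frac{\sin(t|\nabla|)}{|\nabla|} f_1 + \int_0^t \frac{\sin((t-s)|\nabla|)}{|\nabla|} F(s)\,ds$, apply $\langle \nabla \rangle^{\nu}$ (respectively $\langle \nabla \rangle^{\nu-1} \partial_t$), and use the $L_x^2$-isometry of the linear propagators together with Minkowski's inequality in time to pull $\|\cdot\|_{L_x^2}$ inside the Duhamel integral. The gain of one derivative from $|\nabla|^{-1}\sin((t-s)|\nabla|)$ matches the $\langle \nabla \rangle^{\nu-1}$ weight on $F$, and continuity in $t$ follows from dominated convergence.

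For (ii) the key obstruction is the endpoint: in $d=3$ the sharp Strichartz estimate $\|v\|_{L_t^2 L_x^\infty} \lesssim \|(f_0,f_1)\|_{\dot H^1 \times L^2}$ fails, so one cannot take $\sigma = \nu-1$. The workaround I would use is a Littlewood–Paley plus Bernstein argument. Writing $u = \sum_{M \geq 1} P_M u$, each $P_M u$ solves the wave equation with data $(P_M f_0, P_M f_1)$ and forcing $P_M F$. Pick a non-endpoint admissible wave Strichartz pair $(2+\eta, r_\eta)$ with $r_\eta<\infty$ (e.g. chosen so that $\tfrac{2}{2+\eta} + \tfrac{2}{r_\eta}=1$). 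The sharp Strichartz estimate together with Bernstein $\|P_M v\|_{L_x^\infty} \lesssim M^{3/r_\eta} \|P_M v\|_{L_x^{r_\eta}}$, followed by Hölder in time on the bounded interval $[0,T]$ (using $T\leq 1$), gives
\begin{equation*}
\| P_M u \|_{L_t^2 L_x^\infty([0,T]\times \rthree)} \lesssim M^{1+\epsilon} \| P_M f_0 \|_{L_x^2} + M^\epsilon \| P_M f_1 \|_{L_x^2} + M^\epsilon \| P_M F \|_{L_t^1 L_x^2},
\end{equation*}
where $\epsilon = \epsilon(\eta) \to 0$ as $\eta \to 0$.

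Finally I would multiply by the frequency weight $M^\sigma$, sum over dyadic $M$, and use Minkowski to interchange the $\ell^1_M$-sum with $L_t^2$. A Cauchy–Schwarz in $M$ then converts the $\ell^1_M$-sum into an $\ell^2_M$-sum at the cost of a geometric factor $\bigl(\sum_{M\geq 1} M^{-2(\nu-1-\sigma-\epsilon)}\bigr)^{1/2}$, which is finite precisely because $\nu - 1 - \sigma = \delta > 0$; one simply chooses $\eta$ small enough that $\epsilon < \delta$. This yields $\|\langle \nabla \rangle^\sigma u\|_{L_t^2 L_x^\infty} \lesssim \|f_0\|_{H_x^\nu} + \|f_1\|_{H_x^{\nu-1}} + \|\langle\nabla\rangle^{\nu-1}F\|_{L_t^1 L_x^2}$, completing the proof. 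The only genuinely nontrivial step is calibrating the Bernstein-induced loss $\epsilon$ against the available derivative surplus $\delta$; everything else is the standard energy/Duhamel template.
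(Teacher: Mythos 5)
Your proposal is correct and follows essentially the same route as the paper: energy/Duhamel bounds for the $C_t^0H_x^\nu\times C_t^0H_x^{\nu-1}$ part, and for the $L_t^2L_x^\infty$ part a Littlewood--Paley decomposition combined with the non-endpoint Strichartz pair $(q,p)=(2+,\infty-)$ from Keel--Tao, Bernstein's estimate, and H\"older in time on $[0,T]$, with the Bernstein loss absorbed by the surplus $\delta=\nu-1-\sigma$. The paper only sketches this, so your calibration of the loss $\epsilon(\eta)<\delta$ is a correct filling-in of the same argument rather than a different one.
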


\begin{proof}
This lemma follows directly from the (global) Strichartz estimates in \cite{KT98}. In order to deal with the inhomogeneous norms, we also use that 
\begin{equation*}
\Big \| \frac{\sin(t|\nabla|)}{|\nabla|} \Big \|_{H_x^{\nu-1}(\rthree) \rightarrow H_x^{\nu}(\rthree)} \leq \max( 1, |t|) = 1~. 
\end{equation*}
The local estimate with an \( \epsilon\)-loss at the endpoint \( (2,\infty) \) follows from Hölder's inequality, Bernstein's estimate, and \cite[Corollary 1.3]{KT98} with \( (q,p)=(2+,\infty-)\).  
\end{proof}

In the following, we recall a refined Strichartz estimate from \cite{KT99}. This estimate has already been used in the context of the Wiener randomization in \cite{DLM17}. 
\begin{lem}[{Refined Strichartz Estimate \cite{KT99}}]\label{prelim:lem_refined_strichartz}
Assume that \( k \in \mathbb{Z}^3 \) with \( \| k \|_2 \sim N \), and let \( 1 \leq M \ll N \). Furthermore, let \( (q,p) \) be a sharp wave-admissible Strichartz pair, i.e., \( 2 \leq q,p < \infty \) and 
\begin{equation*}
\frac{1}{q}+ \frac{1}{r} = \frac{1}{2}~. 
\end{equation*}
Then, it holds for all \( T>0 \) that
\begin{equation*}
\Big \| \duh P_{M;k} F \dtp \Big \|_{L_t^q L_x^p([0,T]\times \rthree)} \lesssim \Big( \frac{M}{N} \Big)^{\frac{1}{2}-\frac{1}{p}} N^{-1} N^{\frac{3}{2}-\frac{1}{q}- \frac{3}{p}} \| F \|_{L_t^1 L_x^2([0,T]\times \rthree)}~. 
\end{equation*}
\end{lem}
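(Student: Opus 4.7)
The plan is to derive Lemma~\ref{prelim:lem_refined_strichartz} as the inhomogeneous version of a frequency-adapted Strichartz estimate of Klainerman--Tataru \cite{KT99} applied to the frequency slab $P_{M;k}$. First, writing
\begin{equation*}
\frac{\sin((t-s)|\nabla|)}{|\nabla|}\, P_{M;k} = \frac{1}{2i} \cdot \frac{e^{i(t-s)|\nabla|} - e^{-i(t-s)|\nabla|}}{|\nabla|}\, P_{M;k},
\end{equation*}
and using $\||\nabla|^{-1}P_{M;k}\|_{L^2 \to L^2} \lesssim N^{-1}$ (since $P_{M;k}$ is supported at $|\xi| \sim N$), I peel off the $N^{-1}$ factor in the target bound. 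By the Christ--Kiselev lemma, the inhomogeneous estimate then reduces to the homogeneous bound
\begin{equation*}
\|e^{\pm it|\nabla|} P_{M;k} f\|_{L^q_t L^p_x([0,T]\times \rthree)} \lesssim \left(\frac{M}{N}\right)^{1/2 - 1/p} N^{3/2 - 1/q - 3/p} \|f\|_{L^2_x(\rthree)},
\end{equation*}
valid for sharp wave-admissible $(q,p)$ in $\rthree$ and $1 \leq M \ll N$.

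To prove the homogeneous bound, I rotate coordinates so that $k/|k| = e_1$. Within the support of $\psi_M(\xi - k)$, the symbol expands as
\begin{equation*}
|\xi| = |k| + (\xi-k) \cdot e_1 + \frac{|(\xi-k)_\perp|^2}{2|k|} + O\!\left(\frac{M^3}{N^2}\right).
\end{equation*}
Modulo the global phase $e^{it|k|}$ and the unit-speed transport induced by the linear term, the remaining evolution behaves like a two-dimensional Schr\"odinger group in the transverse variable $x_\perp$ at effective time $t/(2|k|)$. This transverse propagator, acting on frequencies in a two-dimensional ball of radius $M$, satisfies a refined $L^1_{x_\perp}\to L^\infty_{x_\perp}$ dispersive estimate of size $|t|^{-1} N$, which is sharper than the generic wave dispersive estimate $|t|^{-1} N^2$ at frequency $\sim N$. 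Running the Keel--Tao $TT^*$ interpolation scheme from \cite{KT98} between this refined dispersive estimate and $L^2$ conservation, and specializing to sharp admissible pairs, produces precisely the gain factor $(M/N)^{1/2 - 1/p}$ in the homogeneous estimate above.

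The main obstacle is that the $O(M^3/N^2)$ error in the phase expansion becomes non-negligible once $M \gtrsim N^{2/3}$, so the Schr\"odinger-type approximation is not uniformly valid throughout the entire relevant range $M \ll N$. This is circumvented in \cite{KT99} by a wave-packet decomposition: one further splits $P_{M;k} f$ into wave packets supported on tubes of angular aperture $(M/N)^{1/2}$ aligned with $e_1$, establishes the refined dispersive bound on each packet separately, and sums the packet bounds via almost-orthogonality. This decomposition is the essential content of the refined Strichartz estimates in \cite{KT99}, and once phrased at this level of generality the lemma follows directly.
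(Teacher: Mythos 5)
The paper gives no proof of this lemma at all: it is imported as a black box from \cite{KT99}, accompanied only by the heuristic remark that localization to a small ball far from the origin rules out Knapp-type concentration. Your sketch therefore necessarily goes further than the paper, and its outline is essentially the correct account of where the estimate comes from. The reduction to the homogeneous estimate is fine (in fact Christ--Kiselev is unnecessary here: since the source norm is \( L_t^1 L_x^2 \), Minkowski's inequality already converts the homogeneous bound into the inhomogeneous one), the factor \( N^{-1} \) from \( |\nabla|^{-1} P_{M;k} \) is correct, and the homogeneous bound you state is the right one: under sharp admissibility \( \frac{1}{q} = \frac{1}{2} - \frac{1}{p} \) one has \( N^{\frac{3}{2}-\frac{1}{q}-\frac{3}{p}} = N^{1-\frac{2}{p}} \), so the claimed constant equals \( (MN)^{\frac{1}{2}-\frac{1}{p}} \), which is exactly what the non-endpoint Keel--Tao/Ginibre--Velo \( TT^* \) scheme produces from a dispersive bound \( \| e^{it|\nabla|} P_{M;k} \|_{L^1 \rightarrow L^\infty} \lesssim |t|^{-1} M N \) together with \( L^2 \) conservation. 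Deferring the uniform-in-\( M \) justification of that dispersive/wave-packet input to \cite{KT99} is acceptable, since that is precisely what the paper itself does.

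One step of your heuristic, taken literally, does not close. You quote only the transverse two-dimensional Schr\"odinger constant \( |t|^{-1} N \) and compare it to the three-dimensional wave constant \( |t|^{-1} N^2 \); running \( TT^* \) with the constant \( N \) in place of \( N^2 \) would yield an improvement \( (N/N^2)^{\frac{1}{2}-\frac{1}{p}} = N^{-(\frac{1}{2}-\frac{1}{p})} \), which matches the asserted gain \( (M/N)^{\frac{1}{2}-\frac{1}{p}} \) only when \( M \sim 1 \). The missing ingredient is the longitudinal direction: the frequency support has radial thickness \( \sim M \), along which there is no dispersion, and this contributes a factor \( M \) to the kernel, giving the full constant \( |t|^{-1} M N \) and hence the correct gain for all \( 1 \leq M \ll N \). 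Relatedly, the \( M \gtrsim N^{2/3} \) breakdown of the quadratic phase approximation that you patch with wave packets can be avoided entirely by proving \( |K_t(x)| \lesssim |t|^{-1} M N \) directly, via stationary phase on the spheres \( |\xi| = \rho \) (a cap of angular radius \( M/N \) integrated over a radial interval of length \( M \)), which is uniform in the whole range \( 1 \leq M \ll N \).
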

The refined Strichartz estimate exhibits a gain in \( M/N \). Since the projection onto small balls at a large distance from the origin essentially rules out the Knapp counterexamples, this is to be expected. 

\section{The truncated equations}\label{sec:it} 
Recall from the introduction that \( u_n \), \( F_n^\omega \), and \( w_n \) are supposed to solve \eqref{intro:eq_un}, \eqref{intro:eq_Fn}, and \eqref{intro:eq_wn}. However, we cannot directly work with the weak formulation of these equations. The problem is unrelated to any estimates in the deterministic part of the argument, and comes only from the moments with respect to \( \omega \in\Omega \). Let us describe the problem by examining \eqref{intro:eq_wn}, which determines the nonlinear component \( w_n \). Since there is no gain of integrability in \( \omega \), the quadratic term \( |\nabla w_n|^2 \) prevents us from using a contraction mapping argument in \( L_\omega^r L_t^q L_x^p \)-type spaces. Nevertheless, by arguing pointwise in \( \omega\), one could construct a solution \( w_n \) of \eqref{intro:eq_wn} on a random time interval \( [0,T_n(\omega)] \). Unfortunately, \( T_n(\omega) \) would also depend on the nonlinear solution \( u_{n-1} \). Since the nonlinear solution \( u_{n-1} \) depends in a complicated fashion on the random variables, it would then be difficult to control \( T_n(\omega)  \) pointwise in \( \omega \) as \( n \rightarrow \infty\). Using the truncation method of de Bouard and Debussche \cite{BD99}, we can circumvent this problem. The main idea is to truncate the nonlinearity of \eqref{intro:eq_wn}, and then work on a fixed deterministic time interval. Due to the truncation, we can use a contraction mapping argument in \( L_\omega^r L_t^q L_x^p\)-type spaces, and also obtain much simpler nonlinear estimates. After all iterates have been constructed, one can remove the truncation by restricting to a small random time interval. \\
Let us also briefly explain why the truncation method is absent from previous work on random dispersive equations. In previous methods, the construction of rough objects, such as \( F_n^\omega\), does not depend on the solution to a nonlinear equation. As a result, they only require a single contraction mapping argument, and it suffices to work on a single random time interval \( [0,T(\omega)] \). We also refer the reader to \cite[Remark 3.5 and 3.7]{BOP18}, which explain the underlying separation between probabilistic and analytic arguments.

After this motivation, we now describe the truncation method of de Bouard and Debussche \cite{BD99}.
 Let \( \theta \colon \mathbb{R}_{\geq 0} \rightarrow [0,1] \) be a smooth function s.t. \( \theta|_{[0,1]} = 1 \) and \( \theta|_{[2,\infty)} = 0 \). We want to define the truncated solutions \( u_{n,\theta}, F_{n,\theta}^\omega,\) and \( w_{n,\theta}\). To simplify the notation, we write \( M=2^m \) and \( N=2^n \). We first define the cutoff functions
 \begin{align}
 \theta_{F,w;\leq n-1}(s)
 &:= \theta\left(\sum_{m=0}^{n-1} \Big(\| \langle \nabla \rangle^{\sigma^\prime} F_{m,\theta}^\omega \|_{\SlMprime([0,s])} +\| \langle \nabla \rangle^{\sigma} w_{m,\theta} \|_{\SlM([0,s])}+ \| \langle \nabla \rangle^{\nu} w_{m,\theta} \|_{ \XlM([0,s])}\Big) \right)~, \notag \\
\theta_{F;n}(s) &:= \theta\left( \| \langle \nabla \rangle^{\sigma^\prime} F_{n,\theta}^\omega \|_{\SNprime([0,s])}\right)~, \notag\\
\theta_{w;n}(s) &:= \theta\left( \| \langle \nabla \rangle^{\sigma} w_{n,\theta} \|_{\SlN([0,s])}+ \| \langle \nabla \rangle^{\nu} w_{n,\theta} \|_{ \XlN([0,s])}\right)~. \label{it:eq_theta_wn}
\end{align}
Let \( F_{\leq n-1,\theta}^\omega := \sum_{m=0}^{n-1} F_{m,\theta}^\omega \) and \( w_{\leq n-1,\theta} := \sum_{m=0}^{n-1} w_{m,\theta} \). 
For future use, we remark that 
\begin{equation}\label{it:eq_triangle}
\begin{aligned}
\| \bra^{\sigma^\prime} F_{\leq n-1,\theta}^\omega \|_{\SlNprime}&= \| \sum_{m=0}^{n-1} \bra^{\sigma^\prime} F_{m,\theta}^{\omega} \|_{\SlNprime} \leq \sum_{m=0}^{n-1} \| \bra^{\sigma^\prime} F_{m,\theta}^\omega \|_{\SlNprime} \leq \sum_{m=0}^{n-1} \| \bra^{\sigma^\prime} F_{m,\theta}^\omega \|_{\SlMprime} ~,\\
\| \bra^\sigma w_{\leq n-1,\theta} \|_{\SlN} &= \| \sum_{m=0}^{n-1} \bra^{\sigma} w_{m,\theta} \|_{\SlN} ~ \leq \sum_{m=0}^{n-1} \| \bra^{\sigma} w_{m,\theta}\|_{\SlN} ~  \leq \sum_{m=0}^{n-1} \| \bra^{\sigma} w_{m,\theta}\|_{\SlM}~.
\end{aligned}
\end{equation}
Then, we let \( F_{n,\theta}^\omega \) be a solution of the truncated equation
\begin{equation}
F_{n,\theta}^\omega(t)= W(t) (Q_N f_0^\omega, Q_N f_1^\omega) + 2 \duh \theta_{F,w;\leq n-1}(s) P_{\leq N^\gamma} \nabla u_{n-1,\theta}(t^\prime) \nabla F_{n,\theta}^\omega(t^\prime) \dtp~. \label{it:eq_truncated_Fn}
\end{equation}
In Section \ref{sec:lin}, it will be useful to decompose \( F_{n,\theta}^\omega \) into a superposition of the solutions corresponding to each individual individual pair \( (\Pk f_0, \Pk f_1) \). Thus, we define \( F_{n,k,\theta} \) as the solution of 
\begin{equation}
F_{n,k,\theta}(t)= W(t) (\Pk  f_0, \Pk f_1) + 2 \duh \theta_{F,w;\leq n-1}(t^\prime) P_{\leq N^\gamma} \nabla u_{n-1,\theta}(t^\prime) \nabla F_{n,k,\theta}(t^\prime) \dtp~. \label{it:eq_truncated_Fnk}
\end{equation}
The nonlinear component \( w_{n,\theta}(t) \) is defined as the solution of 
\begin{align}
w_{n,\theta}(t) &= \duh  \theta_{F;n}(t^\prime) |\nabla F_{n,\theta}^\omega|^2\dtp
		+ 2 \duh \theta_{F;n}(t^\prime) \nabla F_{n,\theta}^\omega \nabla w_{n,\theta} \dtp   \notag\\
		&+ \duh \theta_{w;n}(t^\prime) |\nabla w_{n,\theta}|^2\dtp\label{it:eq_truncated_wn}  
		+ 2 \duh \theta_{F,w;\leq n-1}(t^\prime) \nabla u_{n-1,\theta} \nabla w_{n,\theta} \dtp   \\
		&+ 2 \duh \theta_{F,w;\leq n-1}(t^\prime) \nabla P_{>N^\gamma} \nabla u_{n-1,\theta} \nabla F_{n,\theta}^\omega \dtp \qquad~.   \notag
\end{align}
Finally, we define \( u_{n,\theta} \) through the recursion
\begin{equation}\label{it:eq_truncated_un}
u_{n,\theta}= u_{n-1,\theta} + F_{n,\omega}^\omega + w_{n,\theta}~. 
\end{equation}
Since the truncations in \eqref{it:eq_truncated_Fn} and \eqref{it:eq_truncated_wn} depend on \( n \),  the function \( u_{n,\theta} \) no longer solves a (simple) differential equation. Once we remove the truncations, however, we will still obtain a solution of \eqref{intro:eq_un} on a random time interval.

\begin{rem}
In this section, we have carefully distinguished between the solutions of the actual and truncated differential equations. To simplify the notation, however, we will now drop the subscript \( \theta \). Unless stated otherwise, the functions \( F_n^\omega \), \( w_n \), and \( u_n \) are determined by \eqref{it:eq_truncated_Fn}, \eqref{it:eq_truncated_wn}, and \eqref{it:eq_truncated_un}. 
\end{rem}

\section{The adapted linear evolution \(F_n^\omega\)}\label{sec:lin}

In this section, we study the adapted linear evolution \( F_n^\omega \). Our main objective is to understand the frequency localization of the functions \( F_{n,k} \) and \( F_n^\omega \), which we then use to prove probabilistic Strichartz estimates. In order to avoid continually interrupting the main argument, we deal with any issues of (strong) measurability in the appendix.

\begin{prop}[Frequency profile of the adapted linear evolution] \label{lin:prop_freq_envelope}
Let \( (f_0,f_1) \in H_x^1 \times L_x^2 \). Let \( k \in \mathbb{Z}^3 \) with \( \| k \|_2 \sim N \), let \( \sigma > 1 \), let \( \rho:= \sigma-1-\delta >0 \), and let \( D^{\prime\prime} >0 \) be  arbitrarily large. Assume that \( \phi \colon \mathbb{R}^{1+3} \rightarrow \mathbb{R} \) has frequency support in the ball \( \| \xi \|_2 \lesssim N^\gamma \) and satisfies \( \langle \nabla \rangle^\sigma \phi \in L_t^1 L_x^\infty(\mathbb{R}\times \rthree)\). Furthermore, let \( F_k \) be the solution of 
\begin{equation}\label{lin:eq_Fk}
-\partial_{tt} F_k + \Delta F_k = 2 ~ \nabla \phi \cdot \nabla F_k~, \qquad (F_k,\partial_t F_k )|_{t=0}= ( \Pk f_0,\Pk f_1) ~. 
\end{equation}
Then, we have for all \( 0< T \leq 1 \) that 
\begin{align*}
&\| \nabla F_k \|_{L_t^\infty \Bkprime ([0,T]\times \rthree)}+ \| \partial_t F_k \|_{L_t^\infty \Bkprime ([0,T]\times \rthree)}+  \| F_k \|_{L_t^\infty \Bkprime ([0,T]\times \rthree)}\\
& \lesssim_{\sigma,\rho,\gamma,D^\prime} \| (\Pk f_0 , \Pk f_1) \|_{H^1\times L^2} \exp( C_{\sigma,\rho,\gamma,D^\prime} \| \langle \nabla \rangle^{\sigma}  \phi \|_{L_t^1 L_x^\infty([0,T]\times \rthree)} )~. 
\end{align*}
\end{prop}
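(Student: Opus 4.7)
The strategy is a Gronwall inequality for the weighted energy
\begin{equation*}
\mathcal{E}(t) := \sum_{M\geq 1} c^{\rho,\gamma}_{k,D^\prime}(M) \Bigl( \|P_{M;k} F_k(t)\|_{L^2_x} + \|\nabla P_{M;k} F_k(t)\|_{L^2_x} + \|\partial_t P_{M;k} F_k(t)\|_{L^2_x} \Bigr),
\end{equation*}
which dominates the left-hand side of the proposition. Applying the re-centered projection $P_{M;k}$ to \eqref{lin:eq_Fk} produces a wave equation for $P_{M;k} F_k$ with source $2P_{M;k}(\nabla\phi \cdot \nabla F_k)$; the standard energy identity, together with the elementary bound $\tfrac{d}{dt}\|P_{M;k} F_k\|_{L^2_x} \leq \|\partial_t P_{M;k} F_k\|_{L^2_x}$, reduces matters (after weighting and summing in $M$) to the key source estimate
\begin{equation*}
\sum_{M\geq 1} c^{\rho,\gamma}_{k,D^\prime}(M) \, \|P_{M;k}(\nabla\phi \cdot \nabla F_k)(t)\|_{L^2_x} \lesssim \|\langle\nabla\rangle^\sigma \phi(t)\|_{L^\infty_x} \cdot \mathcal{E}(t),
\end{equation*}
with the implicit constant allowed to depend on $\sigma, \rho, \gamma, D^\prime$.

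The main obstacle is this source bound: a naive Hölder estimate $\|P_{M;k}(\nabla\phi\cdot\nabla F_k)\|_{L^2_x} \leq \|\nabla\phi\|_{L^\infty_x} \|\nabla F_k\|_{L^2_x}$ loses a factor of $N^{\gamma\rho}$ in the low-modulation range $M \lesssim N^\gamma$, because $\sum_{M\lesssim N^\gamma} M^\rho \sim N^{\gamma\rho}$. To avoid this loss I would use a paraproduct-style decomposition, writing $\nabla\phi = \sum_{M''\lesssim N^\gamma}\nabla P_{M''}\phi$ and $\nabla F_k = \sum_{M'} P_{M';k}\nabla F_k$, combined with the Bernstein bound $\|\nabla P_{M''}\phi\|_{L^\infty_x} \lesssim (M'')^{1-\sigma}\|\langle\nabla\rangle^\sigma\phi\|_{L^\infty_x}$. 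A frequency-support analysis then shows that $P_{M;k}(\nabla P_{M''}\phi \cdot P_{M';k}\nabla F_k)$ is non-trivial only in a \emph{high-low} regime $M'' \ll M$ (which forces $M'\sim M$) or in a \emph{low-high} regime $M \ll M''$ (which forces $M' \sim M''$).

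In the high-low regime, the sum $\sum_{M''\ll M}(M'')^{1-\sigma}$ converges because $\sigma > 1$, leaving a contribution $\lesssim \|\langle\nabla\rangle^\sigma\phi\|_{L^\infty_x} \sum_M c^{\rho,\gamma}_{k,D^\prime}(M) \sum_{M'\sim M}\|P_{M';k}\nabla F_k\|_{L^2_x} \lesssim \|\langle\nabla\rangle^\sigma\phi\|_{L^\infty_x}\mathcal{E}(t)$ (using $c^{\rho,\gamma}_{k,D^\prime}(M)\sim c^{\rho,\gamma}_{k,D^\prime}(M')$ for $M\sim M'$). In the low-high regime I would swap the order of summation; the inner weight sum equals $\sum_{M\ll M''} c^{\rho,\gamma}_{k,D^\prime}(M) = \sum_{M\leq M''} M^\rho \lesssim (M'')^\rho$, valid because $M''\lesssim N^\gamma$, and combining with the Bernstein factor $(M'')^{1-\sigma}$ produces $(M'')^{1-\sigma+\rho} = (M'')^{-\delta}$, which is summable because $\rho = \sigma - 1 - \delta$ with $\delta > 0$. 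This is precisely where the strict inequality $\sigma > 1 + \rho$ plays a role, and it is the only essential use of this gap. Assembling the two regimes proves the source estimate, and Gronwall on $[0,T]\subset [0,1]$ then yields $\mathcal{E}(t) \lesssim \mathcal{E}(0) \exp(C\|\langle\nabla\rangle^\sigma\phi\|_{L^1_t L^\infty_x})$. The initial bound $\mathcal{E}(0)\lesssim \|(P_k f_0, P_k f_1)\|_{H^1\times L^2}$ is immediate from $P_{M;k} P_k f_0$ vanishing for $M$ above a fixed absolute constant together with the frequency localisation of $P_k f_0$ at $|\xi| \sim N$. Since $\nabla\phi$ is only $L^1_t L^\infty_x$, the energy identity itself should be rigorously justified by a standard smooth approximation argument.
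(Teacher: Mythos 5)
Your overall strategy coincides with the paper's: a weighted Gronwall argument whose heart is the source estimate $\sum_M c^{\rho,\gamma}_{k,D^{\prime\prime}}(M)\,\|P_{M;k}(\nabla\phi\cdot\nabla F_k)\|_{L^2_x}\lesssim \|\langle\nabla\rangle^\sigma\phi\|_{L^\infty_x}\,\mathcal{E}(t)$, proved by a paraproduct decomposition exploiting $\sigma>1$ and $\rho=\sigma-1-\delta$. The paper runs the same scheme through Duhamel's formula and an integral Gronwall inequality rather than the differential energy identity; that difference is immaterial. Your treatment of the initial data, the multiplier bound on $\nabla P_{M^{\prime\prime}}\phi$, and the approximation remark are all fine.

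There is, however, a concrete error in your frequency-support analysis. The claim that $P_{M;k}(\nabla P_{M^{\prime\prime}}\phi\cdot P_{M';k}\nabla F_k)$ is non-trivial only when $M^{\prime\prime}\ll M$ (forcing $M'\sim M$) or $M\ll M^{\prime\prime}$ (forcing $M'\sim M^{\prime\prime}$) omits the balanced regime $M^{\prime\prime}\sim M$. Writing the output frequency as $\xi-k=\eta+(\zeta-k)$ with $|\eta|\sim M^{\prime\prime}$ and $|\zeta-k|\sim M'$, the choice $|\eta|\sim M$ and $M'\ll M$ produces output at re-centered frequency $\sim M$; so the interaction with $M^{\prime\prime}\sim M$, $M'\ll M$ is genuinely present. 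This is precisely the first case in the paper's proof (its sum over $K\ll M$, $L\sim M$), and your dichotomy should be a trichotomy. The omission also undercuts your remark that the low-high regime is ``the only essential use'' of the gap $\rho<\sigma-1$: the missing regime contributes, after weighting and summing, $\sum_{M\lesssim N^\gamma} c(M)\,M^{1-\sigma}\,\sup_{M'\ll M}c(M')^{-1}\lesssim\sum_{M\lesssim N^\gamma}M^{\rho+1-\sigma}=\sum_{M\lesssim N^\gamma} M^{-\delta}$, which converges for exactly the same reason as your low-high case and by the same mechanism ($c(M')^{-1}\le 1$ together with $\delta>0$). The repair is routine --- add the third regime and estimate it as just indicated --- but as written the case analysis is incomplete and the source estimate is not proved.
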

\begin{proof} 
Let \( c= c^{\rho,\gamma}_{k,D^{\prime\prime}} \) be as in \eqref{prelim:eq_ck} and let \( \nabla_{x,t} \) be the gradient with respect to both variables. Then, we have that
\begin{equation}\label{lin:eq_Fk_switch}
\| \nabla_{x,t} F_k \|_{L_t^\infty \Bkprime} = \| c(M) \nabla_{x,t} P_{M;k} F_k \|_{L_t^\infty \ell_M^1 L_x^2}\leq 
\| c(M) \nabla_{x,t} P_{M;k} F_k \|_{\ell_M^1 L_t^\infty L_x^2}~. 
\end{equation}
Thus, we have to control \( \| \nabla_{x,t} P_{M;k} F_k \|_{L_t^\infty L_x^2} \). 
From Duhamels formula, it follows that
\begin{align*}
&\| P_{M;k} \nabla_{x,t} F_k \|_{L_t^\infty L_x^2([0,T]\times \rthree)} \\
&\lesssim \| P_{M;k} \nabla_{x,t} W(t) (\Pk f_0,\Pk f_1) \|_{L_t^\infty L_x^2([0,T]\times \rthree)} + \| P_{M;k} \left( \nabla F_k \cdot \nabla \phi \right) \|_{L_t^1 L_x^2([0,T]\times \rthree)} \\
&\lesssim 1_{M\leq 4} \| (\Pk f_0,\Pk f_1) \|_{\dot{H}^1\times L^2} + \| P_{M;k} \left( \nabla F_k \cdot \nabla \phi \right) \|_{L_t^1 L_x^2([0,T]\times \rthree)} 
\end{align*}
Then, we estimate
\begin{align*}
&\| P_{M;k} \left( \nabla \phi \cdot \nabla F_k \right) \|_{L_t^1 L_x^2([0,T]\times \rthree)} \\
&\lesssim \Big \| \sum_{K\ll M} \sum_{L\sim M} \| \nabla P_L \phi \cdot \nabla P_{K;k} F_k \|_{L_x^2} + \sum_{K\sim M} \| \nabla \phi  \cdot \nabla P_{K;k} F_k \|_{L_x^2} + \sum_{K\sim L \gg M } \| \nabla P_L \phi \cdot \nabla P_{K;k} F_k \|_{L_x^2} \Big\|_{L_t^1([0,T])} \\
&\lesssim 1_{\scriptscriptstyle M \lesssim N^\gamma}  M^{1-\sigma} (\sup_{K\ll M}  c(K)^{-1}) \Big \|   \| \langle \nabla \rangle^{\sigma} \phi \|_{L_x^\infty} \| \nabla F_k \|_{\Bkprime}   \Big \|_{L_t^1([0,T])}
+ \Big \| \|\langle \nabla \rangle^{\sigma} \phi \|_{L_x^\infty} \sum_{K\sim M} \| \nabla P_{K;k} F_k \|_{L_x^2}  \Big \|_{L_t^1([0,T])}\\
&~~+ 1_{\scriptscriptstyle{ M\lesssim N^\gamma}}  \sup_{K\gg M} (K^{1-\sigma} c(K)^{-1})  \Big \| \| \langle \nabla \rangle^\sigma \phi \|_{L_x^\infty} \| \nabla F_k \|_{\Bkprime} \Big \|_{L_t^1([0,T])}
\end{align*}
By multiplying with \( c(M) \), summing in \( M \), and interchanging \( \ell_M^1 \) and \( L_t^1 \) in the second contribution, we obtain that 
\begin{align*}
&\| \nabla F_k \|_{L_t^\infty \Bkprime([0,T])}+ \| \partial_t F_k \|_{L_t^\infty \Bkprime([0,T])} \\
&\lesssim \Big( 1+ \sum_{M\lesssim N^\gamma} M^{1-\sigma} c(M) \sup_{K\lesssim M} c(K)^{-1} + \sum_{M\lesssim N^\gamma} c(M) \sup_{K\gg M} K^{1-\sigma} c(K)^{-1} \Big)   \Big \| \| \langle \nabla \rangle^\sigma \phi \|_{L_x^\infty} \| \nabla F_k \|_{\Bkprime} \Big \|_{L_t^1([0,T])} \\
&\lesssim \Big( 1+ \sum_{M\lesssim N^\gamma} M^{1-\sigma+\rho} + \sum_{M\lesssim N^\gamma} M^{1-\sigma} \Big)   \Big \| \| \langle \nabla \rangle^\sigma \phi \|_{L_x^\infty} \| \nabla F_k \|_{\Bkprime} \Big \|_{L_t^1([0,T])}\\
&\lesssim   \Big \| \| \langle \nabla \rangle^\sigma \phi \|_{L_x^\infty} \| \nabla F_k \|_{\Bkprime} \Big \|_{L_t^1([0,T])}~.
\end{align*}
The proposition then follows from Gronwall's inequality. For the inhomogeneous term, we also use the fundamental theorem of calculus.\\
We remark that the definition of \( c(M) \) for \( M \gtrsim N^\gamma \) does not enter in a significant way. The weight only needs to grow in  \( M \) and satisfy a local constancy condition.
\end{proof}
\begin{cor} Under the same conditions as in Proposition \ref{lin:prop_freq_envelope}, we have that
\begin{equation*}
\begin{aligned}
&\| \nabla F_{n,k} \|_{\ell_k^2 L_t^\infty \Bkprime} + \| \partial_t F_{n,k} \|_{\ell_k^2 L_t^\infty \Bkprime}+ \|  F_{n,k} \|_{\ell_k^2 L_t^\infty \Bkprime}\\
& \lesssim \| (\widetilde{P}_N f_0, \widetilde{P}_N f_1) \|_{H^1\times L^2}  \exp( C_{\sigma,\rho,\gamma,D} \| \langle \nabla \rangle^{\sigma}  \phi \|_{L_t^1 L_x^\infty([0,T]\times \rthree)} )~. 
\end{aligned}
\end{equation*}
\end{cor}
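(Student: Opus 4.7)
The plan is to apply Proposition \ref{lin:prop_freq_envelope} pointwise in $k$ to each $F_{n,k}$ and then sum the resulting bounds in $\ell_k^2$.

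First, I would fix $k \in \mathbb{Z}^3$ with $\|k\|_2 \sim N$ and apply Proposition \ref{lin:prop_freq_envelope} to $F_{n,k}$, as defined by \eqref{it:eq_truncated_Fnk}, with $\phi(s,x) := \theta_{F,w;\leq n-1}(s)\, P_{\leq N^\gamma} u_{n-1}(s,x)$. The spatial Fourier support of this $\phi$ is contained in $\{\|\xi\|_2 \lesssim N^\gamma\}$, and the initial data is $(P_k f_0, P_k f_1)$, so the hypotheses of the proposition are met. Consequently, for every such $k$,
\begin{equation*}
\| \nabla F_{n,k} \|_{L_t^\infty \Bkprime} + \| \partial_t F_{n,k} \|_{L_t^\infty \Bkprime} + \| F_{n,k} \|_{L_t^\infty \Bkprime} \lesssim \| (P_k f_0, P_k f_1) \|_{H^1 \times L^2} \cdot E,
\end{equation*}
where $E := \exp\bigl( C_{\sigma,\rho,\gamma,D} \| \langle \nabla \rangle^\sigma \phi \|_{L_t^1 L_x^\infty([0,T]\times \rthree)} \bigr)$ is independent of $k$.

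Next, I would square this pointwise inequality and sum over $k$ in the annulus $\|k\|_2 \sim N$. Since the factor $E$ does not depend on $k$, it may be pulled outside the sum; the left-hand side then becomes (up to the triangle inequality) the square of the three $\ell_k^2 L_t^\infty \Bkprime$ norms appearing in the statement. For the right-hand side, I would invoke the almost orthogonality of the re-centered Littlewood-Paley pieces $\{P_k f\}_{k\in \mathbb{Z}^3}$: each $P_k f$ has Fourier support in a translate of a ball of radius $2$ about $k$, the translates $\psi_1(\cdot - k) = \varphi(\cdot - k)$ take values in $[0,1]$ and form a partition of unity, and for $\|k\|_2 \sim N$ one has $P_k f = P_k \widetilde{P}_N f$. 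Plancherel's theorem then yields
\begin{equation*}
\sum_{\|k\|_2 \sim N} \| (P_k f_0, P_k f_1) \|_{H^1 \times L^2}^2 \lesssim \| ( \widetilde{P}_N f_0, \widetilde{P}_N f_1 ) \|_{H^1 \times L^2}^2.
\end{equation*}

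Taking square roots and combining the three contributions via the triangle inequality for $\ell_k^2 L_t^\infty \Bkprime$ gives the claimed estimate. I do not anticipate any serious obstacle here: the corollary is a direct repackaging of the individual-frequency bound from Proposition \ref{lin:prop_freq_envelope} together with the standard almost orthogonality of the Wiener cubes, and the uniformity in $k$ of the constants in the exponent is already built into that proposition.
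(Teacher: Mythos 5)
Your proposal is correct and follows essentially the same route as the paper: the paper's proof consists precisely of applying Proposition \ref{lin:prop_freq_envelope} to each $F_{n,k}$ (with the $k$-independent exponential factor) and then invoking the almost-orthogonality bound $\| (P_k f_0, P_k f_1) \|_{\ell_{\|k\|_2\sim N}^2(\dot H^1\times L^2)} \lesssim \| (\widetilde P_N f_0, \widetilde P_N f_1)\|_{\dot H^1\times L^2}$. Your write-up just spells out the details the paper leaves implicit.
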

\begin{proof}
This follows directly from Proposition \ref{lin:prop_freq_envelope} and \begin{equation*}
\| (\Pk f_0, \Pk f_1) \|_{\ell_{\| k\|_2 \sim N}^2 ( \dot{H}_x^1 \times L_x^2)} \lesssim \| (\widetilde{P}_N f_0, \widetilde{P}_N f_1) \|_{\dot{H}^1\times L^2}~. 
\end{equation*}
\end{proof}

Before we move on to the probabilistic Strichartz estimate, we also record the following estimate for \( F_n^\omega \).

\begin{cor}[Frequency localization of \( F_n^\omega \)] \label{lin:cor_frequency_Fnomega}
Let \( F_n^\omega \) be a solution of \eqref{it:eq_truncated_Fn}, let \( s \geq 1 \), and let \( D^\prime >0\). Then, we have that
\begin{equation}\label{lin:eq_frequency_Fnomega}
\| \langle \nabla \rangle^s F_n^\omega \|_{\XNprime([0,1])} + \| \langle \nabla \rangle^{s-1} \partial_t F_n^\omega \|_{\XNprime([0,1])} \lesssim \| (Q_N f_0^\omega, Q_N f_1^\omega ) \|_{H_x^s\times H_x^{s-1}} ~. 
\end{equation}
\end{cor}
\mbox{Corollary  \ref{lin:cor_frequency_Fnomega}} is a direct consequence of the work of Geba and Tataru, see \cite[Proposition 3.1]{TG05}. Since the principal symbol of \eqref{it:eq_truncated_Fn} has constant coefficients, we present a simpler and self-contained argument. We remark that the \( L_t^\infty\)-norm prevents us from using Khintchine's inequality, since this would lead to an \( N^{\epsilon}\)-loss, see \cite[Remark 3.8]{Bringmann18}.
\begin{proof} 
The proof relies on the energy method and Proposition \ref{lin:prop_freq_envelope}.
First, we prove the energy estimate
\begin{equation}\label{lin:eq_frequency_Fnomega_proof_1}
\| \nabla F_n^\omega \|_{L_t^\infty L_x^2} +\| \partial_t F_n^\omega \|_{L_t^\infty L_x^2} \lesssim \| (Q_N f_0^\omega, Q_N f_1^\omega) \|_{\dot{H}_x^1 \times L_x^2}~.
\end{equation}
Let \( \phi(t):= \theta_{F,w;\leq n-1}(t) P_{\leq N^\gamma} u_{n-1}(t) \).  Then, we have that 
\begin{equation*}
\frac{\mathrm{d}}{\mathrm{d}t} \frac{1}{2} \int_{\mathbb{R}^3} |\nabla F_n^\omega|^2 + (\partial_t F_n^\omega)^2 \dx = - 2 \int_{\mathbb{R}^3} \partial_t F_n^\omega~ \nabla \phi \cdot \nabla F_n^\omega \dx \leq \| \nabla \phi \|_{L_x^\infty} \int_{\mathbb{R}^3}  |\nabla F_n^\omega|^2 + (\partial_t F_n^\omega)^2 \dx~. 
\end{equation*}
The energy estimate \eqref{lin:eq_frequency_Fnomega_proof_1} then follows from the definition of \( \theta_{F,w;\leq n-1} \) and Gronwall's inequality. We now turn to the proof of \eqref{lin:eq_frequency_Fnomega}. 
For this, it suffices to show that 
\begin{equation}\label{lin:eq_frequency_Fnomega_proof_2}
c_{N,D^\prime}(M) \big( M^{s-1} \| \langle \nabla \rangle P_M F_n^\omega \|_{L_t^\infty L_x^2} + M^{s-1} \| \partial_t P_M F_n^\omega \|_{L_t^\infty L_x^2} \big)\lesssim N^{s-1} \| (Q_N f_0^\omega , Q_N f_1^\omega) \|_{H_x^1 \times L_x^2} ~. 
\end{equation}
If \( M \sim N \), then \eqref{lin:eq_frequency_Fnomega_proof_2} follows from \eqref{lin:eq_frequency_Fnomega_proof_1}. If \( M \not \sim N \), then \( \| k\|_2 \sim N \) implies that  
\begin{align*}
 &\| P_M \langle \nabla \rangle F_{n,k}\|_{L_t^\infty L_x^2} + \| P_M \partial_t F_n^\omega \|_{L_t^\infty L_x^2} \\
 & \lesssim \left(\frac{\max(N,M)}{N^\gamma} \right)^{-D^{\prime\prime}} \left( \| \langle \nabla \rangle F_{n,k} \|_{L_t^\infty \Bkprime ([0,T]\times \rthree)}+  \| \partial_t F_{n,k} \|_{L_t^\infty \Bkprime ([0,T]\times \rthree)} \right)
\end{align*}
By choosing \( D^{\prime\prime} > 0 \) large enough, it follows from Proposition \ref{lin:prop_freq_envelope} that
\begin{align*}
&\| P_M \langle \nabla \rangle F_n^\omega \|_{L_t^\infty L_x^2} + \| P_M \partial_t F_n^\omega \|_{L_t^\infty L_x^2} \\
&\leq \sum_{N/2 \leq \| k \|_2 < N} |g_k| \left( \| P_M \langle \nabla \rangle F_{n,k}\|_{L_t^\infty L_x^2} + \| P_M \partial_t F_{n,k} \|_{L_t^\infty L_x^2} \right) \\
&\lesssim (MN)^{-4D^\prime} \hspace{-2ex} \sum_{N/2\leq \| k\|_2 < N} |g_k| \big( \| \langle \nabla \rangle F_{n,k} \|_{L_t^\infty \Bkprime ([0,T]\times \rthree)}+ \| \partial_t F_{n,k} \|_{L_t^\infty \Bkprime ([0,T]\times \rthree)} \big)\\
&\lesssim (MN)^{-4D^\prime} \sum_{N/2\leq \| k\|_2 < N}  |g_k|  \| (\Pk f_0 , \Pk f_1) \|_{H^1_x\times L^2_x}\\
&\lesssim (MN)^{-4D^\prime} N^{\frac{3}{2}} \Big( \sum_{N/2\leq \| k\|_2 < N}  |g_k|^2  \| (\Pk f_0 , \Pk f_1) \|_{H^1_x\times L^2_x}^2\Big)^{\frac{1}{2}} \\
&\lesssim (MN)^{-2D^\prime} \| (Q_N f_0^\omega , Q_N f_1^\omega) \|_{H_x^1 \times L_x^2} ~. 
\end{align*}
This estimate is stronger than \eqref{lin:eq_frequency_Fnomega_proof_2}, and it completes the proof. 
\end{proof}

\begin{prop}[Probabilistic Strichartz Estimates]\label{lin:prop_probabilistic_strichartz}
Let \( F_n^\omega \) be a solution of \eqref{it:eq_truncated_Fn}, let \( s>1 \), \( \sigma^\prime > \sigma > 1 \), and let \( D^\prime > 0 \).  Let \( \delta > 0 \) be as in Proposition \ref{lin:prop_freq_envelope}. Furthermore, we assume that \begin{equation}\label{lin:eq_consistency_condition_strichartz}
\sigma < \frac{3}{2}~. 
\end{equation}
Then, it holds for all \( 0 < T \leq 1 \) and all \( r \geq 1 \) that
\begin{equation}\label{lin:eq_probabilistic_strichartz}
\begin{aligned}
&\| \langle\nabla\rangle^{\sigma^\prime} F_n^\omega \|_{L_\omega^r \SNprime(\Omega \times [0,T])} \\
&\lesssim \sqrt{r} T^\frac{1}{2} N^{2\delta} \| (\widetilde{P}_N f_0, \widetilde{P}_N f_1) \|_{H_x^{\sigma^\prime}(\rthree)\times H_x^{\sigma^\prime-1}(\rthree)} \\
&+ \sqrt{r} T^{\frac{1}{2}} N^{\sigma^\prime -s +1- \gamma (\sigma-1) - \frac{1}{2} (1-\gamma) + 2\delta }  \| (\widetilde{P}_N f_0, \widetilde{P}_N f_1) \|_{H_x^s(\rthree)\times H_x^{s-1}(\rthree)}
\end{aligned}
\end{equation}
\end{prop}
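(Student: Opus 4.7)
The plan is to exploit the linearity of \eqref{it:eq_truncated_Fn} together with the independence structure of the Wiener randomization. By \eqref{it:eq_truncated_Fnk} I write
\[
F_n^\omega = \sum_{N/2 \leq \|k\|_2 < N} g_k(\omega)\, F_{n,k},
\]
and observe that each $F_{n,k}$ is measurable with respect to $\mathcal{G} := \sigma(g_j : \|j\|_2 < N/2)$, since $u_{n-1}$ and all the truncation cutoffs $\theta_\cdot$ are built only from these earlier Gaussians. Crucially, $\mathcal{G}$ is independent of the family $\{g_k : N/2 \leq \|k\|_2 < N\}$, so conditional on $\mathcal{G}$ the expansion above is a genuine Gaussian chaos of order one. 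Khintchine's inequality thus gives pointwise in $(t,x)$
\[
\|F_n^\omega(t,x)\|_{L^r_\omega} \leq C\sqrt{r}\, \Big\|\Big(\sum_k |F_{n,k}(t,x)|^2\Big)^{1/2}\Big\|_{L^r_\omega}.
\]
To handle the $L^\infty_x$ endpoint of Strichartz I apply Bernstein to replace $L^\infty_x$ by $L^p_x$ for large but finite $p$, paying $M^{3/p} \leq N^\delta$ on each block $P_M$, and then use Minkowski (valid for $r \geq \max\{2,p\}$) to swap $L^r_\omega$ past $L^2_t L^p_x$ and past the $\ell^2_k$ square function. This reduces the problem to pathwise estimates of the form $\sqrt r\,\big(\sum_k \|\bra^{\sigma'} P_M F_{n,k}\|_{L^2_t L^p_x}^2\big)^{1/2}$, summed against the $\SNprime$-weight $c_{N,D'}(M)$.

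For each $k$ with $\|k\|_2 \sim N$ I split $F_{n,k}$ via Duhamel into the homogeneous evolution $W(t)(P_k f_0, P_k f_1)$ and a variable-coefficient Duhamel correction. For the homogeneous piece, $P_k f$ lives in a cube of side $\sim 1$ around $k$ (which the wave flow preserves), so Bernstein on a unit cube together with energy conservation gives $\|W(t) P_k f\|_{L^\infty_t L^p_x} \lesssim \|P_k f_0\|_{L^2} + N^{-1}\|P_k f_1\|_{L^2}$. Hölder in time produces the $T^{1/2}$-factor, and summing in $\ell^2_k$ together with $\bra^{\sigma'} \sim N^{\sigma'}$ reproduces the first term of \eqref{lin:eq_probabilistic_strichartz}. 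For the Duhamel correction, Proposition \ref{lin:prop_freq_envelope} applied with $\phi := \theta_{F,w;\leq n-1}\, P_{\leq N^\gamma} u_{n-1}$ (whose $\bra^\sigma$-norm in $L^1_t L^\infty_x$ is bounded by the truncation) guarantees that $F_{n,k}$, and hence the source $\nabla \phi \cdot \nabla F_{n,k}$, is essentially frequency-supported in cubes of size $\lesssim N^\gamma$ centered at $k$. I apply the refined Strichartz estimate (Lemma \ref{prelim:lem_refined_strichartz}) to the cube-localized projections $P_{M';k}$ with $M' \lesssim N^\gamma$, extracting the decisive gain $(M'/N)^{1/2-1/p} \lesssim N^{-(1-\gamma)/2 + \delta}$. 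Hölder decouples the source as $\|\nabla \phi\|_{L^1_t L^\infty_x}\|\nabla F_{n,k}\|_{L^\infty_t L^2_x}$; the first factor is controlled by inverse Bernstein as $\|\nabla \phi\|_{L^1_t L^\infty_x} \lesssim N^{-\gamma(\sigma-1)} \|\bra^\sigma \phi\|_{L^1_t L^\infty_x} \lesssim N^{-\gamma(\sigma-1)}$, while Proposition \ref{lin:prop_freq_envelope} supplies the uniform-in-$\omega$ bound $\|\nabla F_{n,k}\|_{L^\infty_t L^2_x} \lesssim \|(P_k f_0, P_k f_1)\|_{H^1 \times L^2}$. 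Multiplying these factors, inserting $\bra^{\sigma'} \sim N^{\sigma'}$, taking $\ell^2$ in $k$ via Plancherel, and rewriting $N\|\widetilde P_N f_0\|_{L^2} + \|\widetilde P_N f_1\|_{L^2} \sim N^{1-s}\|(\widetilde P_N f_0, \widetilde P_N f_1)\|_{H^s \times H^{s-1}}$ reproduces exactly the exponent $\sigma' - s + 1 - \gamma(\sigma-1) - (1-\gamma)/2 + 2\delta$ of the second term.

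The main technical obstacle I anticipate is the dyadic bookkeeping needed to reconcile the standard Littlewood-Paley blocks $P_M$ of the $\SNprime$-norm with the cube-centered blocks $P_{M';k}$ supplied by Proposition \ref{lin:prop_freq_envelope}. Since $P_{M';k} F_{n,k}$ is supported in $|\xi - k| \sim M'$, the composition $P_M P_{M';k}$ vanishes unless $M \sim \max(N, M')$: for $M \sim N$ one must sum refined Strichartz contributions over all scales $M' \lesssim N$, whereas for $M \gg N$ only the diagonal $M' \sim M$ contributes, with rapid decay supplied by the large $D''$-weight in $c^{\rho,\gamma}_{k,D''}$. The consistency condition $\sigma < 3/2$ in \eqref{lin:eq_consistency_condition_strichartz} enters precisely at this summation step, ensuring that $\sum_{M' \lesssim N^\gamma} M'^\rho$ with $\rho = \sigma - 1 - \delta$ combines favorably with the refined Strichartz gain to keep all of the dyadic sums convergent.
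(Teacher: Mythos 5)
Your proposal follows essentially the same route as the paper's proof: decompose $F_n^\omega = \sum_k g_k F_{n,k}$ with $F_{n,k}$ measurable with respect to the earlier Gaussians, apply conditional Khintchine and Minkowski (at a finite exponent $p$ reached by Bernstein, costing the $N^{\delta}$ factors), and estimate each $F_{n,k}$ by splitting Duhamel's formula, using Proposition \ref{lin:prop_freq_envelope} to localize the source near the cube at $k$ and the Klainerman--Tataru refined Strichartz estimate to extract the $N^{-(1-\gamma)/2}$ gain, with $\sigma<3/2$ entering in the dyadic summation exactly as you describe. The argument is correct and matches the paper's Steps 1--3.
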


\begin{rem}
The power on \( N \) in the estimate above can be motivated by writing
\begin{equation*}
N^{\sigma^\prime -s +1- \gamma (\sigma-1) - \frac{1}{2} (1-\gamma) }= \underbrace{N^{\sigma^\prime-s}}_{\substack{\text{difference of}\\\text{derivatives}}} \cdot \underbrace{N^1}_{\substack{\text{deterministic}\\ \text{Strichartz}}} \cdot \underbrace{{N^{-\gamma(\sigma-1)}}}_{\substack{\text{gain frequency}\\ \text{localization}}} \cdot \underbrace{N^{-\frac{1}{2}(1-\gamma)}}_{\substack{\text{gain refined}\\\text{Strichartz}}}~.
\end{equation*}
The nearly optimal choice of the parameters leads to \( \sigma^\prime = 1.13205 \), see \eqref{fin:eq_parameters}. From \eqref{fin:eq_parameters}, we also have that \( {\sigma= \nu-1-=1.1001-} \), and thus the random evolution \( F_n^\omega \) has a higher number of derivatives bounded in \( L_t^2 L_x^\infty \) than \( w_n \).
\end{rem}
\begin{proof}[Proof of Proposition \ref{lin:prop_probabilistic_strichartz}:] Let \( (q,p) =(2+,\infty-) \) be a sharp wave-admissible Strichartz pair. During this proof, it is convenient to define
\begin{equation*}
\| u \|_{\SNqpprime([0,T])} := \sum_{M\geq 1} c_{N,D^\prime}(M) ~  \| P_M u \|_{L_t^q L_x^p([0,T]\times \rthree)}~. 
\end{equation*}

We separate the proof in three steps: \\

\textbf{Step 1: Estimate for the individual \( F_{n,k} \).}~ \\
Since \( F_n^\omega \) is a random linear combination of the \( F_{n,k} \), we need to control their Strichartz-type norms. To this end, let \( F_k \) be a solution of \eqref{lin:eq_Fk} and assume that \( \phi \) has frequency support inside the ball \( \| \xi \|_2 \leq N^\gamma \). We prove that for any \( D^\prime > 0 \) there exists a \(D^{\prime\prime}>0 \) s.t.
\begin{equation}\label{lin:eq_refined_strichartz}
\begin{aligned}
\| \langle \nabla \rangle^{\sigma^\prime} F_k \|_{\SNqpprime([0,T])}
 &\lesssim T^{\frac{1}{q}}  \| (\Pk f_0,\Pk f_1) \|_{H_x^{\sigma^\prime} \times H_x^{\sigma^\prime-1}} \\
 &+  N^{\sigma^\prime-\gamma(\sigma-1)-\frac{1}{2}(1-\gamma)+\delta }  \| \langle \nabla \rangle^\sigma \phi \|_{L_t^1 L_x^\infty( \sIT)} \| \nabla F_k\|_{L_t^\infty \Bktil ([0,T])}~.
\end{aligned}
\end{equation}
Using Hölder's inequality and Bernstein's estimate, we have that
\begin{equation*}
\| \langle \nabla \rangle^{\sigma^\prime} P_M W(t) ( \Pk f_0, \Pk f_1) \|_{L_t^q L_x^p}  
\lesssim T^{\frac{1}{q}} \IMN   \| (\Pk f_0, \Pk f_1) \|_{H_x^{\sigma^\prime}\times H_x^{\sigma^\prime-1}}~. 
\end{equation*}
This leads to the first summand on the right-hand side of \eqref{lin:eq_refined_strichartz}. Next, we control the Duhamel term. By using a Littlewood-Paley decomposition, we obtain that
\begin{align*}
&\| \langle \nabla \rangle^{\sigma^\prime} P_M \int_0^t \frac{\sin((t-t^\prime)|\nabla|)}{|\nabla|} \nabla \phi \cdot \nabla F_k \dtp \|_{L_t^q L_x^p} \\
&\lesssim \sum_{L \leq N^\gamma} \sum_{K \ll N} \| \langle \nabla \rangle^{\sigma^\prime} P_M \int_0^t \frac{\sin((t-t^\prime)|\nabla|)}{|\nabla|} \nabla P_L \phi \cdot \nabla P_{K;k} F_k \dtp \|_{L_t^q L_x^p} \\
&~~+ \sum_{K\gtrsim N} \| \langle \nabla \rangle^{\sigma^\prime} P_M \int_0^t \frac{\sin((t-t^\prime)|\nabla|)}{|\nabla|} \nabla  \phi \cdot \nabla  P_{K;k} F_k \dtp \|_{L_t^q L_x^p}~. 
\end{align*}
We first control the contribution of the main term \( K \ll N \). Using the refined Strichartz estimates (Lemma \ref{prelim:lem_refined_strichartz}), it holds that 
\begin{align*}
&\sum_{ L \leq N^\gamma} \sum_{K \ll N} \| \langle \nabla \rangle^{\sigma^\prime} P_M \int_0^t \frac{\sin((t-t^\prime)|\nabla|)}{|\nabla|} \nabla P_L \phi \cdot \nabla P_{K;k} F_k \dtp \|_{L_t^q L_x^p}\\
&\lesssim M^{\sigma^\prime} \sum_{L \leq N^\gamma} \sum_{K \ll N}  \bigg( \frac{\max(L,K)}{N} \bigg)^{\frac{1}{2}-} \| P_M( \nabla P_L \phi \cdot \nabla P_{K;k} F_k) \|_{L_t^1 L_x^2} \\
&\lesssim 1_{M\sim N} M^{\sigma^\prime}  \sum_{L \leq N^\gamma} \sum_{K \ll N} \bigg( \frac{\max(L,K)}{N} \bigg)^{\frac{1}{2}-}  \| \nabla P_L \phi \|_{L_t^1 L_x^\infty} \| \nabla P_{K;k} F_k \|_{L_t^\infty L_x^2} \\
&\lesssim 1_{M\sim N} N^{\sigma^\prime-\frac{1}{2}+} \bigg( \sum_{L \leq N^\gamma} \sum_{K \ll N}   \max(L,K)^{\frac{1}{2}}  L^{1-\sigma} K^{1-\sigma+\delta} \max\Big( 1, \frac{K}{N^\gamma}\Big)^{-D^{\prime\prime}} \bigg) \| \langle \nabla \rangle^\sigma \phi \|_{L_t^1 L_x^\infty}\| \nabla F_k\|_{L_t^\infty \Bktil}
\end{align*}
To complete the estimate for \( K \ll N \), it only remains to evaluate the sum in \( L \) and \( K \). We have that
\begin{align*}
&N^{\sigma^\prime-\frac{1}{2}+}  \sum_{L \leq N^\gamma} \sum_{K \ll N}    L^{1-\sigma} K^{1-\sigma+\delta} \\
&\lesssim N^{\sigma^\prime-\frac{1}{2}+} \sum_{L,K \leq N^\gamma }  L^{1-\sigma} K^{\frac{3}{2}-\sigma+\delta} + N^{\sigma^\prime-\frac{1}{2}+} \bigg(\sum_{L \leq N^\gamma}L^{1-\sigma} \bigg)\cdot \bigg( \sum_{N^\gamma \leq K \ll N} K^{\frac{3}{2}-\sigma+\delta} \bigg( \frac{K}{N^\gamma} \bigg)^{-D^{\prime\prime}}  ~ \bigg)\\
&\lesssim N^{\sigma^\prime- \gamma(1-\sigma) - \frac{1}{2} (1-\gamma)+\delta}~. 
\end{align*}
We now control the contribution for \( K \gtrsim N \). Since \( D^{\prime\prime} \) can be chosen sufficiently large, it holds that 
\begin{align*}
&\sum_{K\gtrsim N} \| \langle \nabla \rangle^{\sigma^\prime} P_M \int_0^t \frac{\sin((t-t^\prime)|\nabla|)}{|\nabla|} \nabla  \phi \cdot \nabla  P_{K;k} F_k \dtp \|_{L_t^q L_x^p} \\
&\lesssim M^{\sigma^\prime} \sum_{K\gtrsim N} \| P_M ( \nabla \phi \cdot \nabla P_{K;k} F) \|_{L_t^1 L_x^2} \\
&\lesssim M^{\sigma^\prime} \sum_{K\gtrsim \max(N,M)}  \| \nabla \phi \|_{L_t^1 L_x^\infty} \| \nabla P_{K;k} F_k \|_{L_t^\infty L_x^2} \\
&\lesssim M^{\sigma^\prime}  \bigg(\sum_{K\gtrsim \max(N,M)} K^{1-\sigma+\delta} \bigg( \frac{K}{N^\gamma} \bigg)^{-D^{\prime\prime}} \bigg) \| \nabla \phi \|_{L_t^1 L_x^\infty} \| \nabla F_k\|_{L_t^\infty \Bktil}\\
&\lesssim (NM)^{-10 (D^\prime+1)} \| \langle \nabla \rangle^{\sigma} \phi \|_{L_t^1 L_x^\infty}  \| \nabla F_k\|_{L_t^\infty \Bktil} ~. 
\end{align*}
This completes the proof of \eqref{lin:eq_refined_strichartz}, which we now apply to the functions \( F_{n,k} \).
Due to the cutoff, \( \sigma^\prime > \sigma\), and \eqref{it:eq_triangle}, we have that
\begin{equation*}
\| \langle \nabla \rangle^{\sigma} \left( \theta_{F,w;\leq n-1}(t) u_{n-1}(t,x) \right) \|_{L_t^2 L_x^\infty(\mathbb{R}\times \rthree)} \lesssim 1~. 
\end{equation*}
Thus, it follows from \eqref{lin:eq_refined_strichartz} and  Proposition \ref{lin:prop_freq_envelope} that
\begin{equation}\label{lin:eq_refined_strichartz_Fnk}
\begin{aligned}
&\| \langle \nabla \rangle^{\sigma^\prime} F_{n,k} \|_{\SNqpprime} \\
&\lesssim  T^\frac{1}{q} \| (\Pk f_0, \Pk f_1) \|_{H_x^{\sigma^\prime}\times H_x^{\sigma^\prime-1}} +  T^{\frac{1}{2}}N^{\sigma^\prime-\frac{1}{2} (1-\gamma) + \gamma (1-\sigma) + \delta + } \| \nabla F_{n,k} \|_{L_t^\infty \Bktil([0,T])}  \\
&\lesssim T^\frac{1}{q} \| (\Pk f_0, \Pk f_1) \|_{H_x^{\sigma^\prime}\times H_x^{\sigma^\prime-1}} 
+  T^{\frac{1}{2}} N^{\sigma^\prime-\frac{1}{2} (1-\gamma) + \gamma (1-\sigma) + \delta + } \| ( \Pk f_0 , \Pk f_1 ) \|_{H_x^1\times L_x^2}  \\
&\lesssim T^\frac{1}{q} \| (\Pk f_0, \Pk f_1) \|_{H_x^{\sigma^\prime}\times H_x^{\sigma^\prime-1}} 
+  T^{\frac{1}{2}} N^{\sigma^\prime-s+1-\frac{1}{2} (1-\gamma) + \gamma (1-\sigma) + \delta + } \| ( \Pk f_0 , \Pk f_1 ) \|_{H_x^s\times H_x^{s-1}} 
\end{aligned}
\end{equation}

\textbf{Step 2: Probabilistic Decoupling in \( \SNqpprime\).} ~\\
In this step, we use \eqref{lin:eq_refined_strichartz_Fnk} to prove the analog of \eqref{lin:eq_probabilistic_strichartz} in \( \SNqpprime \). More precisely, we prove that 
\begin{equation} \label{lin:eq_prob_strichartz_qp}
\begin{aligned}
\| \langle \nabla \rangle^{\sigma^\prime} F_n^\omega \|_{L_\omega^r \SNqpprime} &\lesssim \sqrt{r} T^\frac{1}{q} \|(\widetilde{P}_N f_0, \widetilde{P}_N f_1)\|_{H_x^{\sigma^\prime}(\rthree)\times H_x^{\sigma^\prime-1}(\rthree)} \\
&~~+  \sqrt{r} T^{\frac{1}{2}} N^{\sigma^\prime-s+1-\frac{1}{2} (1-\gamma) + \gamma (1-\sigma) + \delta + } \| (\widetilde{P}_N f_0, \widetilde{P}_N f_1) \|_{H_x^s(\rthree)\times H_x^{s-1}(\rthree)} ~.
\end{aligned}
\end{equation}
We use a standard combination of Khintchine's inequality and Minkowski's integral inequality to extend the estimate from \( F_{n,k} \) to  \( F_n^\omega \), see e.g. \cite{BOP15,LM16}.

In the equations below, we let \( k\in \mathbb{Z}^3 \) be in the annulus
\( N/2 \leq \| k \|_2 < N \). Recall that the functions \( F_{n,k} \) are measurable with respect to the sigma-algebra \( \mathscr{F}_{n-1}  = \sigma( g_l \colon \| l \|_2 < N/2 ) \), and that the random variables \( \{ g_k \colon N/2\leq \|k\|_2 < N \} \) are independent of \( \mathscr{F}_{n-1} \). By conditioning on \( \mathscr{F}_{n-1} \), we obtain that

\begin{equation*}
\| \langle \nabla \rangle^{\sigma^\prime} F_n^\omega \|_{L_\omega^r \SNqpprime}= \mathbb{E}\Big[ \mathbb{E} \Big[ \| c_{N,D^\prime}(M) \sum_{k} g_k \langle \nabla \rangle^{\sigma^\prime} P_M F_{n,k} \|_{\ell_M^1 L_t^q  L_x^p}^r\Big| \mathscr{F}_{n-1} \Big] \Big] ^{\frac{1}{r}} 
\end{equation*}
From Minkowski's integral inequality and Khintchine's inequality, we obtain for all \( r \geq \max(q,p) \) that
\begin{align*}
& \mathbb{E}\Big[ \mathbb{E} \Big[ \| c_{N,D^\prime}(M) \sum_{k} g_k \langle \nabla \rangle^{\sigma^\prime} P_M F_{n,k} \|_{\ell_M^1 L_t^q  L_x^p}^r\Big| \mathscr{F}_{n-1} \Big] \Big] ^{\frac{1}{r}} \notag\\
&\leq \mathbb{E}\Big[ \Big\| \mathbb{E} \Big[ \big| c_{N,D^\prime}(M) \sum_{k} g_k \langle \nabla \rangle^{\sigma^\prime} P_M F_{n,k} \big|^r \Big| \mathscr{F}_{n-1}\Big]^{\frac{1}{r}} \Big\|_{\ell_M^1 L_t^q  L_x^p}^r \Big] ^{\frac{1}{r}} \notag\\
& \lesssim\sqrt{r}  \mathbb{E} \Big[ \| c_{N,D^\prime}(M)  \langle \nabla \rangle^{\sigma^\prime} P_M F_{n,k} \|_{ \ell_M^1 L_t^q L_x^p\ell_k^2}^r \Big] ^{\frac{1}{r}} \notag\\
&=\sqrt{r} \| c_{N;D^\prime}(M) \langle \nabla \rangle^{\sigma^\prime} P_M F_{n,k} \|_{L_\omega^r \ell_M^1 L_t^q L_x^p \ell_k^2}
\end{align*}
In order to use Minkowski's integral inequality again, we need to move from \( \ell_M^1 \) to \( \ell_M^2 \). Using \eqref{lin:eq_refined_strichartz_Fnk} with \( D^\prime+1\) instead of \( D^\prime \), we may increase the weight from \( c_{N;D^\prime} \) to \( c_{N;D^\prime+1} \). Then, it follows that
\begin{align*}
&\sqrt{r} \| c_{N;D^\prime}(M) \langle \nabla \rangle^{\sigma^\prime} P_M F_{n,k} \|_{L_\omega^r \ell_M^1 L_t^q L_x^p \ell_k^2}\\
&\lesssim \sqrt{r} \| c_{N,D^\prime+1}(M) \langle \nabla \rangle^{\sigma^\prime} P_M F_{n,k} \|_{L_\omega^r \ell_M^2 L_t^q  L_x^p\ell_k^2} \\
&\lesssim \sqrt{r} \| c_{N,D^\prime+1}(M) \langle \nabla \rangle^{\sigma^\prime} P_M F_{n,k} \|_{\ell_k^2 L_\omega^r\ell_M^2 L_t^q  L_x^p} \\
& \lesssim\sqrt{r}  \|   \langle \nabla \rangle^{\sigma^\prime} F_{n,k} \|_{\ell_k^2 L_\omega^r  \SNqpplus} \notag\\
&\lesssim \sqrt{r} T^\frac{1}{q} \|(\widetilde{P}_N f_0, \widetilde{P}_N f_1)\|_{H_x^{\sigma^\prime}(\rthree)\times H_x^{\sigma^\prime-1}(\rthree)} \notag\\
&~~+  \sqrt{r} T^{\frac{1}{2}} N^{\sigma^\prime-s+1-\frac{1}{2} (1-\gamma) + \gamma (1-\sigma) + \delta + } \| (\widetilde{P}_N f_0, \widetilde{P}_N f_1) \|_{H_x^s(\rthree)\times H_x^{s-1}(\rthree)} ~.\notag
\end{align*}
The same estimate for \(1\leq r\leq \max(q,p) \) then follows by using Hölder's inequality in \( \omega \). This completes the proof of \eqref{lin:eq_prob_strichartz_qp}.\\

\textbf{Step 3: Moving from \((2,\infty)\) to \( (q,p) \).} ~\\
Using Bernstein's estimate, we have that
\begin{align*}
\| \langle \nabla \rangle^{\sigma^\prime} F_n^\omega \|_{\SNprime}
&\leq \sum_{L\geq 1} c_{N,D^\prime}(L) \| \langle \nabla \rangle^{\sigma^\prime} P_L F_n^\omega \|_{L_t^2 L_x^\infty} \\
&\lesssim T^{\frac{1}{2}-\frac{1}{q}} \sum_{L\geq 1} L^{0+} c_{N,D^\prime}(L)  \| \langle \nabla \rangle^{\sigma^\prime} P_L F_n^\omega \|_{L_t^q L_x^p}\\
&\lesssim T^{\frac{1}{2}-\frac{1}{q}} \sup_{L\geq 1} \Big(L^{0+} \max \Big( \frac{N}{L}, \frac{L}{N}\Big)^{-1} \Big) \| \langle \nabla \rangle^{\sigma^\prime} F_n^\omega \|_{\SNqpplus} \\
&\lesssim  T^{\frac{1}{2}-\frac{1}{q}} N^{0+} \| \langle \nabla \rangle^{\sigma^\prime} F_n^\omega \|_{\SNqpplus}~. 
\end{align*}
Then, the proposition follows from \eqref{lin:eq_prob_strichartz_qp}, where \( D^\prime \) is replaced by \( D^\prime+1\).
\end{proof}

\section{The nonlinear evolution \( w_n \)}
Recall that the nonlinear evolution \( w_n \) solves the truncated equation
\label{nl:eq_truncated_wn}
\begin{align}
w_n(t) &= \duh  \theta_{F;n}(t^\prime) |\nabla F_n^\omega|^2\dtp  \notag \\
		&~~~+ 2 \duh \theta_{F;n}(t^\prime) \nabla F_n^\omega \nabla w_n \dtp \notag\\
		&~~~+ \duh \theta_{w;n}(t^\prime) |\nabla w_n|^2\dtp \notag\\
		&~~~+ 2 \duh \theta_{F,w;\leq n-1}(t^\prime) \nabla F_{\leq n-1}^\omega \nabla w_n \dtp \label{nl:eq_truncated_wn}\\
		&~~~+ 2 \duh \theta_{F,w;\leq n-1}(t^\prime) \nabla w_{\leq n-1} \nabla w_n \dtp \notag \\
		&~~~+ 2 \duh \theta_{F,w;\leq n-1}(t^\prime) \nabla P_{>N^\gamma} \nabla u_{n-1}\nabla F_n^\omega \dtp~\notag \\
\end{align}
The main result of this section provides control of the nonlinear component \( w_n \) in \( \YN \). 

\begin{prop}[Control of the nonlinear component \( w_n \)]\label{nl:prop_wn}
Assume that \( \nu> 2 \), \( \sigma=\nu-1- \), \( s> 1 \), \( \sigma^\prime > 1 \), and \( \max( \nu-\sigma^\prime, \sigma-1 ) < \eta < \nu - 1 \). Let \( D\geq D_0(s,\nu,\sigma^\prime,\sigma,\eta) \) and \( D^\prime\geq D_0^\prime(s,\nu,\sigma^\prime,\sigma,\eta,D) \) be sufficiently large. Furthermore, assume that \( 0<T_0 = T_0(s,\nu,\sigma^\prime,\sigma,\eta,D,D^\prime) \) is sufficiently small. \\
Then there exists a unique solution \( w_n \in \YN([0,T_0]) \) of  \eqref{nl:eq_truncated_wn}. Furthermore, we have for all \( 0 \leq T \leq T_0 \) that
\begin{equation}\label{nl:eq_a_priori}
\begin{aligned}
\| w_n \|_{\YN([0,T])}
&\lesssim T^{\frac{1}{2}} \left( N^{\nu-s-\gamma(\sigma^\prime-1)}+ N^{(1-\gamma)(\nu-1)+1-\sigma^\prime} \right) \left( \| \langle \nabla \rangle^s F_n^\omega \|_{\XNprime}+ \| \langle \nabla \rangle^{\sigma^\prime} F_n^\omega \|_{\SNprime}\right)~.
\end{aligned}
\end{equation}
\end{prop}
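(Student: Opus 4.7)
My plan is to construct $w_n$ and establish \eqref{nl:eq_a_priori} simultaneously via a contraction mapping argument in the Banach space $\YN([0,T_0])$. Let $\Phi_n$ send $w \in \YN([0,T_0])$ to the function obtained by substituting $w$ for $w_n$ in the integrands of terms $2$--$5$ of \eqref{nl:eq_truncated_wn}; fixed points of $\Phi_n$ are precisely the solutions of \eqref{nl:eq_truncated_wn}. Applying Lemma \ref{prelim:lem_strichartz} to $P_M \Phi_n(w)$ at each dyadic frequency $M$, multiplying by the weights $c_{N,\eta}(M)$ or $c_{\leq N,D}(M)$ that appear in the $\YN$-norm, and summing in $M$ reduces the $\YN([0,T])$-estimate of $\Phi_n(w)$ to estimates of each of the six inhomogeneities in an appropriate frequency-weighted $L_t^1 L_x^2$-norm with $\langle \nabla \rangle^{\nu-1}$ derivatives (and analogously for the Strichartz piece with $\langle \nabla \rangle^\sigma$). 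The analogous computation for $\Phi_n(w) - \Phi_n(w^\prime)$ will give the contraction.

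\textbf{The four terms involving $w_n$ (terms 2--5).} For each of the four bilinear integrands I would apply a Littlewood-Paley paraproduct decomposition into low-high, high-low and high-high-to-low interactions. The ``rough'' factor ($F_n^\omega$, $F_{\leq n-1}^\omega$, or $w_{\leq n-1}$) is placed in $L_t^2 L_x^\infty$ carrying $\sigma^\prime$ or $\sigma$ derivatives, while the ``smooth'' factor $w$ is placed in $L_t^\infty L_x^2$ carrying $\nu$ derivatives; H\"older in time produces the $T^{1/2}$ factor. The cutoffs $\theta_{F;n}$, $\theta_{F,w;\leq n-1}$, $\theta_{w;n}$ provide uniform-in-$\omega$ constant bounds on the rough factors (for term $3$ one writes $|\nabla w|^2 = \nabla w \cdot \nabla w$ and bounds one copy by $\theta_{w;n}$), yielding an estimate of the form $T^{1/2} \| w \|_{\YN}$. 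The paraproduct sums close precisely under the hypotheses $\max(\nu-\sigma^\prime, \sigma-1) < \eta < \nu-1$, with the upper bound handling the high-high-to-low output and the lower bound the low-high and high-low interactions; the $\XlN$ component closes for $D$ taken large. Applying the same estimate to $\Phi_n(w) - \Phi_n(w^\prime)$ produces a contraction for $T_0$ sufficiently small.

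\textbf{The two source terms (terms 1 and 6).} These must supply the right-hand side of \eqref{nl:eq_a_priori}. For term $1$, the frequency localization from Lemma \ref{lin:lem_frequency_Fnomega} implies that $|\nabla F_n^\omega|^2$ is dominated by the high-high-to-low interaction of two frequency-$\sim N$ factors, producing output at $M \lesssim N$. Placing one factor in $L_t^\infty L_x^2$ with $s$ derivatives (using $\|\langle \nabla \rangle^s F_n^\omega\|_{\XNprime}$) and the other in $L_t^2 L_x^\infty$ with $\sigma^\prime$ derivatives (using $\|\langle \nabla \rangle^{\sigma^\prime} F_n^\omega\|_{\SNprime}$), H\"older in time, and summation against $c_{N,\eta}(M)$ over the output frequencies produces the contribution $T^{1/2} N^{\nu - s - \gamma(\sigma^\prime - 1)}$, with the $-\gamma(\sigma^\prime-1)$ gain coming from optimizing the derivative distribution across the paraproduct scales. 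For term $6$, $\nabla P_{>N^\gamma} u_{n-1} \cdot \nabla F_n^\omega$, the factor $u_{n-1}$ is frequency-truncated to $\lesssim N/2$ so that $P_{>N^\gamma} u_{n-1}$ has frequencies in $(N^\gamma, N/2]$; placing $F_n^\omega$ in $L_t^2 L_x^\infty$ at $\sigma^\prime$ derivatives and $u_{n-1}$ in $L_t^\infty L_x^2$ (controlled through $u_{n-1} = F_{\leq n-1}^\omega + w_{\leq n-1}$, \eqref{it:eq_triangle}, and the cutoff $\theta_{F,w;\leq n-1}$) yields the complementary contribution $T^{1/2} N^{(1-\gamma)(\nu-1) + 1 - \sigma^\prime}$.

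\textbf{Main obstacle.} The most delicate point is the power counting in term $6$: the product of $P_{>N^\gamma} u_{n-1}$ with $\nabla F_n^\omega$ is essentially a high-high interaction producing output at frequency $\sim N$, where the $\YN$-weight $c_{N,\eta}(M)$ provides no headroom. The full cost of $\nu - 1$ derivatives on the output must be absorbed by the $\sigma^\prime$ derivatives already spent on $F_n^\omega$ together with the $N^{(1-\gamma)(\nu-1)}$ margin gained from restricting $u_{n-1}$ to frequencies $\gtrsim N^\gamma$. A secondary technical point is tracking the $\XlN$ weights with $D, D^\prime$ large enough to force all paraproduct sums to converge. Once \eqref{nl:eq_a_priori} is in hand, existence and uniqueness of $w_n \in \YN([0,T_0])$ follow from the Banach fixed point theorem for $T_0$ small.
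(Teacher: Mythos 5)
Your overall strategy---a paraproduct decomposition into low-high, high-low, and high-high interactions, bilinear estimates in the weighted spaces, and a contraction in \( \YN([0,T_0]) \)---is the same as the paper's. There are, however, two genuine gaps in the execution.

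First, the power counting for the source terms is mis-attributed, and as written your estimate of term 6 does not close. Term 1, \( |\nabla F_n^\omega|^2 \), is a high-high interaction of two frequency-\(\sim N\) factors and yields \( T^{1/2} N^{\nu-s+1-\sigma^\prime} \); it carries no factor of \( \gamma \), and the gain \( N^{-\gamma(\sigma^\prime-1)} \) you claim to extract there ``from optimizing the derivative distribution'' does not exist (this is harmless only because \( N^{\nu-s+1-\sigma^\prime} \leq N^{\nu-s-\gamma(\sigma^\prime-1)} \), as \( (1-\sigma^\prime)(1-\gamma)\leq 0 \)). The factor \( N^{\nu-s-\gamma(\sigma^\prime-1)} \) in \eqref{nl:eq_a_priori} actually comes from term 6, specifically from the \( F_{\leq n-1}^\omega \) component of \( u_{n-1} \): one must place \( \nabla P_{>N^\gamma} F_{\leq n-1}^\omega \) in \( L_t^2 L_x^\infty \) with \( \sigma^\prime \) derivatives---the restriction to frequencies \( > N^\gamma \) is what buys \( N^{-\gamma(\sigma^\prime-1)} \)---and \( \nabla F_n^\omega \) in \( L_t^\infty L_x^2 \) with \( s \) derivatives. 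This is the opposite placement from the one you describe for term 6. Your placement (\( u_{n-1} \) in \( L_t^\infty L_x^2 \), \( F_n^\omega \) in \( L_t^2 L_x^\infty \)) is available only for the \( w_{\leq n-1} \) component and produces only the power \( N^{(1-\gamma)(\nu-1)+1-\sigma^\prime} \); the cutoff \( \theta_{F,w;\leq n-1} \) gives no \( L_t^\infty L_x^2 \)-type control of \( F_{\leq n-1}^\omega \) at all (it only bounds \( \| \bra^{\sigma^\prime} F_m^\omega \|_{\SlMprime} \)), so the estimate you propose for that piece cannot be run. The two components of \( u_{n-1} \) must be treated with opposite H\"older placements, as in Corollary \ref{nl:cor_inhomogeneous_contribution}.

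Second, the contraction step for the truncated quadratic term is glossed over. The difference \( \theta_v |\nabla v|^2 - \theta_w |\nabla w|^2 \) contains the piece \( (\theta_v - \theta_w)|\nabla v|^2 \), in which \( |\nabla v|^2 \) is no longer protected by its own cutoff; a naive bound gives \( \| v - w\|_{\YN} \| v \|_{\YN}^2 \) with \( \| v \|_{\YN} \) unbounded over the whole space on which \( \Gamma \) acts. The paper resolves this with the de Bouard--Debussche device: introduce the exit times \( t_v, t_w \) of the region where the truncated norms are at most \( 2 \), use the continuity statement \eqref{prelim:eq_continuity} to bound the indicator-restricted norms, and split the time integral accordingly. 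Without this (or an equivalent argument) your fixed-point iteration does not close.
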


\subsection{Bilinear Estimates}
In this section we prove the main bilinear estimates for the Duhamel terms in \eqref{nl:eq_truncated_wn}. In order to group similar estimates together, we work with a paraproduct decomposition. We define
\begin{align*}
\Plh(v,w) &:= \sum_{L,K\colon L \ll K} \duh \nabla P_L v \cdot \nabla P_K w ~\dtp ~,\\
\Phl(v,w) &:=\sum_{L,K\colon L \gg K } \duh \nabla P_L v \cdot \nabla P_K w ~\dtp ~, \\
\Phh(v,w) &:= \sum_{L, K \colon L \sim K} \duh \nabla P_L \cdot \nabla P_K w ~\dtp ~.  
\end{align*}
Our motivation for distinguishing between low-high and high-low interactions stems from the terms in \eqref{nl:eq_truncated_wn}. Whereas the first factor is often localized at frequencies \( \lesssim N\), the second factor is always localized at frequencies \( \sim N \).\\
We now summarize the necessary estimates for the proof of Proposition \ref{nl:prop_wn}. The functions \( F,G \) below correspond to either \( F_{\leq n-1}^\omega \) or \( F_n^\omega\), and the functions \( v,w \) below correspond to either \( w_{\leq n-1} \) or \( w_n \). To simplify the notation, recall from Section \ref{sec:function_spaces} that
\begin{equation*}
\begin{aligned}
\| u \|_{\YN} = \| u \|_{\YN([0,T])}&:= \| \langle \nabla \rangle^\nu u \|_{(\XNeta \bigcap \XlN)([0,T])} + \| \langle \nabla \rangle^{\nu-1} \partial_t u \|_{(\XNeta \bigcap \XlN)([0,T])} \\
							 &~+\| \langle \nabla \rangle^{\sigma} u \|_{(\SNeta \bigcap \SlN)([0,T])}~. 
\end{aligned}
\end{equation*}

\begin{lem}[Low-High Interactions]\label{nl:lem_low_high}
Assume that \( \nu> 2 \), \( \sigma=\nu-1- \), \( s> 1 \), \( \sigma^\prime > 1 \), and \( \eta >0  \). Let \( D\geq D_0(s,\nu,\sigma^\prime,\sigma,\eta) \) and \( D^\prime\geq D_0^\prime(s,\nu,\sigma^\prime,\sigma,\eta,D) \) be sufficiently large. Then, we have for any \( 0 < T \leq 1 \) that  
\begin{align}
\|  \Plh(G,F) \|_{\YN}  &\lesssim T^{\frac{1}{2}} N^{\nu-s+1-\sigma^\prime} \| \langle \nabla \rangle^{\sigma^\prime} G \|_{\SNprime} \| \langle \nabla \rangle^s F \|_{\XNprime}~, \label{nl:eq_low_high_FF}\\
\|  \Plh(P_{>N^\gamma} G,F) \|_{\YN} &\lesssim T^{\frac{1}{2}} N^{\nu-s+\gamma(1-\sigma^\prime)} \| \langle \nabla \rangle^{\sigma^\prime} G \|_{\SlNprime} \| \langle \nabla \rangle^s F \|_{\XNprime}~, \label{nl:eq_low_high_PFF}~,\\
\| \Plh(P_{>N^\gamma} v , F) \|_{\YN} & \lesssim T^{\frac{1}{2}} N^{(1-\gamma)(\nu-1)+1-\sigma^\prime} \| \langle \nabla \rangle^{\nu} v \|_{\XlN} \| \langle \nabla \rangle^{\sigma^\prime} F \|_{\SNprime} \label{nl:eq_low_high_PwF}~, \\
\| \Plh(G,w) \|_{\YN} &\lesssim  T^{\frac{1}{2}} \| \langle \nabla \rangle^{\sigma^\prime} G \|_{\SlNprime} \| \langle \nabla \rangle^{\nu} w \|_{\XNeta \bigcap \XlN} \label{nl:eq_low_high_Fw}\\
\| \Plh(v,w) \|_{\YN} &\lesssim T^{\frac{1}{2}} \| \langle \nabla \rangle^{\sigma} v\|_{\SlN} \| \langle \nabla \rangle^{\nu} v \|_{\XNeta \bigcap \XlN}~. \label{nl:eq_low_high_ww}
\end{align}
\end{lem}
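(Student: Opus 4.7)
The plan is to reduce all five estimates to a single scheme: apply the Strichartz estimate (Lemma \ref{prelim:lem_strichartz}) at the level of individual Littlewood--Paley blocks of the output, apply Hölder's inequality in space-time to split each paraproduct pair, and then sum against the frequency weights defining $\YN$.

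Concretely, for each dyadic $M \geq 1$, the Strichartz estimate applied to $P_M \Plh(A,B)$ yields
\begin{equation*}
\|\bra^{\nu} P_M \Plh(A,B)\|_{L_t^\infty L_x^2} + \|\bra^{\nu-1} \partial_t P_M \Plh(A,B)\|_{L_t^\infty L_x^2} + \|\bra^{\sigma} P_M \Plh(A,B)\|_{L_t^2 L_x^\infty} \lesssim M^{\nu-1} \|P_M[\nabla A \cdot \nabla B]\|_{L_t^1 L_x^2}.
\end{equation*}
Since $L \ll K$ in the paraproduct, the frequency support of $\nabla P_L A \cdot \nabla P_K B$ lies at scale $\sim K$, so only $K \sim M$ contributes to $P_M$. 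Hölder's inequality then splits the product, placing the factor with a controlled $L_t^2 L_x^\infty$ norm into that space and the other into $L_t^\infty L_x^2$, picking up a $T^{1/2}$ loss from $L_t^2([0,T]) \hookrightarrow T^{1/2} L_t^\infty([0,T])$ on the $L_t^\infty L_x^2$ factor. In \eqref{nl:eq_low_high_FF}, \eqref{nl:eq_low_high_PFF}, \eqref{nl:eq_low_high_Fw}, and \eqref{nl:eq_low_high_ww}, the low factor carries a Strichartz-type norm and goes into $L_t^2 L_x^\infty$; in \eqref{nl:eq_low_high_PwF}, the low factor carries the energy-type $\XlN$ norm and is instead placed in $L_t^\infty L_x^2$, with the high factor $F$ (using its $\SNprime$ norm) placed in $L_t^2 L_x^\infty$.

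For each estimate, substituting the frequency-weighted bounds $\|P_L A\|_{\text{chosen space}} \lesssim L^{-\alpha_A} c_A(L)^{-1} \|\bra^{\alpha_A} A\|_{*}$ and analogously for $B$ yields an $L$-sum with summand $L^{1-\alpha_A} c_A(L)^{-1}$. This evaluates to $N^{1-\sigma^\prime}$ when the low factor carries the $c_{N,D'}$ weight centered at $N$ (as in \eqref{nl:eq_low_high_FF}); to $N^{\gamma(1-\alpha_A)}$ when a $P_{>N^\gamma}$ cutoff is present (as in \eqref{nl:eq_low_high_PFF} and \eqref{nl:eq_low_high_PwF}); and to $O(1)$, by geometric summability in $L$, when $\alpha_A > 1$ and no cutoff is present (as in \eqref{nl:eq_low_high_Fw} and \eqref{nl:eq_low_high_ww}). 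Summing in $K \sim M$ then contributes $M^{1-\alpha_B} c_B(M)^{-1}$, and summing in $M$ against the output weight of the relevant piece of $\YN$ concentrates the contribution at $M \sim N$, producing the stated power of $N$ on the right-hand side of each estimate. The $\SNeta \cap \SlN$ part of the $\YN$ norm and the $\partial_t$ contribution are handled identically, using the respective terms in the Strichartz estimate.

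The main obstacle is the weight bookkeeping: one must verify that the $L$-, $K$-, and $M$-sums converge simultaneously and that the dominant contribution always comes from frequencies $\sim N$. The hypothesis that $D \geq D_0(s,\nu,\sigma^\prime,\sigma,\eta)$ and $D^\prime \geq D_0^\prime(\ldots, D)$ be sufficiently large ensures that the polynomial decay of input weights at frequencies far from $N$ (or $N^\gamma$, under a $P_{>N^\gamma}$ cutoff) dominates the polynomial growth of the output weights. The cases \eqref{nl:eq_low_high_Fw} and \eqref{nl:eq_low_high_ww}, which admit no $N$-loss on the right-hand side, require particular care: there one must exploit the intersection $\XNeta \cap \XlN$ on the output to control the high-frequency contribution, switching between the $c_{N,\eta}$- and $c_{\leq N, D}$-weighted bounds for $M \leq N$ and $M > N$ respectively.
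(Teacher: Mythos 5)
Your proposal is correct and follows essentially the same route as the paper: blockwise application of the energy/Strichartz estimate giving the $M^{\nu-1}\|P_M[\nabla A\cdot\nabla B]\|_{L_t^1L_x^2}$ reduction, the observation that only $K\sim M$ survives in the low-high regime, H\"older with the $T^{1/2}$ loss, and the same choice of which factor goes into $L_t^2L_x^\infty$ versus $L_t^\infty L_x^2$ in each of the five cases (in particular placing $v$ in the energy norm and $F$ in the Strichartz norm for \eqref{nl:eq_low_high_PwF}). The weight bookkeeping you describe, including the role of large $D,D^\prime$ and the use of $\XNeta\medcap\XlN$ for \eqref{nl:eq_low_high_Fw} and \eqref{nl:eq_low_high_ww}, matches the paper's computation.
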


\begin{lem}[High-Low Interactions]\label{nl:lem_high_low}
Assume that \( \nu> 2 \), \( \sigma=\nu-1- \), \( s> 1 \), \( \sigma^\prime > 1 \), and \( \max( \nu-\sigma^\prime, \sigma-1 ) < \eta < \nu - 1 \). Let \( D\geq D_0(s,\nu,\sigma^\prime,\sigma,\eta) \) and \( D^\prime\geq D_0^\prime(s,\nu,\sigma^\prime,\sigma,\eta,D) \) be sufficiently large. Then, we have for any \( 0 < T \leq 1 \) that  
\begin{align}
\| \Phl(G,F) \|_{\YN} &\lt N^{\nu-s+1-\sigma^\prime} \|\langle \nabla \rangle^{\sigma^\prime} G \|_{\SlNprime} \| \langle \nabla \rangle^s F\|_{\XNprime}~\label{nl:eq_high_low_FF}~, \\
\| \Phl(v,F) \|_{\YN} &\lt N^{1-\sigma^\prime} \| \langle \nabla \rangle^{\nu} v \|_{\XlN} \| \langle \nabla \rangle^{\sigma^\prime} F \|_{\SNprime} \label{nl:eq_high_low_wF}~,\\
\| \Phl(G,w) \|_{\YN} &\lt  \| \langle \nabla \rangle^{\sigma^\prime} G \|_{\SlNprime} \| \langle \nabla \rangle^\nu w \|_{\XNeta}~, \label{nl:eq_high_low_Fw}\\
\| \Phl(v,w) \|_{\YN} &\lt \| \langle \nabla \rangle^{\nu}  v \|_{\XlN} \| \langle \nabla \rangle^{\sigma} w \|_{\SNeta}~. \label{nl:eq_high_low_ww}
\end{align}
\end{lem}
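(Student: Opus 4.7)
The plan is to reduce all four estimates to a common scheme driven by the Strichartz estimate of Lemma \ref{prelim:lem_strichartz}. Denoting by $\Phi(a,b) := \sum_{L \gg K} \nabla P_L a \cdot \nabla P_K b$ the integrand of $\Phl(a,b)$, Lemma \ref{prelim:lem_strichartz} applied frequency-by-frequency to $\Phl(a,b)$ controls the $\XNeta$ and $\SNeta$ pieces of $\|\Phl(a,b)\|_{\YN([0,T])}$ by
\begin{equation*}
\sum_{M \geq 1} c_{N,\eta}(M)\, M^{\nu-1}\, \|P_M \Phi(a,b)\|_{L_t^1 L_x^2([0,T] \times \rthree)},
\end{equation*}
and the $\XlN$ and $\SlN$ pieces by the analogous sum with $c_{\leq N, D}$ in place of $c_{N, \eta}$. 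So it suffices to control this sum for each weight $c \in \{ c_{N,\eta}, c_{\leq N,D} \}$ and each of the four products $\Phi(a,b)$ in the statement.

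The next observation is paraproduct geometry: the summand $\nabla P_L a \cdot \nabla P_K b$ in $\Phi(a,b)$ with $L \gg K$ has frequency support at $\sim L$, so $P_M$ kills all summands except those with $L \sim M$. In each sub-case I would place the factor carrying a Strichartz-type norm ($\SlNprime$, $\SNprime$, $\SNeta$, or $\SlN$) in $L_t^2 L_x^\infty$ and the factor carrying an energy-type norm ($\XNprime$, $\XlN$, or $\XNeta$) in $L_t^\infty L_x^2$. H\"older in space, combined with the time embedding $L_t^\infty([0,T]) \hookrightarrow T^{1/2} L_t^2([0,T])$, yields
\begin{equation*}
c(M)\, M^{\nu-1}\, \|P_M \Phi(a,b)\|_{L_t^1 L_x^2} \lesssim T^{1/2}\, c(M)\, M^\nu \|P_M a\|_{\alpha} \sum_{K \ll M} K\, \|P_K b\|_{\beta},
\end{equation*}
where $(\alpha, \beta)$ is the H\"older pair compatible with the target norms of $a$ and $b$.

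The heart of the argument is then the resulting dyadic double sum. Substituting the envelope bound for $b$ pointwise in $K$ and pulling the $a$-factor out through $\sum_M c_a(M)\, M^\nu \|P_M a\|_\alpha \leq \|a\|$, each estimate reduces to a uniform bound of the form
\begin{equation*}
\sup_{M \geq 1} \frac{c(M)}{c_a(M)} \sum_{K \ll M} K^{1-r_b}\, c_b(K)^{-1} \lesssim N^{\mathrm{power}},
\end{equation*}
where $c_a, c_b, r_b$ are read off from the target norms. In \eqref{nl:eq_high_low_FF}, the inner sum $\sum_K K^{1-s} c_{N, D^\prime}(K)^{-1}$ concentrates at $K \sim \min(M, N)$ and the outer factor $c(M)\, c_{\leq N, D^\prime}(M)^{-1} M^{\nu - \sigma^\prime}$ is extremised at $M \sim N$, producing $N^{\nu - s + 1 - \sigma^\prime}$. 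The same mechanism yields $N^{1 - \sigma^\prime}$ in \eqref{nl:eq_high_low_wF}. In \eqref{nl:eq_high_low_Fw}, the inner sum $\sum_K K^{1-\nu} c_{N, \eta}(K)^{-1}$ concentrates at $K \sim 1$ (using $\eta < \nu - 1$) and the outer sum at $M \sim 1$ (using $\nu - \sigma^\prime < \eta$), giving no power of $N$. In \eqref{nl:eq_high_low_ww}, the inner sum concentrates at $K \sim \min(M, N)$ but cancels the outer weight ratio, leaving a summable $M^{1-\sigma}$ profile (using $\sigma > 1$), again with no power of $N$.

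The main technical obstacle is carrying out these computations explicitly and tracking separately the four regimes $M \leq N$, $M > N$, $K \leq N$, $K > N$ in each sub-case. The inequalities $\eta < \nu - 1$, $\nu - \sigma^\prime < \eta$, and $\sigma - 1 < \eta$ in the hypothesis are exactly what is needed so that the relevant low-frequency sums converge at $K, M \sim 1$; choosing $D$ and $D^\prime$ sufficiently large in terms of $s, \nu, \sigma^\prime, \eta$ ensures that the polynomial decay of the envelopes $c_{N, D^\prime}$, $c_{N, \eta}$, $c_{\leq N, D^\prime}$, and $c_{\leq N, D}$ suppresses the contributions from $M \gg N$ and $K \gg N$. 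Finally, the $\XlN$ and $\SlN$ components of $\YN$ are obtained from the $\XNeta$ and $\SNeta$ computation by replacing $c_{N, \eta}$ with $c_{\leq N, D}$ as outer weight: for $M < N$ this only improves the bound (since $c_{\leq N, D}(M) = 1 \leq c_{N, \eta}(M)$), and for $M > N$ choosing $D$ large enough in terms of $D^\prime$ and the other parameters reproduces the same conclusions.
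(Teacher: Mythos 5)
Your proposal is correct and follows essentially the same route as the paper: frequency-by-frequency energy/Strichartz estimates reducing to $\sum_M c(M) M^{\nu-1}\|P_M(\nabla P_{\sim M} a\cdot\nabla P_{\ll M}b)\|_{L_t^1L_x^2}$, H\"older placing the Strichartz-normed factor in $L_t^2L_x^\infty$ and the energy-normed factor in $L_t^\infty L_x^2$, and then the dyadic double sums you describe, with the parameter conditions $\eta>\max(\nu-\sigma',\sigma-1)$, $\eta<\nu-1$ entering exactly where you say. The only blemish is that your generic reduction displays drop the leftover factor $M^{\nu-r_a}$ (e.g.\ $M^{\nu-\sigma'}$ when $a$ carries only $\sigma'$ derivatives), but your case-by-case discussion reinstates it, so this is a notational slip rather than a gap.
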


\begin{lem}[High-High Interactions]\label{nl:lem_high_high}
Assume that \( \nu> 2 \), \( \sigma=\nu-1- \), \( s> 1 \), \( \sigma^\prime > 1 \), and \( 0 < \eta < \nu - 1 \).  Let \( D\geq D_0(s,\nu,\sigma^\prime,\sigma,\eta) \) and \( D^\prime\geq D_0^\prime(s,\nu,\sigma^\prime,\sigma,\eta,D) \) be sufficiently large. Then, we have for any \( 0 < T \leq 1 \) that  
\begin{align}
\| \Phh(G,F) \|_{\YN} &\lt N^{\nu-s+1-\sigma^\prime} \| \langle \nabla \rangle^{\sigma^\prime} G \|_{\SlN} \| \langle \nabla \rangle^s F \|_{\XNprime}~, \label{nl:eq_high_high_FF}\\
\| \Phh(v,F) \|_{\YN} &\lt N^{1-\sigma^\prime} \| \langle \nabla \rangle^{\nu} v \|_{\XlN} \| \langle \nabla \rangle^{\sigma^\prime} F \|_{\SNprime} ~, \label{nl:eq_high_high_wF} \\
\| \Phh(G,w) \|_{\YN} &\lt \| \langle \nabla \rangle^{\sigma^\prime} G \|_{\SlNprime} \| \langle \nabla\rangle^{\nu} w \|_{\XNeta}~, \label{nl:eq_high_high_Fw} \\
\| \Phh(v,w) \|_{\YN} &\lt \| \langle \nabla \rangle^{\sigma} v \|_{\SlN} \| \langle \nabla \rangle^{\nu} w \|_{\XNeta \bigcap \XlN} \label{nl:eq_high_high_ww}~.  
\end{align}
\end{lem}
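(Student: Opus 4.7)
The four estimates share a common template. By the Strichartz estimate (Lemma~\ref{prelim:lem_strichartz}) applied to the Duhamel integral with zero initial data, for each dyadic output frequency $M$ one has
\[
M^\nu\|P_M u\|_{L_t^\infty L_x^2}+M^{\nu-1}\|P_M\partial_t u\|_{L_t^\infty L_x^2}+M^\sigma\|P_M u\|_{L_t^2 L_x^\infty}\lesssim M^{\nu-1}\|P_M\mathcal{N}\|_{L_t^1 L_x^2([0,T]\times\rthree)},
\]
where $\mathcal{N}$ is the forcing term. Multiplying by $c(M)\in\{c_{N,\eta}(M),c_{\leq N,D}(M)\}$ and summing in $M$ reduces each component of the $\YN$-norm of $\Phh(v,w)$ to a weighted bound on $\sum_M c(M)M^{\nu-1}\|P_M\Phh(v,w)\|_{L_t^1 L_x^2}$.

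For the high-high paraproduct I expand $P_M\Phh(v,w)=\sum_{L\sim K,\,L\gtrsim M}P_M(\nabla P_L v\cdot\nabla P_K w)$ and apply Hölder's inequality $L_x^\infty\cdot L_x^2\hookrightarrow L_x^2$ together with Cauchy--Schwarz in time to gain the factor $T^{1/2}$:
\[
\|P_M(\nabla P_L v\cdot\nabla P_K w)\|_{L_t^1 L_x^2}\lesssim T^{\frac{1}{2}}\,LK\,\|P_L v\|_{L_t^2 L_x^\infty}\|P_K w\|_{L_t^\infty L_x^2}.
\]
In each of the four inequalities the Strichartz-type input (bounded in one of $\SlN,\SlNprime,\SNprime$) is placed in $L_t^2 L_x^\infty$ and the energy-type input (bounded in $\XNprime$, $\XlN$, $\XNeta$, or $\XNeta\cap\XlN$) in $L_t^\infty L_x^2$. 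Inserting the defining inequalities of these spaces---for instance $\|P_L G\|_{L_t^2 L_x^\infty}\leq L^{-\sigma^\prime}c_{\leq N,D}(L)^{-1}\|\langle\nabla\rangle^{\sigma^\prime}G\|_{\SlN}$ and $\|P_K F\|_{L_t^\infty L_x^2}\leq K^{-s}c_{N,D^\prime}(K)^{-1}\|\langle\nabla\rangle^s F\|_{\XNprime}$ for \eqref{nl:eq_high_high_FF}---reduces each claim to a triple dyadic summation of the form
\[
\sum_M\sum_{L\sim K,\,L\gtrsim M}c(M)M^{\nu-1}\,L^{1-a}K^{1-b}\,c_{\mathrm{in},1}(L)^{-1}c_{\mathrm{in},2}(K)^{-1},
\]
where $(a,b)$ and the input weights are read off the particular statement.

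For fixed $L$, the $M$-sum converges under the standing hypothesis $\eta<\nu-1$ and is dominated by $M\sim L$, producing $L^{\nu-1}c(L)$. The residual sum over $L$ has the shape $\sum_L L^{\nu-a-b+1}c(L)c_{\mathrm{in},1}(L)^{-1}c_{\mathrm{in},2}(L)^{-1}$, and once $D,D^\prime$ are taken sufficiently large relative to $\eta$ the combined weight peaks at $L\sim N$, so the sum is bounded by $N^{\max(\nu-a-b+1,0)}$ times the product of the input norms. Reading off $(a,b)$ for each statement produces $N^{\nu-s+1-\sigma^\prime}$ in \eqref{nl:eq_high_high_FF}, $N^{1-\sigma^\prime}$ in \eqref{nl:eq_high_high_wF}, and an $N$-independent constant in \eqref{nl:eq_high_high_Fw} and \eqref{nl:eq_high_high_ww}, where the corresponding exponents $1-\sigma^\prime$ and $1-\sigma$ are negative so that the residual $L$-sum is absolutely summable even without the weight decay. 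The main obstacle is the weight bookkeeping across the three regimes $L\ll N$, $L\sim N$, $L\gg N$; for \eqref{nl:eq_high_high_ww} in particular one must use \emph{both} components of the intersection $\XNeta\cap\XlN$ on $w$, with the $\XNeta$-part supplying decay of $c_{\mathrm{in},2}(L)^{-1}$ in the regime $L\lesssim N$ and the $\XlN$-part supplying it for $L\gg N$, since neither alone provides sufficient decay across both regimes to close the $L$-sum.
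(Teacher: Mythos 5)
Your proof follows essentially the same route as the paper: reduce via the energy/Strichartz estimate to weighted sums over $M$ of $M^{\nu-1}\|P_M(\nabla P_L\,\cdot\,\nabla P_K\,\cdot\,)\|_{L_t^1L_x^2}$, apply H\"older, Cauchy--Schwarz in time, and Bernstein with the Strichartz-type factor in $L_t^2L_x^\infty$, and close the dyadic sums using $\eta<\nu-1$ and the largeness of $D,D^\prime$ (the paper merely performs the $M$- and $L$-sums in the opposite order). One small internal inconsistency: your blanket claim that the residual $L$-sum is bounded by $N^{\max(\nu-a-b+1,0)}$ is weaker than, and contradicts, the bounds $N^{\nu-s+1-\sigma^\prime}$ and $N^{1-\sigma^\prime}$ that you then correctly read off for \eqref{nl:eq_high_high_FF} and \eqref{nl:eq_high_high_wF}; in those two cases the two-sided weight $c_{N,D^\prime}(K)^{-1}$ carried by $F$ pins $K\sim N$, so the sum evaluates to $N^{\nu-a-b+1}$ even when this exponent is negative, which is exactly what the lemma requires.
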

Since the (standard) proofs of the inequalities \eqref{nl:eq_low_high_FF}-\eqref{nl:eq_high_high_ww} are relatively long, we postpone them until Section \ref{section:proof_bilinear}. We now use the estimates above to control the contribution of \( \nabla P_{>N^\gamma} u_{n-1} \cdot \nabla F_n^\omega \). Under certain conditions on the parameters, this term will be smoother than the adapted linear evolution \( F_n^\omega \). This shows that we removed the unfavorable low-high interaction described in the introduction. Since the low-high interaction is the principal obstacle in the control of the nonlinear component \( w_n \), this is the main step in the proof of Proposition \ref{nl:prop_wn}. 
\begin{cor}[Control of \( \nabla P_{>N^\gamma} u_{n-1}\cdot \nabla F_n^\omega \)]\label{nl:cor_inhomogeneous_contribution}
Assume that \( \nu> 2 \), \( \sigma=\nu-1- \), \( s> 1 \), \( \sigma^\prime > 1 \), and \( \max( \nu-\sigma^\prime, \sigma-1 ) < \eta < \nu - 1 \). Let \( D\geq D_0(s,\nu,\sigma^\prime,\sigma,\eta) \) and \( D^\prime\geq D_0^\prime(s,\nu,\sigma^\prime,\sigma,\eta,D) \) be sufficiently large. Then, we have for any \( 0 < T \leq 1 \) that  

\begin{equation}\label{nl:eq_inhomogeneous_contribution}
\begin{aligned}
&\Big\| \duh \theta_{F,w;\leq n-1}(t^\prime) \nabla P_{>N^\gamma} u_{n-1} \nabla F_n^\omega \dtp \Big\|_{\YN} \\
&\lt \left( N^{\nu-s+\gamma(1-\sigma^\prime)} + N^{(1-\gamma) (\nu-1)+1-\sigma^\prime} \right) \left( \| \bra^s F_n^\omega \|_{\XNprime} + \| \bra^{\sigma^\prime} F_n^\omega \|_{\SNprime} \right)~. 
\end{aligned}
\end{equation}
\end{cor}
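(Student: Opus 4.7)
The strategy is to reduce the inhomogeneous term to six paraproduct pieces, each handled by one of the bilinear estimates in Lemmas \ref{nl:lem_low_high}, \ref{nl:lem_high_low}, and \ref{nl:lem_high_high}. First I would split $u_{n-1} = F_{\leq n-1}^\omega + w_{\leq n-1}$ to obtain
\begin{equation*}
\nabla P_{>N^\gamma} u_{n-1} \cdot \nabla F_n^\omega = \nabla P_{>N^\gamma} F_{\leq n-1}^\omega \cdot \nabla F_n^\omega + \nabla P_{>N^\gamma} w_{\leq n-1} \cdot \nabla F_n^\omega~,
\end{equation*}
and then apply the paraproduct decomposition $\Plh + \Phl + \Phh$ to each of these two bilinear expressions, placing the first factor in the first argument of the paraproducts.

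For the pure linear piece $\nabla P_{>N^\gamma} F_{\leq n-1}^\omega \cdot \nabla F_n^\omega$, I apply \eqref{nl:eq_low_high_PFF} to the $\Plh$ component, and \eqref{nl:eq_high_low_FF}, \eqref{nl:eq_high_high_FF} to the $\Phl$ and $\Phh$ components with $G = F_{\leq n-1}^\omega$ (in the latter two, $P_{>N^\gamma}$ is harmlessly discarded, since it only restricts the Fourier support of $G$ and cannot enlarge the $\SlNprime$-norm). Because $\sigma^\prime > 1$ and $0 < \gamma < 1$ yield $\gamma(1-\sigma^\prime) > 1-\sigma^\prime$, the $\Plh$ exponent $N^{\nu-s+\gamma(1-\sigma^\prime)}$ dominates the $\Phl$/$\Phh$ exponents $N^{\nu-s+1-\sigma^\prime}$, reproducing the first term of \eqref{nl:eq_inhomogeneous_contribution}. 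The mixed piece $\nabla P_{>N^\gamma} w_{\leq n-1} \cdot \nabla F_n^\omega$ is treated analogously via \eqref{nl:eq_low_high_PwF}, \eqref{nl:eq_high_low_wF}, and \eqref{nl:eq_high_high_wF} with $v = w_{\leq n-1}$; since $(1-\gamma)(\nu-1) > 0$, the $\Plh$ exponent $N^{(1-\gamma)(\nu-1)+1-\sigma^\prime}$ dominates the $\Phl$/$\Phh$ exponents $N^{1-\sigma^\prime}$, yielding the second term of \eqref{nl:eq_inhomogeneous_contribution}.

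To close the argument I would absorb the cutoff $\theta_{F,w;\leq n-1}(s)$ into the bounds. Since $|\theta| \leq 1$ and the underlying norms in its definition are monotone in $s$, the cutoff is nonincreasing in $s$, so the Duhamel integral is effectively restricted to the subinterval on which $\theta \not\equiv 0$. On that subinterval the definition of $\theta_{F,w;\leq n-1}$, combined with the triangle inequalities in \eqref{it:eq_triangle} and the straightforward analogue $\|\bra^\nu w_{\leq n-1}\|_{\XlN} \leq \sum_{m=0}^{n-1} \|\bra^\nu w_m\|_{\XlM}$ (which uses $\|\cdot\|_{\XlN} \leq \|\cdot\|_{\XlM}$ for $M \leq N$), bounds both $\|\bra^{\sigma^\prime} F_{\leq n-1}^\omega\|_{\SlNprime}$ and $\|\bra^\nu w_{\leq n-1}\|_{\XlN}$ by an absolute constant. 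The main obstacle I anticipate is purely bookkeeping: verifying case-by-case that each of the six paraproduct pieces is compatible with the corresponding lemma (in particular that $P_{>N^\gamma}$ is either directly built into the estimate or can be harmlessly absorbed), and that the two exponents displayed in \eqref{nl:eq_inhomogeneous_contribution} genuinely dominate all four subdominant contributions; no analytic ingredient beyond Lemmas \ref{nl:lem_low_high}--\ref{nl:lem_high_high} is required.
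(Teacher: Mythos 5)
Your proposal is correct and follows essentially the same route as the paper: the same splitting $u_{n-1} = F_{\leq n-1}^\omega + w_{\leq n-1}$, the same six bilinear estimates \eqref{nl:eq_low_high_PFF}, \eqref{nl:eq_high_low_FF}, \eqref{nl:eq_high_high_FF} and \eqref{nl:eq_low_high_PwF}, \eqref{nl:eq_high_low_wF}, \eqref{nl:eq_high_high_wF}, and the same use of the cutoff together with \eqref{it:eq_triangle} to bound $\|\bra^{\sigma^\prime} F_{\leq n-1}^\omega\|_{\SlNprime}$ and $\|\bra^\nu w_{\leq n-1}\|_{\XlN}$ by an absolute constant. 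Your explicit verification that the two displayed exponents dominate the subdominant $\Phl$/$\Phh$ contributions is correct and is left implicit in the paper.
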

\begin{proof}
We split \( u_{n-1} =  F_{\leq n-1}^\omega + w_{\leq n-1} \). \\
Using \eqref{nl:eq_low_high_PFF}, \eqref{nl:eq_high_low_FF}, and \eqref{nl:eq_high_high_FF}, we have that 
\begin{align*}
&\Big \| \duh \theta_{F,w;\leq n-1}(t^\prime) \nabla P_{>N^\gamma} F_{\leq n-1}^\omega \nabla F_n^\omega ~ \dtp \Big \|_{\YN} \\
&\lt N^{\nu-s+\gamma(1-\sigma^\prime)} \| \theta_{F,w;\leq n-1} \langle \nabla \rangle^{\sigma^\prime} F_{\leq n-1}^\omega \|_{\SlNprime} \| \bra^s F_n^\omega \|_{\XNprime} \\
&\lt N^{\nu-s+\gamma(1-\sigma^\prime)}\| \bra^s F_n^\omega \|_{\XNprime} ~. 
\end{align*}
Using \eqref{nl:eq_low_high_PwF}, \eqref{nl:eq_high_low_wF}, and \eqref{nl:eq_high_high_wF}, we have that 
\begin{align*}
&\Big \| \duh \theta_{F,w;\leq n-1}(t^\prime) \nabla P_{>N^\gamma} w_{\leq n-1} \nabla F_n^\omega ~ \dtp \Big \|_{\YN} \\
&\lt N^{(1-\gamma)(\nu-1)+1-\sigma^\prime} \| \theta_{F,w;\leq n-1} \langle \nabla \rangle^{\nu} w_{\leq n-1} \|_{\XlN} \| \bra^{\sigma^\prime} F_n^\omega \|_{\SNprime} \\
&\lt N^{(1-\gamma)(\nu-1)+1-\sigma^\prime} \| \bra^{\sigma^\prime} F_n^\omega \|_{\SNprime} ~. 
\end{align*}
\end{proof}
\begin{rem}
Because of the importance of the term \( \nabla P_{>N^\gamma} u_{n-1} \cdot \nabla F_n^\omega \), we  informally justify \eqref{nl:eq_inhomogeneous_contribution} and describe the motivation behind the estimate. \\
The first power of \( N \) comes from the contribution of \( \nabla P_{N^\gamma} F_{\leq n-1}^\omega \cdot \nabla F_{n}^\omega \). It is bounded by
\begin{equation*}
\| \langle  \nabla \rangle^{\nu} \duh \nabla P_{N^\gamma} F_{\leq n-1}^\omega \cdot \nabla F_n^\omega \|_{L_t^\infty L_x^2} \lesssim
T^{\frac{1}{2}} N^{\nu-1} \| \nabla P_{N^\gamma} F_{\leq n-1}^\omega \|_{L_t^2L_x^\infty}  \| \nabla F_n^\omega \|_{L_t^\infty L_x^2}~. 
\end{equation*}
Thus, the resulting power is 
\begin{equation}\label{lin:cond_rough_rough}
N^{\nu-s-\gamma(\sigma^\prime-1)} = \underbrace{N^{\nu-1}}_{\text{derivatives}}  \cdot \underbrace{N^{-\gamma(\sigma^\prime-1)}}_{\text{derivatives on }F_{\leq n-1}^\omega} ~\cdot \underbrace{N^{1-s}}_{\text{derivatives on }F_n^\omega}.
\end{equation}
The second power of \( N \) comes from the contribution  \( \nabla P_{N^\gamma} w_{\leq n-1} \cdot \nabla F_n^\omega \). It is bounded by 
\begin{equation*}
\| \langle  \nabla \rangle^{\nu} \duh \nabla P_{N^\gamma} w_{\leq n-1} \cdot \nabla F_n^\omega \|_{L_t^\infty L_x^2}\lesssim T^{\frac{1}{2}} N^{\nu-1} \| P_{N^\gamma} \nabla w_{\leq n-1} \|_{L_t^\infty L_x^2}   \| \nabla F_n^\omega \|_{L_t^2 L_x^\infty}
\end{equation*}
Thus, the resulting power is
\begin{equation}\label{lin:cond_smooth_rough}
N^{(1-\gamma) (\nu-1)+1-\sigma^\prime}= \underbrace{N^{\nu-1}}_{\text{derivatives}}  \cdot \underbrace{N^{-\gamma(\nu-1)}}_{\text{derivatives on }w_{\leq n-1}}\cdot \underbrace{N^{1-\sigma^\prime}}_{\text{derivatives on }F_n^\omega}~.
\end{equation}
This estimate may seem counterintuitive, since the term with the higher frequency is placed in \( L_t^2 L_x^\infty \). However, this our only option to capitalize on the randomness, which enters through the probabilistic Strichartz estimate \eqref{lin:eq_probabilistic_strichartz}. In fact, switching the roles of \( w_{\leq n-1} \) and \( F_n^\omega \) above would not allow us to go below the deterministic restriction \( s>2 \). \\
\end{rem}

\subsection{Control of the nonlinear component \( w_n \)}

\begin{proof}[Proof of Proposition \ref{nl:prop_wn}:]~\\
We begin by showing the a-priori estimate for \( w_n \), which forms the main part of the proof. Afterwards, we will use contraction mapping to prove the existence and uniqueness of \( w_n \). This step could potentially be replaced by a soft argument, since all involved functions are smooth (with norms growing in \(N\)). 
\paragraph{A-priori bounds:}
We separate the proof into six cases, corresponding to the different terms in \eqref{nl:eq_truncated_wn}.\\

\emph{Case 1: Contribution of \( |\nabla F_n^\omega|^2 \).} Using \eqref{nl:eq_low_high_FF}, \eqref{nl:eq_high_low_FF}, and \eqref{nl:eq_high_high_FF}, we have that
\begin{equation*}  
\Big \|  \duh  \theta_{F;n}(t^\prime) \nabla F_n^\omega \cdot \nabla F_n^\omega  \dtp \Big\|_{\YN} 
\lt N^{\nu-s+1-\sigma^\prime} \| \bra^s F_n^\omega \|_{\XNprime} ~. 
\end{equation*}

\emph{Case 2:  Contribution of \( \nabla F_n^\omega \nabla w_n \).} Using \eqref{nl:eq_low_high_Fw}, \eqref{nl:eq_high_low_Fw}, and \eqref{nl:eq_high_high_Fw}, we have that
\begin{equation*}  
\Big \|  \duh  \theta_{F;n}(t^\prime) \nabla F_n^\omega \cdot \nabla w_n \dtp \Big \|_{\YN} 
\lesssim T^{\frac{1}{2}}  \| w_n \|_{\YN}~. 
\end{equation*}

\emph{Case 3:  Contribution of \( |\nabla w_n|^2 \).} Using \eqref{nl:eq_low_high_ww}, \eqref{nl:eq_high_low_ww}, and \eqref{nl:eq_high_high_ww}, we have that
\begin{equation*}
\Big \|  \duh \theta_{w;n}(t^\prime) \nabla w_n \cdot \nabla w_n \dtp \Big \|_{\YN} 
\lesssim T^{\frac{1}{2}} 
 \| w_n \|_{\YN} ~. 
\end{equation*}

\emph{Case 4:  Contribution of \(  \nabla F_{\leq n-1}^\omega \nabla w_n\).} Using \eqref{nl:eq_low_high_Fw}, \eqref{nl:eq_high_low_Fw}, and \eqref{nl:eq_high_high_Fw}, we have that
\begin{equation*}  
\Big \|  \duh  \theta_{F,w;\leq n-1}(t^\prime) \nabla F_{\leq n-1}^\omega \cdot \nabla w_n \dtp \Big \|_{\YN} 
\lesssim T^{\frac{1}{2}}  
 \| w_n \|_{\YN}~.
\end{equation*} 

\emph{Case 5: Contribution of \(  \nabla w_{\leq n-1} \nabla w_n\).} Using \eqref{nl:eq_low_high_ww}, \eqref{nl:eq_high_low_ww}, and \eqref{nl:eq_high_high_ww}, we have that
\begin{equation*}
\Big \| \duh \theta_{F,w;\leq n-1}(t^\prime) \nabla w_{\leq n-1}\cdot \nabla w_n \dtp \Big \|_{\YN} 
\lesssim T^{\frac{1}{2}} 
 \| w_n \|_{\YN} ~. 
\end{equation*}

\emph{Case 6: Contribution of \( \nabla P_{>N^\gamma} \nabla u_{n-1} \nabla F_n^\omega\).} This term was already estimated in Corollary \ref{nl:cor_inhomogeneous_contribution}. We have that
\begin{align*}
&\Big \| \duh \theta_{F,w;\leq n-1}(t^\prime) \nabla P_{>N^\gamma} u_{n-1} \nabla F_n^\omega \dtp \Big \|_{\YN} \\
&\lt \left( N^{\nu-s+\gamma(1-\sigma^\prime)} + N^{(1-\gamma) (\nu-1)+1-\sigma^\prime} \right) \left( \| \bra^s F_n^\omega \|_{\XNprime} + \| \bra^{\sigma^\prime} F_n^\omega \|_{\SNprime} \right)~. 
\end{align*}

Combining the estimates above, we obtain that 
\begin{equation*}
\| w_n \|_{\YN} \lesssim T^{\frac{1}{2}} \left( N^{\nu-s+\gamma(1-\sigma^\prime)} + N^{(1-\gamma) (\nu-1)+1-\sigma^\prime} \right) \left( \| \bra^s F_n^\omega \|_{\XNprime} + \| \bra^{\sigma^\prime} F_n^\omega \|_{\SNprime} \right) + T^{\frac{1}{2}} \| w_n \|_{\YN}~. 
\end{equation*}
Then, the a-priori bound follows by choosing \( T_0 >0 \) sufficiently small. 
\paragraph{Contraction Mapping:} ~\\
Due to the truncations using \( \theta\), we may work on the whole space \( \YN \). We set
\begin{equation}\label{nl:eq_gamma}
\begin{aligned}
\Gamma w(t)&:=
\duh  \theta_{F;n}(t^\prime) |\nabla F_n^\omega|^2\dtp  
		+2 \duh \theta_{F;n}(t^\prime) \nabla F_n^\omega \nabla w \dtp \\
		&~~+ \duh \theta_{w}(t^\prime) |\nabla w|^2\dtp
		+ 2 \duh \theta_{F,w;\leq n-1}(t^\prime) \nabla F_{\leq n-1}^\omega \nabla w \dtp \\
		&~~+ \duh \theta_{F,w;\leq n-1}(t^\prime) \nabla w_{\leq n-1} \nabla w \dtp \\
		&~~ + \duh \theta_{F,w;\leq n-1}(t^\prime) \nabla P_{>N^\gamma}  u_{n-1} \nabla F_n^\omega \dtp ~. 
\end{aligned}
\end{equation}
Here, the cutoff \( \theta_w(s) \) is defined by replacing \( w_n \) in the definition of \( \theta_{w;n}(s) \) with \( w \), see \eqref{it:eq_theta_wn}. The same arguments that led to the a-priori bound show that 
\begin{equation*}
\| \Gamma w \|_{\YN} \lesssim  T^{\frac{1}{2}} \left( N^{\nu-s-\gamma(\sigma^\prime-1)}+ N^{(1-\gamma)(\nu-1)+1-\sigma^\prime} \right) \left( \| \langle \nabla \rangle^s F_n^\omega \|_{\XNprime}+ \| \langle \nabla \rangle^{\sigma^\prime} F_n^\omega \|_{\SNprime}\right) + T^{\frac{1}{2}} \| w \|_{\YN}~. 
\end{equation*}
In particular, \( \Gamma \) maps \( \YN \) into \( \YN \). Thus, it suffices to prove for all \( v,w\in \YN \) that 
\begin{equation*}
\| \Gamma v - \Gamma w \|_{\YN} \lesssim T^{\frac{1}{2}} \| v - w \|_{\YN}~. 
\end{equation*}
For the linear terms in \( v \) and \( w \), this follows from the estimates above. Thus, it remains to control the quadratic term \( \theta_{v} |\nabla v|^2 - \theta_{w} |\nabla w|^2 \). We use a similar method as in the proof of \cite[Proposition 3.1]{BD99}. We define
\begin{equation*}
t_v := \sup\{ 0 \leq t \leq T\colon \| \langle \nabla \rangle^\nu v \|_{(\XNeta \bigcap \XlN)([0,t])} + \| \langle \nabla \rangle^{\sigma} v \|_{(\SNeta \bigcap \SlN)([0,t])}\leq 2 \}~. 
\end{equation*}

The time \( t_w \) is defined analogously.  Due to the continuity statement \eqref{prelim:eq_continuity}, we have that 
\begin{equation*}
\| 1_{[0,t_v]} ~ \langle \nabla \rangle^\nu v \|_{(\XNeta \bigcap \XlN)([0,T])}+ \quad \| 1_{[0,t_v]} ~ \langle \nabla \rangle^\sigma v \|_{(\SNeta \bigcap \SlN)([0,T])} \leq 2~.
\end{equation*}
To avoid confusion, we point out that the continuity statement \eqref{prelim:eq_continuity} is not enforced solely by the \( \XNT \)-norm, but comes from the definition of the space in \eqref{prelim:eq_spaces}. \\
Without loss of generality, we assume that \( t_v \leq t_w \). Using \eqref{nl:eq_low_high_ww}, \eqref{nl:eq_high_low_ww}, and \eqref{nl:eq_high_high_ww}, we have that 
\begin{align*}
&\Big\|  \duh \big( \theta_{v}(t^\prime) \ |\nabla v|^2-  \theta_{w}(t^\prime) |\nabla w|^2 \big) \dtp\Big\|_{\YN}\\
&\leq \Big \| \duh 1_{[0,t_v]}(t^\prime) (\theta_v(t^\prime)-\theta_w(t^\prime)) |\nabla v|^2 \dtp\Big \|_{\YN}  \\
&+ \Big\| \duh 1_{[0,t_v]}(t^\prime) \theta_w(t^\prime) \left( |\nabla v|^2 - |\nabla w|^2 \right) \dtp\Big \|_{\YN} \\
&+ \Big\| \duh 1_{(t_v,t_w]}(t^\prime) (\theta_v(t^\prime)-\theta_w(t^\prime)) |\nabla w|^2 \dtp \Big\|_{\YN} \\
&\lesssim T^{\frac{1}{2}} \| \theta_v - \theta_w \|_{L_t^\infty} \left( \| 1_{[0,t_v]} \bra^\nu v \|_{\XNeta \bigcap \XlN } + \| 1_{[0,t_v]} \bra^\sigma v \|_{\SNeta \bigcap \SlN} \right)^2\\
 &+ T^{\frac{1}{2}} \bigg( \| 1_{[0,t_v]} \bra^\nu v \|_{\XNeta \bigcap \XlN } + \| 1_{[0,t_v]} \bra^\sigma v \|_{\SNeta \bigcap \SlN}+ \| 1_{[0,t_v]} \bra^\nu w \|_{\XNeta \bigcap \XlN } \\
 &~~~+ \| 1_{[0,t_v]} \bra^\sigma w \|_{\SNeta \bigcap \SlN} \bigg) \| v -w \|_{\YN} \\
&+ T^{\frac{1}{2}} \| \theta_v - \theta_w \|_{L_t^\infty} \left( \| 1_{(t_v,t_w]} \bra^\nu w \|_{\XNeta \bigcap \XlN } + \| 1_{(t_v,t_w]} \bra^\sigma w \|_{\SNeta \bigcap \SlN} \right)^2\\
&\lesssim T^{\frac{1}{2}} \| v -w \|_{\YN}~. 
\end{align*}
Hence, \( \Gamma \) is a contraction on \( {\YN} \), and \( w_n \) can be defined as the unique fixed point of \( \Gamma \). 
\end{proof}

\section{Proof of the main theorem}\label{sec:final}
As in Section \ref{sec:lin}, any question regarding the (strong) measurability of the solutions is addressed in the appendix.
Before we begin with the proof of the main theorem, we collect all conditions on the parameters.
\paragraph{Parameter conditions:} First, we have the basic conditions
\begin{equation}\label{fin:eq_basic_conditions}
\nu > 2 > s > 1~, \quad \sigma= \nu - 1 -, \quad \sigma^\prime > \sigma, \quad \text{and} ~ \quad \gamma \in (0,1)~. 
\end{equation}
In order to use Proposition \ref{lin:prop_probabilistic_strichartz}, Proposition \ref{nl:prop_wn}, and Corollary \ref{nl:cor_inhomogeneous_contribution}, we require the major conditions
\begin{equation}\label{fin:eq_major_conditions}
\begin{aligned}
\sigma^\prime -s + 1 - \gamma (\sigma-1) - \frac{1}{2} (1-\gamma)&<0~,\\
\nu - s - \gamma (\sigma^\prime-1) &<0 ~, \\
(1-\gamma) (\nu-1) + 1- \sigma^\prime &<0~. 
\end{aligned}
\end{equation}
Because of \eqref{lin:eq_consistency_condition_strichartz} and \eqref{lin:eq_probabilistic_strichartz}, we also require the minor conditions
\begin{equation}\label{fin:eq_minor_conditions}
\nu < \frac{5}{2} \quad \text{and} \quad s> \sigma^\prime~. 
\end{equation}
In particular, if \eqref{fin:eq_basic_conditions}, \eqref{fin:eq_major_conditions}, and \eqref{fin:eq_minor_conditions} are satisfied, we can find an \( \eta \) that satisfies the conditions of Proposition \ref{nl:prop_wn}. \\

To complete the proof of the main theorem, we now have to prove the convergence of the iterates \( u_n \), remove the truncation in \eqref{it:eq_truncated_Fn} and \eqref{it:eq_truncated_wn} by choosing a small random time \( T(\omega)>0 \), and optimize the parameters. 

\begin{proof}[\textbf{Proof of the main theorem:}]~\\
Let \( F_n^\omega\), \( w_n\), and \( u_n \) be as in \eqref{it:eq_truncated_Fn}, \eqref{it:eq_truncated_wn}, and \eqref{it:eq_truncated_un}. As before, we have eliminated the subcript \( \theta \) from our notation.
First, we show the convergence of the iterates \( u_n \). Assuming that the parameters satisfy  \eqref{fin:eq_basic_conditions}, \eqref{fin:eq_major_conditions}, and \eqref{fin:eq_minor_conditions}, we prove that there exists a random function \( {u\colon \Omega \times [0,T_0] \times \rthree \rightarrow \mathbb{R} }\) s.t.
\begin{equation}\label{fin:eq_convergence}
\begin{aligned}
u_n \rightarrow u \quad &\text{in} \quad L_\omega ^2 C_t^0 H_x^s(\Omega \times [0,T_0]\times \rthree) ~ \text{and} ~L_\omega^2 L_t^2 W_x^{\sigma,\infty}(\Omega \times [0,T_0]\times \rthree)~,\\
\partial_t u_n \rightarrow \partial_t u \quad &\text{in} \quad  L_\omega ^2 C_t^0 H_x^{s-1}(\Omega \times [0,T_0]\times \rthree) ~. 
\end{aligned}
\end{equation}
Here, \( T_0 > 0 \) is as in Proposition \ref{nl:prop_wn}.

Let \( \epsilon > 0 \) be sufficiently small depending on the parameters above. We show the convergence of the series \( \sum_{m=0}^{\infty} F_m^\omega \) and \( \sum_{m=0}^\infty w_m \) in  \( L_\omega ^2C_t^0 H_x^s \) and \( L_\omega^2 L_t^2 W_x^{\sigma,\infty} \). The convergence of the time-derivatives follows from a similar argument. \\
Let \( 0 \leq n_- < n_+ < \infty \) be arbitrary. Writing \(M=2^m\), we obtain from Minkowski's integral inequality and the definition of \( \XMprime \) that
\begin{align*}
&\Big \| \sum_{m=n_-}^{n_+} \langle \nabla \rangle^s F_m^\omega \Big \|_{L_\omega^2 L_t^\infty L_x^2} 
\lesssim \Big \| \sum_{m=n_-}^{n_+} \langle \nabla \rangle^s P_N F_m^\omega \Big \|_{L_\omega^2 L_t^\infty \ell_N^2 L_x^2}
\lesssim  \Big \| \sum_{m=n_-}^{n_+} \langle \nabla \rangle^s P_N F_m^\omega \Big \|_{L_\omega^2 \ell_N^2 L_t^\infty  L_x^2}\\
&\lesssim \Big \| \sum_{m=n_-}^{n_+} \| \langle \nabla \rangle^s P_N F_m^\omega \|_{L_t^\infty L_x^2} \Big \|_{L_\omega^2 \ell_N^2} 
\lesssim \Big \| \sum_{m=n_-}^{n_+} \max\left( \frac{N}{M}, \frac{M}{N} \right)^{-D^\prime} \| \langle \nabla \rangle^s  F_m^\omega \|_{\XMprime} \Big \|_{L_\omega^2 \ell_N^2}~.
\end{align*}
By using Corollary  \ref{lin:cor_frequency_Fnomega}, it follows that 
\begin{align*}
&\Big \| \sum_{m=n_-}^{n_+} \max\left( \frac{N}{M}, \frac{M}{N} \right)^{-D^\prime} \| \langle \nabla \rangle^s  F_m^\omega \|_{\XMprime} \Big \|_{L_\omega^2 \ell_N^2}\\
&\lesssim  \Big \| \sum_{m=n_-}^{n_+} \max\left( \frac{N}{M}, \frac{M}{N} \right)^{-D^\prime} \| ( \widetilde{P}_M f_0^\omega , \widetilde{P}_M f_1^\omega) \|_{H_x^s\times H_x^{s-1}} \Big \|_{L_\omega^2 \ell_N^2}\\ 
&\lesssim \Big \| \left ( \sum_{m=n_-}^{n_+} \| (\widetilde{P}_M f_0^\omega, \widetilde{P}_M f_1^\omega) \|_{H_x^s\times H_x^{s-1}}^2 \right)^{\frac{1}{2}} \Big \|_{L_\omega^2} 
\lesssim \left(  \sum_{m=n_-}^{n_+} \| (\widetilde{P}_M f_0, \widetilde{P}_M f_1) \|_{H_x^s\times H_x^{s-1}}^2 \right)^{\frac{1}{2}} ~. 
\end{align*}
This proves that the series \( \sum_{m=0}^\infty F_m^\omega \) is Cauchy in \( L_\omega^2 C_t^0 H_x^s \). From Proposition \ref{lin:prop_probabilistic_strichartz}, we have that
\begin{equation}\label{fin:eq_Fm_Strichartz}
\| \langle \nabla \rangle^{\sigma} F_m^\omega \|_{L_\omega^2 L_t^2 L_x^\infty} \lesssim \| \langle \nabla \rangle^{\sigma^\prime} F_m^\omega \|_{L_\omega^2 \SMprime} \lesssim M^{-\epsilon}  \| (\widetilde{P}_M f_0, \widetilde{P}_M f_1) \|_{H_x^s\times H_x^{s-1}}~. 
\end{equation}
This proves the convergence of \( \sum_{m=0}^\infty F_m^\omega \) in \( L_\omega^2 L_t^2 W_x^{\sigma,\infty} \). \\
From Proposition \ref{nl:prop_wn}, we have that 
\begin{equation*}
\| \langle \nabla \rangle^{\nu} w_m \|_{L_t^\infty L_x^2} + \|\langle \nabla \rangle^{\sigma} w_m \|_{L_t^2 L_x^\infty} \lesssim
T^{\frac{1}{2}} M^{-\epsilon} \left( \| \langle \nabla \rangle^{s} F_m^\omega \|_{\XMprime} + \| \langle \nabla \rangle^{\sigma^\prime} F_m^\omega \|_{\SMprime} \right)~. 
\end{equation*}
After taking moments in \( \omega \), the convergence then follows from Corollary \ref{lin:cor_frequency_Fnomega}  and Proposition \ref{lin:prop_probabilistic_strichartz}. \\

Second, we show that there exist random times \( T(\omega) \) s.t. \eqref{intro:eq_u_duhamel} holds. To eliminate the cutoff, it suffices to choose \( T(\omega)>0 \) s.t.
\begin{equation*}
\sum_{m=0}^{\infty} \Big( \| \langle \nabla \rangle^{\sigma^\prime} F_{m}^\omega \|_{\SMprime([0,T(\omega)])} + \| w_m \|_{\YM([0,T(\omega)])} \Big)
\leq 1~. 
\end{equation*}
Using the continuity statement \eqref{prelim:eq_continuity} and the estimate \eqref{fin:eq_Fm_Strichartz}, we have for a.e. \( \omega \in \Omega \) that
\begin{equation*}
t \in [0,T_0] \mapsto \sum_{m=0}^\infty \| \langle \nabla \rangle^{\sigma^\prime} F_m^\omega \|_{\SMprime([0,t])} 
\end{equation*}
is continuous and equals zero at \( t= 0 \). As a consequence, the random time
\begin{equation*}
T_1(\omega) := \sup\bigg\{ 0 \leq t \leq T_0 \colon  \sum_{m=0}^\infty \| \langle \nabla \rangle^{\sigma^\prime} F_m^\omega \|_{\SMprime([0,t])}\leq \frac{1}{2} \bigg\} 
\end{equation*}
is almost surely positive. To control the nonlinear components \( w_m \), we recall from Proposition \ref{nl:prop_wn} that
\begin{equation*}
\| w_m \|_{\YM([0,T])}\lesssim T^{\frac{1}{2}} M^{-\epsilon} \left( \| \langle \nabla \rangle^s F_m^\omega \|_{\XMprime([0,T])}+ \| \langle \nabla \rangle^{\sigma^\prime} F_m^\omega \|_{\SMprime([0,T])}\right)
\end{equation*}
Using Corollary \ref{lin:cor_frequency_Fnomega} and Proposition \ref{lin:prop_probabilistic_strichartz}, we have almost surely that
\begin{equation*}
\sum_{m=0}^\infty M^{-\epsilon}  \left( \| \langle \nabla \rangle^s F_m^\omega \|_{\XMprime([0,T_0])}+ \| \langle \nabla \rangle^{\sigma^\prime} F_m^\omega \|_{\SMprime([0,T_0])}\right) < \infty~.
\end{equation*}
Thus, the random time 
\begin{equation*}
T_2(\omega) := \sup \bigg \{ 0\leq t \leq T_0\colon \sum_{m=0}^\infty \| w_m \|_{\YM} \leq \frac{1}{2} \bigg\}
\end{equation*}
is almost surely positive. Setting \( T(\omega) = \min( T_1(\omega), T_2(\omega) ) \), we obtain for a.e. \( w\in \Omega \) that 
\begin{equation*}
u_n(t)= W(t) (f_0^\omega, f_1^\omega) + \duh |\nabla u_n(t^\prime)|^2 \dtp \qquad  \forall n\geq 0 ~ \text{and} ~\forall t \in [0,T(\omega)]   ~ . 
\end{equation*} 
Then, \eqref{intro:eq_u_duhamel} follows from the convergence of the iterates \( u_n \). \\

Third, we have to determine nearly optimal parameters \( (s,\nu,\sigma^\prime,\gamma) \). We discretized the parameter \( \gamma \in (0,1) \) and used a linear programming solver to find the remaining parameters  \( (s,\nu,\sigma^\prime ) \) with an almost optimal value of \( s \). This leads to 
\begin{equation}\label{fin:eq_parameters}
(s,~ \nu,~ \sigma^\prime,~ \gamma ) = (1.9840,~ 2.1001,~ 1.13205,~ 0.88 )~.
\end{equation}
\end{proof}\vspace{-4ex}

\section{Proof of the bilinear estimates}\label{section:proof_bilinear}

In this section, we present the proofs of Lemma \ref{nl:lem_low_high}, Lemma \ref{nl:lem_high_low}, and Lemma \ref{nl:lem_high_high}.

\begin{proof}[Proof of Lemma \ref{nl:lem_low_high}]~\\
First, we prove the estimates \eqref{nl:eq_low_high_FF} and \eqref{nl:eq_low_high_PFF}. Let \( H \in \{ P_{>N^\gamma} G, G\} \). Then, we for all \( M \geq 1 \) that
\begin{align*}
&\| \bra^\nu P_M \Plh(H,F) \|_{L_t^\infty L_x^2} + \| \bra^{\nu-1} \partial_t P_M \Plh(H,F) \|_{L_t^\infty L_x^2}+ \| \bra^\sigma P_M \Plh(H,F) \|_{L_t^2L_x^\infty} \\
&\lesssim M^{\nu-1} \sum_{1\leq L \ll M} \sum_{K\sim M} \| \nabla P_L H \cdot \nabla P_K F \|_{L_t^1 L_x^2} \\
&\lt M^{\nu-s} \sum_{1\leq L \ll M} L^{1-\sigma^\prime} \| \langle \nabla \rangle^{\sigma^\prime} P_L H \|_{L_t^2 L_x^\infty} \sum_{K\sim M} \| \bra^s P_K F \|_{L_t^\infty L_x^2} ~.
\end{align*}
After multiplying by \( c_{N,D}( M) \) and summing in \( M\), we obtain for all \( D^\prime > 2D \) and \( D>\eta \) that 
\begin{align*}
& \|\Plh(H,F) \|_{\YN} \\
&\lt \sum_{M\geq 1} \sum_{1\leq L \ll M} M^{\nu-s} L^{1-\sigma^\prime} \max \left( \frac{N}{M} , \frac{M}{N} \right)^{-D}  \| \bra^{\sigma^\prime} P_L H \|_{L_t^2 L_x^\infty} \| \bra^s F \|_{\XNprime}~.
\end{align*}

We now distinguish the two different possibilities for \( H \). If \( H= G \), then 
\begin{align*}
&\sum_{M\geq 1} \sum_{1\leq L \ll M} M^{\nu-s} L^{1-\sigma^\prime} \max \left( \frac{N}{M} , \frac{M}{N} \right)^{-D}  \| \bra^{\sigma^\prime} P_L H \|_{L_t^2 L_x^\infty}\\
&\lesssim \sum_{M\geq 1} \sum_{1\leq L \ll M} M^{\nu-s} L^{1-\sigma^\prime} \max \left( \frac{N}{M} , \frac{M}{N} \right)^{-D}  \max \left( \frac{N}{L} , \frac{L}{N} \right)^{-D^\prime} \| \bra^{\sigma^\prime} G \|_{\SNprime} \\
&\lesssim N^{\nu-s+1-\sigma^\prime}  \| \bra^{\sigma^\prime} G \|_{\SNprime} ~. 
\end{align*}
If \( H= P_{>N^\gamma} G \), then 
\begin{align*}
&\sum_{M\geq 1} \sum_{1\leq L \ll M} M^{\nu-s} L^{1-\sigma^\prime} \max \left( \frac{N}{M} , \frac{M}{N} \right)^{-D}  \| \bra^{\sigma^\prime} P_L H \|_{L_t^2 L_x^\infty}\\
&\lesssim \sum_{M\geq N^\gamma} \sum_{N^\gamma\leq L \ll M} M^{\nu-s} L^{1-\sigma^\prime} \max \left( \frac{N}{M} , \frac{M}{N} \right)^{-D}  ~ \| \bra^{\sigma^\prime}  G \|_{\SlNprime}\\
&\lesssim N^{\nu-s+\gamma(1-\sigma^\prime)} \| \bra^{\sigma^\prime}  G \|_{\SlNprime}~. 
\end{align*}
Second, we prove \eqref{nl:eq_low_high_PwF}. For any \( M \geq 1 \), we have that 
\begin{align*}
&~\| \bra^\nu P_M \Plh(P_{>N^\gamma }v,F) \|_{L_t^\infty L_x^2} +\| \bra^{\nu-1} \partial_t P_M \Plh(P_{>N^\gamma }v,F) \|_{L_t^\infty L_x^2} \\
&+ \| \bra^\sigma P_M \Plh(P_{>N^\gamma} v,F) \|_{L_t^2L_x^\infty} \\
&\lesssim M^{\nu-1} \sum_{N^\gamma\leq L \ll M} \sum_{K\sim M} \| \nabla P_L v \cdot \nabla P_K F \|_{L_t^1 L_x^2}\\
&\lt M^{\nu-\sigma^\prime} \sum_{N^\gamma \leq L \ll M} L^{1-\nu} \| \bra^\nu P_L v \|_{L_t^\infty L_x^2} ~ \sum_{K\sim M} \| \bra^{\sigma^\prime} P_K F \|_{L_t^2 L_x^\infty}~.
\end{align*}
After multiplying with \( c_{N,D}(M) \) and summing in \( M \), we obtain for all \( D^\prime> 2D \) and \( D>\eta \) that 
\begin{align*}
&\|  \Plh(P_{>N^\gamma }v,F) \|_{\YN} \\
&\lt \sum_{M\geq 1} \sum_{N^\gamma \leq L \ll M} M^{\nu-\sigma^\prime} L^{1-\nu} \max \left( \frac{N}{M} , \frac{M}{N} \right)^{-D} \| \bra^\nu v \|_{\XlN} \| \bra^{\sigma^\prime} F \|_{\XNprime} \\
&\lt N^{(1-\gamma)(\nu-1)+1-\sigma^\prime}\| \bra^\nu v \|_{\XlN} \| \bra^{\sigma^\prime} F \|_{\XNprime}~.
\end{align*}
Third, we prove \eqref{nl:eq_low_high_Fw} and \eqref{nl:eq_low_high_ww}. For any \( M \geq 1 \), we have that 
\begin{align*}
&\| \bra^\nu  \Plh(G,w) \|_{L_t^\infty L_x^2} +\| \bra^{\nu-1} \partial_t  \Plh(G,w) \|_{L_t^\infty L_x^2}+ \| \bra^\sigma \Plh(G,w) \|_{L_t^2 L_x^\infty} \\
&\lesssim M^{\nu-1} \sum_{1\leq L \ll M} \sum_{K\sim M} \| \nabla P_L G \cdot \nabla  P_K w \|_{L_t^1L_x^2}\\
&\lt  \sum_{L\geq 1} L^{1-\sigma^\prime} \| \bra^{\sigma^\prime} P_L G \|_{L_t^2 L_x^\infty} \sum_{K\sim M} \| \bra^\nu P_K w \|_{L_t^\infty L_x^2} \\
&\lt \| \bra^{\sigma^\prime} G \|_{\SNprime}  \sum_{K\sim M} \| \bra^\nu P_K w \|_{L_t^\infty L_x^2} ~. 
\end{align*}
After multiplying by \( c_{N,\eta}(M)+c_{\leq N,D}(M) \) and summing in \( M \geq 1 \), we obtain \eqref{nl:eq_low_high_Fw}. The estimate \eqref{nl:eq_low_high_ww} follows from exactly the same argument. \\
This finishes the proof of the low-high bilinear estimates.

\end{proof}

\begin{proof}[Proof of Lemma \ref{nl:lem_high_low}]~\\
First, we prove \eqref{nl:eq_high_low_FF}. For any \( M \geq 1 \), we have for all sufficiently large \( D^\prime >0 \) that
\begin{align*}
&\| \langle \nabla \rangle^{\nu} P_M \Phl(G,F) \|_{L_t^\infty L_x^2}+\| \langle \nabla \rangle^{\nu-1} \partial_t P_M \Phl(G,F) \|_{L_t^\infty L_x^2}  + \| \langle \nabla \rangle^{\sigma} P_M \Phl(G,F) \|_{L_t^2 L_x^\infty}\\
&\lesssim M^{\nu-1} \sum_{L\sim M} \sum_{K\ll M} \| \nabla P_L G \cdot \nabla P_K F \|_{L_t^1 L_x^2} \\
&\lt M^{\nu-\sigma^\prime} \max \left( 1, \frac{M}{N} \right)^{-D^\prime} \sum_{K \ll M} K^{1-s} \max \left( \frac{N}{K}, \frac{K}{N} \right)^{-D^\prime} \| \bra^{\sigma^\prime} G \|_{\SlNprime} \| \bra^s F \|_{\XNprime} \\
&\lt M^{\nu-\sigma^\prime} \Big( \Is{M\lesssim N} M^{1-s} \left( \frac{N}{M} \right)^{-D^\prime} + \Is{M\gg N} N^{1-s} \left( \frac{M}{N} \right)^{-D^\prime} \Big)\| \bra^{\sigma^\prime} G \|_{\SlNprime} \| \bra^s F \|_{\XNprime}\\
&\lt N^{\nu-s+1-\sigma^\prime} \max \left( \frac{M}{N}, \frac{N}{M} \right)^{-2D} \| \bra^{\sigma^\prime} G \|_{\SlNprime} \| \bra^s F \|_{\XNprime}~. 
\end{align*}
After multiplying by \( c_{N,D}(M) \) and summing in \( M \geq 1 \), this yields an acceptable contribution. \\
Second, we prove \eqref{nl:eq_high_low_wF}. For any \( M \geq 1 \), we have that
\begin{align*}
&\| \langle \nabla \rangle^{\nu} P_M \Phl(v,F) \|_{L_t^\infty L_x^2}+\| \langle \nabla \rangle^{\nu-1}  P_M \partial_t \Phl(v,F) \|_{L_t^\infty L_x^2}  + \| \langle \nabla \rangle^{\sigma} P_M \Phl(v,F) \|_{L_t^2 L_x^\infty}\\
&\lesssim M^{\nu-1} \sum_{L\sim M} \sum_{K\ll M} \| \nabla P_L v \cdot \nabla P_K F \|_{L_t^1 L_x^2} \\
&\lt  \Big( \sum_{K\ll M} K^{1-\sigma^\prime} \max \left( \frac{N}{K} , \frac{K}{N} \right)^{-D^\prime} \Big) \Big(\sum_{L\sim M} \| \bra^\nu P_L v \|_{L_t^\infty L_x^2} \Big) \| \bra^{\sigma^\prime} F \|_{\SNprime} \\
&\lt \Big( \Is{M\lesssim N} M^{1-\sigma^\prime} \Big( \frac{N}{M} \Big)^{-D^\prime} + \Is{M\gg N} N^{1-\sigma^\prime} \Big)\Big(\sum_{L\sim M} \| \bra^\nu P_L v \|_{L_t^\infty L_x^2} \Big) \| \bra^{\sigma^\prime} F \|_{\SNprime} ~. 
\end{align*}
After multiplying with \( c_{N,D}(M) \) and summing in \( M \geq 1 \), it follows that 
\begin{align*}
&\|  \Phl(v,F) \|_{\YN} \\
&\lt \Big( \sum_{1\leq M\lesssim N} M^{1-\sigma^\prime} \Big( \frac{N}{M} \Big)^{D-D^\prime} \Big)  \| \bra^\nu  v \|_{\XlN} \| \bra^{\sigma^\prime} F \|_{\SNprime}\\
 &+ T^{\frac{1}{2}}N^{1-\sigma^\prime} \sum_{M\gg N} \sum_{L\sim M} c_{N,D}(M)  \| \bra^\nu P_L v \|_{L_t^\infty L_x^2}  \| \bra^{\sigma^\prime} F \|_{\SNprime} \\
 &\lt  N^{1-\sigma^\prime} \| \bra^\nu  v \|_{\XlN} \| \bra^{\sigma^\prime} F \|_{\SNprime}
\end{align*}
Third, we prove \eqref{nl:eq_high_low_Fw}. For any \( M \geq 1 \), it follows from \( \eta < \nu-1 \) that
\begin{align*}
&\| \langle \nabla \rangle^{\nu} P_M \Phl(G,w) \|_{L_t^\infty L_x^2} +\| \langle \nabla \rangle^{\nu-1}  P_M \partial_t \Phl(G,w) \|_{L_t^\infty L_x^2} + \| \langle \nabla \rangle^{\sigma} P_M \Phl(G,w) \|_{L_t^2 L_x^\infty}\\
&\lesssim M^{\nu-1} \sum_{L\sim M} \sum_{K\ll M} \| \nabla P_L G \cdot \nabla P_K w \|_{L_t^1 L_x^2} \\
&\lt M^{\nu-\sigma^\prime} \max\left( 1, \frac{M}{N} \right)^{-D^\prime}  \Big( \sum_{K\ll M} K^{1-\nu} \max\left( \frac{N}{K}, \frac{K}{N} \right)^{-\eta} \Big) \| \bra^{\sigma^\prime} G \|_{\SlNprime} \| \bra^\nu w \|_{\XNeta} \\
&\lt M^{\nu-\sigma^\prime} \max\left( 1, \frac{M}{N} \right)^{-D^\prime} N^{-\eta} \| \bra^{\sigma^\prime} G \|_{\SlNprime} \| \bra^\nu w \|_{\XNeta}~. 
\end{align*}
After multiplying with \( c_{N,\eta}(M) + c_{\leq N,\eta}(M) \) and summing in \( M \geq 1 \), the total contribution is bounded by 
\begin{align*}
&T^{\frac{1}{2}} \Big( \sum_{1\leq M\lesssim N} M^{\nu-\sigma^\prime-\eta} + \sum_{M\gg N} M^{\nu-\sigma^\prime} N^{-\eta} \Big( \frac{M}{N} \Big)^{D-D^\prime} \Big) \| \bra^{\sigma^\prime} G \|_{\SlNprime} \| \bra^\nu w \|_{\XNeta} \\
&\lt  \| \bra^{\sigma^\prime} G \|_{\SlNprime} \| \bra^\nu w \|_{\XNeta}~,
\end{align*}
where we used that \( \eta > \nu - \sigma^\prime \). \\
Finally, we prove \eqref{nl:eq_high_low_ww}, where we argue as in the proof of \eqref{nl:eq_high_low_wF}. For any \( M \geq 1 \), it holds that
\begin{align*}
&\| \langle \nabla \rangle^{\nu} P_M \Phl(v,w) \|_{L_t^\infty L_x^2}+\| \langle \nabla \rangle^{\nu-1} P_M \partial_t\Phl(v,w) \|_{L_t^\infty L_x^2} + \| \langle \nabla \rangle^{\sigma} P_M \Phl(v,w) \|_{L_t^2 L_x^\infty}\\
&\lt  \Big( \sum_{K\ll M} K^{1-\sigma} \max \left( \frac{N}{K} , \frac{K}{N} \right)^{-\eta} \Big) \Big(\sum_{L\sim M} \| \bra^\nu P_L v \|_{L_t^\infty L_x^2} \Big) \| \bra^{\sigma^\prime} w \|_{\SNeta} \\
&\lt \Big( \Is{M\lesssim N} M^{1-\sigma} \Big( \frac{N}{M} \Big)^{-\eta} + \Is{M\gg N} N^{1-\sigma} \Big)\Big(\sum_{L\sim M} \| \bra^\nu P_L v \|_{L_t^\infty L_x^2} \Big) \| \bra^{\sigma} w \|_{\SNeta} ~. 
\end{align*}
In the last line, we used that \( \eta > \sigma-1 \). After multiplying with \( c_{N,\eta}(M)+c_{\leq N,D}(M) \), the total contribution is bounded by
\begin{align*}
&\|  \Phl(v,w) \|_{\YN} \\
&\lt \Big( \sum_{1\leq M\lesssim N} M^{1-\sigma}  \Big)  \| \bra^\nu  v \|_{\XlN} \| \bra^{\sigma^\prime} w \|_{\SNeta}\\
 &+ T^{\frac{1}{2}}N^{1-\sigma} \sum_{M\gg N} \sum_{L\sim M} c_{N,D}(M)  \| \bra^\nu P_L v \|_{L_t^\infty L_x^2}  \| \bra^{\sigma^\prime} w \|_{\SNeta} \\
 & \lt \| \bra^\nu  v \|_{\XlN} \| \bra^{\sigma^\prime} w \|_{\SNeta}~. 
 \end{align*}
 This finishes the proof of the high-low bilinear estimates. 
\end{proof}

\begin{proof}[Proof of Lemma \ref{nl:lem_high_high}]
We begin with the proof of \eqref{nl:eq_high_high_FF}. For any \( M \geq 1 \), we have that 
\begin{align*}
&\| \langle \nabla \rangle^{\nu} P_M \Phh(G,F) \|_{L_t^\infty L_x^2}+\| \langle \nabla \rangle^{\nu-1} \partial_t P_M \Phh(G,F) \|_{L_t^\infty L_x^2}  + \| \langle \nabla \rangle^{\sigma} P_M \Phh(G,F) \|_{L_t^2 L_x^\infty}\\
&\lesssim M^{\nu-1} \sum_{L\sim K \gg M} \| \nabla P_L G\cdot \nabla P_K F \|_{L_t^1L_x^2}\\
&\lt M^{\nu-1} \left( \sum_{L\sim K\gg M} L^{1-\sigma^\prime} K^{1-s} \max \Big( 1,\frac{L}{N} \Big)^{-D^\prime} \max\Big( \frac{N}{K}, \frac{K}{N} \Big)^{-D^\prime} \right) \| \bra^{\sigma^\prime} G \|_{\SlNprime} 
\| \bra^s F \|_{\XNprime}\\
&\lt M^{\nu-1} \left(\Is{M\lesssim N} N^{2-\sigma^\prime-s} + \Is{M\gg N} M^{2-\sigma^\prime-s} \Big( \frac{M}{N} \Big)^{-2D^\prime} \right)\| \bra^{\sigma^\prime} G \|_{\SlNprime} 
\| \bra^s F \|_{\XNprime} \\
&= T^{\frac{1}{2}} N^{\nu-s+1-\sigma^\prime} \left( \Is{M\lesssim N} \Big( \frac{M}{N} \Big)^{\nu-1} + \Is{M\gg N} \Big( \frac{M}{N} \Big)^{-2D^\prime+\nu-s+1-\sigma^\prime} \right) \| \bra^{\sigma^\prime} G \|_{\SlNprime} 
\| \bra^s F \|_{\XNprime}~. 
\end{align*}
Since \( \eta < \nu-1 \), we may multiply by \( c_{N,\eta}(M) + c_{\leq N,D}(M) \) and sum in \( M \geq 1\). \\
Next, we proof \eqref{nl:eq_high_high_wF}. For any \( M \geq 1 \), we have that 
\begin{align*}
&\| \langle \nabla \rangle^{\nu} P_M \Phh(v,F) \|_{L_t^\infty L_x^2}+ \| \langle \nabla \rangle^{\nu-1} P_M \partial_t \Phh(v,F) \|_{L_t^\infty L_x^2}  + \| \langle \nabla \rangle^{\sigma} P_M \Phh(v,F) \|_{L_t^2 L_x^\infty}\\
&\lesssim M^{\nu-1} \sum_{L\sim K \gg M} \| \nabla P_L v\cdot \nabla P_K F \|_{L_t^1L_x^2}\\
&\lt M^{\nu-1} \left( \sum_{L\sim K\gg M} L^{1-\nu} K^{1-\sigma^\prime} \max \Big( 1,\frac{L}{N} \Big)^{-D} \max\Big( \frac{N}{K}, \frac{K}{N} \Big)^{-D^\prime} \right) \| \bra^{\nu} v \|_{\XlN} 
\| \bra^{\sigma^\prime} F \|_{\SNprime}\\
&\lt M^{\nu-1} \left( \Is{M\lesssim N} N^{2-\nu-\sigma^\prime} + \Is{M\gg N} M^{2-\nu-\sigma^\prime} \Big( \frac{M}{N} \Big)^{-D-D^\prime} \right) \| \bra^{\nu} v \|_{\XlN}  \| \bra^{\sigma^\prime} F \|_{\SNprime} \\
&\lt N^{1-\sigma^\prime} \left( \Is{M\lesssim N} \Big( \frac{M}{N} \Big)^{\nu-1} + \Is{M\gg N} \Big( \frac{M}{N} \Big)^{-D-D^\prime+1-\sigma} \right)  \| \bra^{\nu} v \|_{\XlN} \| \bra^{\sigma^\prime} F \|_{\SNprime}~. 
\end{align*}
Since \( \eta < \nu-1 \), we may multiply by \( c_{N,\eta}(M) + c_{\leq N,D}(M) \) and sum in \( M \geq 1\). \\
Finally, we prove \eqref{nl:eq_high_high_Fw} and \eqref{nl:eq_high_high_ww}. For any \( M \geq 1 \), we have that 
\begin{align*}
&\| \langle \nabla \rangle^{\nu} P_M \Phh(G,w) \|_{L_t^\infty L_x^2}+\| \langle \nabla \rangle^{\nu-1} P_M \partial_t \Phh(G,w) \|_{L_t^\infty L_x^2} + \| \langle \nabla \rangle^{\sigma} P_M \Phh(G,w) \|_{L_t^2 L_x^\infty}\\
&\lesssim M^{\nu-1} \sum_{L\sim K \gg M} \| \nabla P_L G\cdot \nabla P_K w \|_{L_t^1L_x^2}\\
&\lt M^{\nu-1} \sum_{L\sim K\gg M} L^{1-\sigma^\prime} K^{1-\nu} \max\Big( 1,\frac{L}{N} \Big)^{-D} \max\Big( \frac{N}{K},\frac{K}{N} \Big)^{-\eta} \| \bra^{\sigma^\prime} G \|_{\SlN} \| \bra^\nu v \|_{\XNeta} \\
&\lt M^{1-\sigma^\prime} \left( \Is{M\lesssim N} \Big( \frac{M}{N} \Big)^{\eta} + \Is{M\gg N} \Big ( \frac{M}{N} \Big)^{-D-\eta} \right) \| \bra^{\sigma^\prime} G \|_{\SlN} \| \bra^\nu v \|_{\XNeta}
\end{align*}
In the evaluation of the sum, we have used that \( \eta < \nu-1 < \nu + \sigma^\prime - 2 \). After multiplying by \( c_{N,\eta}(M)+ c_{\leq N,D}(M) \) and summing in \( M \geq 1 \), we see that 
\begin{equation*}
\| \Phh(G,w) \|_{\YN} \lt    \| \bra^{\sigma^\prime} G \|_{\SlNprime} \| \bra^\nu v \|_{\XNeta}~. 
\end{equation*}
Since we only used \( \sigma^\prime > 1 \), the same argument also yields \eqref{nl:eq_high_high_ww}. This finishes the proof of the high-high bilinear estimates. 
\end{proof}

\appendix
\section{Appendix}
\subsection*{Strong measurability of \( F_n^\omega \) and \( w_n \)}

In this section, we prove the strong measurability of the iterates. As before, let \( (\Omega, \mathscr{F},\mathbb{P} ) \) be the given probability space. We recall the following definition from the theory of Bochner-integration.
\begin{definition}
Let \( E \) be a Banach space. A function \( v\colon \Omega \rightarrow E \)  is called simple if there exist measurable sets \( F_i \in \mathscr{F} \) and vectors \( x_i \in E \), \( i=1,\hdots,k \), such that
\begin{equation*}
v= \sum_{i=1}^k 1_{F_i}(\omega) ~ x_i ~. 
\end{equation*}
A function \( v\colon \Omega \rightarrow E \)  is called strongly measurable (or strongly \( \mathscr{F}\)-measurable) if it can be written as the pointwise limit of simple functions. Finally, a function \( v\colon \Omega \rightarrow E \) is called strongly \( \mathbb{P}\)-measurable if there exists a strongly measurable function \( \widetilde{v} \colon \Omega \rightarrow E \) such that \( v (\omega)= \widetilde{v}(\omega) \) holds \( \mathbb{P}\)-almost surely. 
\end{definition}

The following two properties follow directly from the definition. 
\begin{lem}Let \( E,E_1,E_2, \) and \( F \) be Banach spaces. \label{appendix:lem_properties}
\begin{enumerate}[itemsep=0ex]
\item If \( v\colon \Omega \rightarrow E \)  is strongly measurable and \( \phi\colon E \rightarrow F \) is continuous (but possibly nonlinear),  then the composition \( \phi\circ v\colon \Omega \rightarrow F \) is strongly measurable.
\label{appendix:enum_cont} 
\item If \( v_i \colon \Omega \rightarrow E_i \), \( i=1,2 \), are strongly measurable, then \( (v_1,v_2) \colon \Omega \mapsto E_1 \times E_2 \) is strongly measurable. \label{appendix:enum_prod}
\end{enumerate}\vspace{-4ex}
\end{lem}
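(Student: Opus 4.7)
The plan is to prove both parts directly from the definition of strong measurability, by producing an explicit sequence of simple functions converging pointwise to the target. The statements are essentially bookkeeping once one unpacks the definitions, and no deep measure-theoretic result is required.

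For part \eqref{appendix:enum_cont}, I would start with a sequence of simple functions $v_n \colon \Omega \to E$ such that $v_n(\omega) \to v(\omega)$ in $E$ for every $\omega \in \Omega$; such a sequence exists by the definition of strong measurability of $v$. Write each $v_n$ in the canonical form
\begin{equation*}
v_n(\omega) = \sum_{i=1}^{k_n} 1_{F_i^n}(\omega) \, x_i^n, \qquad F_i^n \in \mathscr{F}, \ x_i^n \in E.
\end{equation*}
Since $v_n$ takes only finitely many values, the composition
\begin{equation*}
(\phi \circ v_n)(\omega) = \sum_{i=1}^{k_n} 1_{F_i^n}(\omega) \, \phi(x_i^n)
\end{equation*}
is itself a simple function $\Omega \to F$. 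Continuity of $\phi$ then gives $(\phi \circ v_n)(\omega) \to (\phi \circ v)(\omega)$ in $F$ for every $\omega \in \Omega$, so $\phi \circ v$ is the pointwise limit of simple functions, hence strongly measurable.

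For part \eqref{appendix:enum_prod}, I would fix sequences of simple functions $v_1^n \to v_1$ and $v_2^n \to v_2$ pointwise, say
\begin{equation*}
v_1^n(\omega) = \sum_{i=1}^{k_n} 1_{F_i^n}(\omega) \, x_i^n, \qquad v_2^n(\omega) = \sum_{j=1}^{\ell_n} 1_{G_j^n}(\omega) \, y_j^n.
\end{equation*}
Taking the common refinement $\{F_i^n \cap G_j^n\}_{i,j}$, which is again a finite collection of measurable sets, the pair $(v_1^n, v_2^n)$ can be written as
\begin{equation*}
(v_1^n, v_2^n)(\omega) = \sum_{i,j} 1_{F_i^n \cap G_j^n}(\omega) \, (x_i^n, y_j^n),
\end{equation*}
which is a simple function into $E_1 \times E_2$ once this product is equipped with any of the usual equivalent norms (e.g.\ $\|(e_1, e_2)\| := \|e_1\|_{E_1} + \|e_2\|_{E_2}$). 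Since convergence in this norm is componentwise, the pointwise convergences $v_i^n(\omega) \to v_i(\omega)$ in $E_i$ combine to give $(v_1^n, v_2^n)(\omega) \to (v_1, v_2)(\omega)$ in $E_1 \times E_2$ for every $\omega$, establishing strong measurability of the pair.

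There is no genuine obstacle in this proof; the only minor care needed is to choose a product norm on $E_1 \times E_2$ and to note that the refinement step preserves measurability of the level sets. Both parts will be stated as short direct arguments in the appendix.
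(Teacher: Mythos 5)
Your proof is correct and is exactly the direct verification from the definition that the paper has in mind — the paper simply asserts that both properties "follow directly from the definition" and omits the details, which your argument supplies faithfully (simple functions push forward under $\phi$ to simple functions, and common refinements give simple approximants into the product).
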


We are now ready to prove the main proposition of this section. Recall the definition of the sigma-algebra \( \mathscr{F}_n := \sigma( g_l \colon \| l\|_2 < 2^n) \), where \( n\in \mathbb{N}_0 \).
\begin{prop}\label{appendix:prop_measurability}
Let \( F_n^\omega\), \( F_{n,k} \), and \( w_n\) be as in \eqref{it:eq_truncated_Fn}, \eqref{it:eq_truncated_Fnk}, and \eqref{it:eq_truncated_wn}. Furthermore, let 0 \( \leq T \leq T_0 \) be as in Theorem \ref{main_thm}. Then, we have for all \( n \geq 0 \) that 
\begin{enumerate}[itemsep=0ex]
\item the functions \( \omega \mapsto \bra^s F_n^\omega \in \XNprime  \), \( \omega \mapsto \bra^{s-1} \partial_t F_n^\omega \in \XNprime \), and \( \omega \mapsto \bra^{\sigma^\prime} F_n^\omega \in \SNprime \) are strongly \( \mathscr{F}_n \)-measurable, \label{appendix:enum_Fn} 
\item the functions \( \omega \mapsto \nabla F_{n,k} \in C_t^0 \Bkprime \),   \( \omega \mapsto \partial_t F_{n,k} \in C_t^0 \Bkprime \), and  \( \omega \mapsto  F_{n,k} \in C_t^0 \Bkprime \)  are strongly \( \mathscr{F}_{n-1}\)- measurable, \label{appendix:enum_Fnk}
\item the function \( \omega \mapsto w_n \in \YN \) is strongly \( \mathscr{F}_n \)-measurable. \label{appendix:enum_wn}
\end{enumerate}
Furthermore, let \( u \) be the solution from Theorem \ref{main_thm}. Then, the maps \( \omega \mapsto u\in C_t^0 H_x^s \medcap L_t^2 W_x^{\sigma,\infty} \) and \( \omega \mapsto \partial_t u \in C_t^0 H_x^{s-1} \) are strongly \( \mathbb{P}\)-measurable. 
\end{prop}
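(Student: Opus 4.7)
The plan is to prove parts \eqref{appendix:enum_Fn}--\eqref{appendix:enum_wn} simultaneously by induction on $n$, and then deduce the claim about $u$ by passing to a limit. Throughout, I will lean on Lemma \ref{appendix:lem_properties} together with the elementary fact that a pointwise (in $\omega$) limit of strongly measurable functions taking values in a Banach space is itself strongly measurable.

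For the base case $n=0$, note that $u_{-1}=0$ and the sum in the definition of $\theta_{F,w;\leq -1}$ is empty (so the cutoff equals $1$), whence
\begin{equation*}
F_0^\omega(t) \;=\; g_0(\omega)\, W(t)(P_0 f_0, P_0 f_1)
\end{equation*}
is the product of an $\mathscr{F}_0$-measurable scalar and a deterministic element of $\XNprime\cap\SNprime$, while $F_{0,0}$ is purely deterministic. In the inductive step, assuming \eqref{appendix:enum_Fn}--\eqref{appendix:enum_wn} for all $m\leq n-1$, the sum $u_{n-1}=F_{\leq n-1}^\omega+w_{\leq n-1}$ is strongly $\mathscr{F}_{n-1}$-measurable in all the norms appearing in the cutoff, so by part \eqref{appendix:enum_cont} of Lemma \ref{appendix:lem_properties} the cutoff $\theta_{F,w;\leq n-1}(s)$ is a strongly $\mathscr{F}_{n-1}$-measurable function of the parameter $s$.

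To handle \eqref{appendix:enum_Fnk}, I would realize $F_{n,k}$ as the pointwise (in $\omega$) limit in $C_t^0\Bkprime$ of the Picard iterates $F_{n,k}^{(j)}$ for the linear equation \eqref{it:eq_truncated_Fnk}. The energy bound implicit in the proof of Proposition \ref{lin:prop_freq_envelope} already supplies the contraction estimate needed to run this scheme on a short enough time interval; partitioning $[0,T_0]$ into finitely many such subintervals then gives convergence in the full Banach space. Each iterate $F_{n,k}^{(j)}$ is strongly $\mathscr{F}_{n-1}$-measurable, since the map sending the pair $(F_{n,k}^{(j-1)}, \theta_{F,w;\leq n-1}\nabla P_{\leq N^\gamma}u_{n-1})$ to $F_{n,k}^{(j)}$ is continuous in the relevant norms, and part \eqref{appendix:enum_prod} of Lemma \ref{appendix:lem_properties} bundles the two inputs together. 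Part \eqref{appendix:enum_Fn} then follows from the finite-sum identity $F_n^\omega=\sum_{N/2\leq\|k\|_2<N}g_k F_{n,k}$, because multiplication by $g_k$ and finite summation both preserve strong measurability, and the $g_k$ with $N/2\leq\|k\|_2<N$ are $\mathscr{F}_n$-measurable.

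For \eqref{appendix:enum_wn}, I apply the same Picard strategy to the map $\Gamma$ from the proof of Proposition \ref{nl:prop_wn}: starting with $w_n^{(0)}\equiv 0$ and setting $w_n^{(j)}:=\Gamma w_n^{(j-1)}$, each iterate is strongly $\mathscr{F}_n$-measurable (all of $F_n^\omega$, $u_{n-1}$ and the cutoffs are strongly $\mathscr{F}_n$-measurable by the previous steps, and $\Gamma$ is continuous from $\YN$ to $\YN$), and the contraction estimate forces $w_n^{(j)}\to w_n$ pointwise in $\omega$ in the $\YN$-norm, so $w_n$ inherits strong $\mathscr{F}_n$-measurability. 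Finally, the convergence \eqref{fin:eq_convergence} together with the strong $\mathscr{F}_n$-measurability of $u_n=u_{n-1}+F_n^\omega+w_n$ expresses $u$ and $\partial_t u$ as $\mathbb{P}$-almost sure pointwise limits of strongly measurable functions, giving strong $\mathbb{P}$-measurability. The main obstacle is the first nontrivial step: ensuring that the Picard scheme for $F_{n,k}$ converges in the Banach topology of $C_t^0\Bkprime$ rather than merely pointwise in $(t,x)$, so that strong measurability genuinely transfers to the limit; once this point is secured, the remainder is a routine application of Lemma \ref{appendix:lem_properties}.
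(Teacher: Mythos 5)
Your proof is correct and follows essentially the same route as the paper's: induction on \( n \), measurability of the cutoffs, and transfer of strong measurability through the contraction-mapping constructions of \( F_{n,k} \) and \( w_n \), concluding with the a.s.\ limit for \( u \). The only cosmetic difference is that you realize the solutions as limits of Picard iterates (so measurability passes to the limit), whereas the paper invokes the continuity of the solution maps in their inputs---itself a consequence of the same contraction estimates---together with Lemma \ref{appendix:lem_properties}.\ref{appendix:enum_cont}; both versions rest on identical analytic input, and the obstacle you flag (convergence of the iterates in the Banach topology of \( C_t^0 \Bkprime \)) is exactly what the contraction estimate from the proof of Proposition \ref{lin:prop_freq_envelope} supplies.
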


Before we prove the proposition, we need the following lemma which proves the measurability of the cutoff. 

\begin{lem}\label{appendix:lem_cutoff}
If \( \omega \in \Omega \mapsto v^\omega \in \XN([0,T]) \) is strongly \( \mathscr{F}_n \)-measurable, then the map 
\begin{equation*}
(\omega,\tau) \in \Omega \times [0,T] \rightarrow \| v^\omega \|_{\XN([0,\tau])} \in \mathbb{R}_{\geq 0} 
\end{equation*}
is measurable with respect to the product sigma-algebra \( \mathscr{F}_n \bigotimes \mathscr{B}([0,T]) \). Here, \( \mathscr{B}([0,T]) \) denotes the Borel sigma-algebra. \\
An analogous statement also holds for \( \XlN([0,T]), ~ \SN([0,T]), ~ \text{and} ~ \SlN([0,T]) \). 
\end{lem}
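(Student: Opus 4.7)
My plan is to exhibit $(\omega,\tau) \mapsto \|v^\omega\|_{\XN([0,\tau])}$ as a Carathéodory function --- measurable in $\omega$ for each fixed $\tau$, and continuous in $\tau$ for each fixed $\omega$ --- and then deduce joint $\mathscr{F}_n \otimes \mathscr{B}([0,T])$-measurability by a dyadic approximation in the $\tau$-variable. For the $\omega$-slice, I would fix $\tau \in [0,T]$ and observe that the pointwise bound $\|P_M u\|_{L_t^\infty L_x^2([0,\tau]\times \rthree)} \leq \|P_M u\|_{L_t^\infty L_x^2([0,T]\times \rthree)}$, combined with a term-by-term comparison of the defining series, shows that $u \mapsto \|u\|_{\XN([0,\tau])}$ is 1-Lipschitz (hence continuous) as a map $\XN([0,T]) \to \mathbb{R}_{\geq 0}$. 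Combining this with Lemma~\ref{appendix:lem_properties}(\ref{appendix:enum_cont}) and the assumed strong $\mathscr{F}_n$-measurability of $\omega \mapsto v^\omega$, the composition $\omega \mapsto \|v^\omega\|_{\XN([0,\tau])}$ is $\mathscr{F}_n$-measurable.

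For the $\tau$-slice, the continuity statement \eqref{prelim:eq_continuity} of Lemma~\ref{prelim:lem_basic_properties} says precisely that, for each fixed $\omega$, the map $\tau \mapsto \|v^\omega\|_{\XN([0,\tau])}$ is continuous on $[0,T]$. To assemble the two one-variable regularity properties into a joint measurability statement, I would use a standard Carathéodory approximation. For $k\geq 1$ and $0\leq j<2^k$ set $\tau_{k,j} := jT/2^k$ and define
\begin{equation*}
f_k(\omega,\tau) := \sum_{j=0}^{2^k-1} 1_{[\tau_{k,j},\tau_{k,j+1})}(\tau)\,\|v^\omega\|_{\XN([0,\tau_{k,j}])} \,+\, 1_{\{T\}}(\tau)\,\|v^\omega\|_{\XN([0,T])}.
\end{equation*}
Each $f_k$ is a finite linear combination of products of a Borel function of $\tau$ with an $\mathscr{F}_n$-measurable function of $\omega$, hence jointly $\mathscr{F}_n \otimes \mathscr{B}([0,T])$-measurable. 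Continuity in $\tau$ gives $f_k(\omega,\tau) \to \|v^\omega\|_{\XN([0,\tau])}$ for every $(\omega,\tau)$, and the pointwise limit of jointly measurable functions is jointly measurable.

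I do not anticipate a genuine obstacle here: the only analytic input beyond the definitions is the continuity \eqref{prelim:eq_continuity}, which is already established in Lemma~\ref{prelim:lem_basic_properties}. The extensions to $\XlN([0,T])$, $\SN([0,T])$, and $\SlN([0,T])$ require no new idea, since the restriction map is 1-Lipschitz in exactly the same way (with $L_t^\infty L_x^2$ replaced by $L_t^2 L_x^\infty$ in the Strichartz-type cases, and the weight $c_{N,D}$ replaced by $c_{\leq N,D}$ where appropriate), and the analogue of \eqref{prelim:eq_continuity} holds with an identical proof.
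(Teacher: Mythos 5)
Your proof is correct, but it discretizes the opposite variable from the paper. The paper's proof uses the strong $\mathscr{F}_n$-measurability hypothesis head-on: it approximates $v^\omega$ by simple functions $\sum_i 1_{F_i}(\omega)\, v_i$ in $\XN([0,T])$, for which $\|v^\omega\|_{\XN([0,\tau])} = \sum_i 1_{F_i}(\omega)\,\|v_i\|_{\XN([0,\tau])}$ is manifestly a finite sum of (indicator in $\omega$)$\times$(continuous in $\tau$) terms, and then invokes the continuity statement \eqref{prelim:eq_continuity}. You instead run the textbook Carath\'eodory argument: measurability in $\omega$ for each fixed $\tau$ (via the $1$-Lipschitz restriction map and Lemma \ref{appendix:lem_properties}), continuity in $\tau$ for each fixed $\omega$ (again \eqref{prelim:eq_continuity}), and a dyadic step-function approximation in $\tau$. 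Both routes hinge on the same key input \eqref{prelim:eq_continuity} and are equally elementary; yours has the minor advantage of making explicit the $1$-Lipschitz bound $\bigl| \|u\|_{\XN([0,\tau])}-\|w\|_{\XN([0,\tau])}\bigr| \leq \|u-w\|_{\XN([0,T])}$, which is also what justifies (implicitly, in the paper's version) passing from the simple-function case to the general pointwise limit. No gap in either direction; the extension to the other three spaces is identical in both arguments.
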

\begin{proof}
Since \( v^\omega \) is strongly \( \mathscr{F}_n \)-measurable, it suffices to prove the statement for simple functions. Thus, we may assume that there exists pairwise disjoint measurable sets \( F_i \in \mathscr{F}_n \) and (deterministic) functions \( v_i \in \XN([0,T]) \), \( i=1,\hdots, k \), such that 
\begin{equation*}
v^\omega  = \sum_{i=1}^k 1_{F_i}(\omega) v_i ~. 
\end{equation*} 
It follows that \begin{equation*}
\| v^\omega \|_{\XN([0,\tau])} = \sum_{i=1}^k 1_{F_i}(\omega) \| v_i \|_{\XN([0,\tau])}~. 
\end{equation*}
Thus, Lemma \ref{appendix:lem_cutoff}  follows from the continuity statement \eqref{prelim:eq_continuity}.
\end{proof}

\begin{proof}[Proof of Proposition \ref{appendix:prop_measurability}]
We prove the proposition by induction on \( n\). Since the base case \( n=0 \) and induction step follow from the same argument, we may assume directly that \ref{appendix:enum_Fn}-\ref{appendix:enum_wn} hold for all \( {m=0,\hdots, n-1} \). \\
Due to \ref{appendix:enum_Fn}, \ref{appendix:enum_wn}, and Lemma \ref{appendix:lem_properties}.\ref{appendix:enum_cont}, we see that \( \omega \mapsto  u_{n-1} \in L_t^2 W_x^{\sigma,\infty} \) is strongly \(\mathscr{F}_{n-1} \)-measurable. Similarly, using \ref{appendix:enum_Fn}, \ref{appendix:enum_wn}, and Lemma \ref{appendix:lem_cutoff}, we obtain the measurability of the cutoff \( \theta_{F,w;\leq n-1} \). Since the proof of  Proposition \ref{lin:prop_freq_envelope} leads to a contraction mapping argument, we see that the solution \( F_k \) of \eqref{lin:eq_Fk} depends continuously on \( \phi \in L_t^2 W_x^{\sigma,\infty} \). Therefore, we obtain \ref{appendix:enum_Fnk} from Lemma \ref{appendix:lem_properties}.\ref{appendix:enum_cont}. For any sufficiently large \( D^{\prime\prime} >0 \), we have the continuous embeddings \( C_t^0 \Bkprime \hookrightarrow \XNprime \) and \( C_t^0 \Bkprime \hookrightarrow \SNprime\), where the norm of the embedding may depend on \( N \). Since \begin{equation*}
F_n^\omega = \sum_{N/2 \leq \| k\|_2 < N} g_k(\omega) F_{n,k}~,
\end{equation*}
this proves \ref{appendix:enum_Fn}.  Since the proof of Proposition \ref{nl:prop_wn} consists of a contraction mapping argument, \( w_n \in \YN \) depends continuously on \( F_m^\omega\), \( w_m \), where \( m=0,\hdots, n-1\), and \( F_n^\omega\), all in their respective norms. Thus, \ref{appendix:enum_wn} follows from \ref{appendix:enum_Fn} with \( m=0,\hdots,n \), \ref{appendix:enum_wn} with \( m=0,\hdots,n-1\),  Lemma \ref{appendix:lem_properties}, and Lemma \ref{appendix:lem_cutoff}. \\
Finally, the strong \( \mathbb{P} \)-measurability of \( u \) follows from the convergence of the iterates, see \eqref{fin:eq_convergence}.
\end{proof}

\bibliographystyle{hplain}
\bibliography{Library_Wiki}

\Addresses

\end{document}